\theoremstyle{plain}
\newtheorem{thm}{Theorem}[section]
\newtheorem{lem}[thm]{Lemma}
\newtheorem{cor}[thm]{Corollary}
\newtheorem{prop}[thm]{Proposition}
\theoremstyle{definition}
\newtheorem{defn}[thm]{Definition}
\newtheorem{rem}[thm]{Remark}
\title[Solvability of elliptic systems]
{Solvability of elliptic systems with square integrable boundary data}
\author{Pascal Auscher} \author{Andreas Axelsson} \author{Alan McIntosh}
\address{Pascal Auscher, Universit\'e de Paris-Sud, UMR du CNRS 8628, 91405 Orsay Cedex, France}
\email{pascal.auscher@math.u-psud.fr}
\address{Andreas Axelsson, Matematiska institutionen, Stockholms universitet, 106 91 Stockholm, Sweden}
\email{andax@math.su.se}
\address{Alan McIntosh, Centre for Mathematics and its Applications, Mathematical Sciences Institute, Australian National University, Canberra ACT 0200, Australia}
\email{Alan.McIntosh@maths.anu.edu.au}
\mathchardef\semic="303B
\newcommand{\wedg}{\mathbin{\scriptstyle{\wedge}}}
\newcommand{\lctr}{\mathbin{\lrcorner}}
\newcommand{\R}{{\mathbf R}}
\newcommand{\C}{{\mathbf C}}
\newcommand{\mH}{{\mathcal H}}
\newcommand{\mK}{{\mathcal K}}
\newcommand{\mX}{{\mathcal X}}
\newcommand{\mL}{{\mathcal L}}
\DeclareMathOperator{\re}{Re}
\newcommand{\im}{\text{{\rm Im}}\,}
\newcommand{\sett}[2]{ \{ #1 \, \semic \, #2 \} }
\newcommand{\dist}{\text{{\rm dist}}\,}
\newcommand{\nul}{\textsf{N}}
\newcommand{\ran}{\textsf{R}}
\newcommand{\dom}{\textsf{D}}
\newcommand{\clos}[1]{\overline{#1}}
\newcommand{\conj}[1]{\overline{#1}}
\newcommand{\sgn}{\text{{\rm sgn}}}
\newcommand{\barint}{\mbox{$ave \int$}}
\newcommand{\divv}{{\text{{\rm div}}}}
\newcommand{\curl}{{\text{{\rm curl}}}}
\newcommand{\tb}[1]{\| \hspace{-0.42mm} | #1 \| \hspace{-0.42mm} |}
\newcommand{\wt}{\widetilde}
\newcommand{\ta}{{\scriptscriptstyle \parallel}}
\newcommand{\no}{{\scriptscriptstyle\perp}}
\newcommand{\pd}{\partial}
\newcommand{\oA}{{\overline A}}
\newcommand{\uA}{{\underline A}}
\newcommand{\oB}{{\overline B}}
\newcommand{\uB}{{\underline B}}
\def\barint_#1{\mathchoice
            {\mathop{\vrule width 6pt
height 3 pt depth -2.5pt
                    \kern -8.8pt
\intop}\nolimits_{#1}}%
            {\mathop{\vrule width 5pt height
3 pt depth -2.6pt
                    \kern -6.5pt
\intop}\nolimits_{#1}}%
            {\mathop{\vrule width 5pt height
3 pt depth -2.6pt
                    \kern -6pt
\intop}\nolimits_{#1}}%
            {\mathop{\vrule width 5pt height
3 pt depth -2.6pt
          \kern -6pt \intop}\nolimits_{#1}}}
\begin{document}

\begin{abstract}
We consider second order elliptic divergence form systems with complex measurable coefficients $A$
that are independent of the transversal coordinate, and prove that the set of 
$A$ for which the boundary value problem with
$L_2$ Dirichlet or Neumann data is well posed, is an open set.
Furthermore we prove that these boundary value problems are well posed when $A$
is either Hermitean, block or constant.
Our methods apply to more general systems of PDEs and as an example we prove perturbation
results for boundary value problems for differential forms.
\end{abstract}
\maketitle

\subjclass{MSC classes: 35J25, 35J55, 47N20}

\section{Introduction}

We first review the situation for scalar equations.
Consider the divergence form second order elliptic equation
\begin{equation}  \label{eq:divformscalar}
  \divv_{t,x} A(x) \nabla_{t,x}U(t,x)= \sum_{i,j=0}^n \pd_i A_{i,j}(x) \pd_j U(t,x) =0,
\end{equation}
on the upper half space $\R^{1+n}_+ := \sett{(t,x)\in\R\times \R^n}{t>0}$, $n\ge 1$,
where the matrix $A=(A_{i,j}(x))_{i,j=0}^n\in L_\infty(\R^n;\mL(\C^{1+n}))$ 
is assumed to be $t$-independent and strictly accretive with complex coefficients.
In this generality, when no regularity is assumed of the coefficients, the natural conditions
to impose on $U$ at the boundary $\R^n$ are one of the following.
\begin{itemize}
\item 
Dirichlet problem (Dir-$A$):  $U(0,x)=u(x)$ for a given function $u(x)$.
\item
Neumann problem (Neu-$A$): $-\sum_{j}A_{0,j}(x)\pd_j U(0,x)=\phi(x)$, 
where $\phi(x)$ is given.
\item Dirichlet regularity problem (Reg-$A$): $\pd_i U(0,x) = \pd_i u(x)$, $1\le i\le n$, where
$u(x)$ is given.
\end{itemize}
In this paper, we consider these boundary value problems (BVPs) in $L_2(\R^n)$, i.e.~the boundary data are $u\in L_2(\R^n)$, $\phi\in L_2(\R^n)$ and $u\in\dot H^1(\R^n)$ respectively, and
for well posedness a unique function $U(t,x)$ with certain $L_2$ estimates is required.
Detailed definitions are given in Section~\ref{sec:notres}.

These BVPs arise naturally when considering BVPs for the
Laplace equation on a Lipschitz domain $\Omega$ in $\R^n$. 
As the main problem here is a local one, the result for such domains can be derived
from the scale invariant case of a Lipschitz graph domain, i.e.~we assume that
$\Omega=\sett{(t,x)}{t>g(x)}$ is the domain above the graph of some Lipschitz function $g$.
Through a change of variables $U(t,x):= V(t+g(x), x)$, an harmonic function $V$ in $\Omega$
corresponds to $U$ in $\R^{n+1}_+$ satisfying (\ref{eq:divformscalar})
with coefficients $A=[1+|\nabla_x g|^2, -\nabla_xg^t;-\nabla_x g, I]$, and the respective
boundary conditions carry over from $\partial \Omega$ to $\R^n$.
The coefficents appearing from this pullback technique are referred to as being of {\em Jacobian type}, 
and are in particular real and symmetric, as well as independent of the transversal coordinate $t$. 
In this case, solvability of the Dirichlet problem was first proved by Dahlberg~\cite{D}, 
and solvability of the Neumann and regularity problems was first proved by
Jerison and Kenig~\cite{JK2}. Later Verchota~\cite{V} showed 
that these BVPs are solvable with the layer potential integral equation method.
For general real symmetric matrices $A$, not being of the Jacobian type, 
well posedness of the Dirichlet problem was first proved by 
Jerison and Kenig~\cite{JK1}, and the Neumann and regularity problems
by Kenig and Pipher~\cite{KP}.

It is natural to ask whether the BVPs for the fundamental elliptic equation  (\ref{eq:divformscalar})
are well posed for more general coefficients. Obvious generalizations are coefficients $A(t,x)$
with $t$-dependence, as well as more general non-symmetric or complex coefficient matrices.
In both cases, it is known that well posedness does not hold in general. 
Caffarelli, Fabes and Kenig~\cite{CaFK} observed that some regularity in the $t$-coordinate
is necessary for well posedness, and Kenig, Koch, Pipher and Toro~\cite{KKPT} and
Kenig and Rule~\cite{KR} gave examples where well posedness fail in any $L_p$ for sufficiently non-symmetric,
but $t$-independent real coefficients in the plane which are discontinuous at $x=0$.
However, on the positive side they show that, for given real non-symmetric coefficients in the plane,
the Dirichlet problem is well posed in $L_p$ for sufficiently large $p$, whereas 
the Neumann and regularity problems are well posed for $p$ sufficiently close to $1$.

In this paper, we consider only $t$-independent coefficients, but allow on the other hand 
arbitrary complex, strictly accretive coefficients $A\in L_\infty(\R^n;\mL(\C^{1+n}))$.
As remarked, well posedness does not hold in general for the BVPs in this case.
But our main result Theorem~\ref{thm:bvpfordivform} shows that the sets of well posedness
\begin{equation}   \label{eq:setsofwp}
  WP(X) := \sett{A}{\text{(X-$A$) is well posed in } L_2(\R^n)} \subset L_\infty(\R^n;\mL(\C^{1+n})),
\end{equation}
where $X$ denotes one of the three BVPs Dir, Neu or Reg,
are all open sets.
As discussed above, the sets of well posedness contain all real symmetric coefficients.
Our theorem thus in particular shows well posedness for small complex perturbations of real symmetric
coefficients.
This has also been proved in \cite{AAAHK} by Alfonseca, Auscher, Axelsson, 
Hofmann and Kim, with other methods using layer potential operators, and in
\cite{AAH} by Auscher, Axelsson and Hofmann.
In fact, our methods here give the new result that well posedness holds for
complex Hermitean matrices and their perturbations.

One may ask what is the motivation for considering complex coefficients. 
However interesting it may be to know well posedness for complex matrices, a main
motivation is that this feeds back to give perturbation estimates for real matrices.
In fact, to show that the solution $U$ varies continuously, for fixed boundary data,
as $A(x)$ varies continuously in $L_\infty$ within the subspace of 
real symmetric matrices, there is no known method which does not use bounds
for complex BVPs. 
The observation being used is that bounds for complex BVPs imply analytic dependence
on $A$ and in particular Lipschitz regularity with respect to $A\in L_\infty$.

Turning to other consequences of Theorem~\ref{thm:bvpfordivform},
well posedness is well known to hold for all constant coefficients $A(x)=A$, and
our theorem thus yields well posedness for perturbations here as well.
The Dirichlet problem was first shown to be well posed for small perturbations of constant matrices,
by Fabes, Jerison and Kenig~\cite{FJK}, using the method of multilinear 
expansions.
The Neumann and regularity problems are tackled in \cite{AAAHK} and \cite{AAH}.

It is also known that (Neu-$A$) and (Reg-$A$) are well posed in $L_2$ for complex matrices 
of block form, i.e.~such that $A_{0,i}=0=A_{i,0}$ for all $1\le i\le n$. This is a non-trivial result
and is in fact equivalent to the Kato square root problem, proved 
by Auscher, Hofmann, Lacey, McIntosh and Tchamitchian~\cite{AHLMcT}.
Our theorem thus yields well posedness for small perturbations of complex block
form matrices, and is in this sense a generalization of the Kato square root estimate.
With the further assumption of pointwise resolvent kernel bounds, this result
is also implicit in \cite{AAAHK}.  
However, our methods in this paper require no such pointwise estimates.

Let us now discuss the methods underlying Theorem~\ref{thm:bvpfordivform}.
For the proof we use, following \cite{AAH}, boundary equation methods
involving a Cauchy operator $E_A$.
The name Cauchy operator is used since $E_A$ coincides with the Cauchy singular integral operator 
when $A=I$ and $n=1$.
The first step of the proof is to rewrite the second order equation (\ref{eq:divformscalar}) as the 
equivalent first order system
\begin{align} \label{eq:Laplacein1order}
\begin{cases}   
  \divv_{t,x} A(x) F(t,x)  =0, \\
  \curl_{t,x} F(t,x) =0,
\end{cases}
\end{align} 
taking the gradient vector field $F(t,x):= \nabla_{t,x}U(t,x)$ as the unknown function instead
of the potential $U$.
The Cauchy operator $E_A$ is related to (\ref{eq:Laplacein1order}) in the same way that 
the classical Cauchy integral operator is related to the Cauchy--Riemann equations.
Just as the Cauchy singular integral operator is a Fourier multiplier with symbol $\sgn(\xi)$, that is,  belongs
to the functional calculus of $d/dx$, the
operator $E_A$ belongs to the functional calculus of
a first order differential operator $T_A$.
The bisectorial operator $-T_A$ in $L_2(\R^n;\C^{1+n})$ is the infinitesimal generator for the system 
(\ref{eq:Laplacein1order}) in the sense that these equations are equivalent to
$\pd_t F+ T_A F=0$.
The fundamental problem is to prove that this operator $T_A$ has a bounded holomorphic 
functional calculus, and as a consequence that the Cauchy operator $E_A$ is bounded.
Given this, the perturbation results for BVPs follow as a consequence.

In \cite{AAH}, it was proved that $\|E_A\|<\infty$
when $\|A-A_0\|_\infty<\epsilon$ and $A_0$ is either real symmetric, block or constant.
This paper made use of a rather lengthy perturbation argument, and also used 
square function estimates of Dahlberg, Jerison and Kenig~\cite{DJK} and estimates of harmonic measure
of Jerison and Kenig~\cite{JK1}, for 
solutions to (\ref{eq:divformscalar}) in the real symmetric case.

In this paper we prove the boundedness of the holomorphic functional calculus of $T_A$,
for all complex $A$, directly from the quadratic estimates proved by Axelsson, Keith and McIntosh
\cite{AKMc}. In this way, our results build on the proof of the Kato square root problem \cite{AHLMcT}.
That $\|E_A\|<\infty$ for all complex $A$ may come as a surprise, in view of the above mentioned
counter-examples to well posedness of the BVPs for non-symmetric coefficients.
However, it is important to note that $E_A$ itself has nothing to do with BVPs, it is an
infinitesimal generator associated with the differential equation, and is not related
to the boundary conditions (except in the case  of block form matrices).
As a consequence of the boundedness of $E_A=\sgn(T_A)$, we prove in Theorem~\ref{thm:mainhardy}
that there is a Hardy type splitting
$$
   L_2(\R^n;\C^{1+n}) \ni f = F^+|_{\R^n} + F^-|_{\R^n}
$$
of boundary functions $f$ into traces of $F^\pm$ satisfying 
(\ref{eq:Laplacein1order}) in $\R^{1+n}_\pm$, with estimates 
$\|f\|_2 \approx \|F^+|_{\R^n}\|_2 + \|F^-|_{\R^n}\|_2$.
That a BVP is well posed is the question whether the full traces $F^+|_{\R^n}$
of solutions to the equations in $\R^{n+1}_+$ are in one-to-one correpondence
with the normal components (Neumann problem) or tangential parts (Dirichlet regularity problem)
respectively.
It is this one-to-one correspondence which may fail for some complex $A$.
What we prove here and use for the proof of Theorem~\ref{thm:bvpfordivform}, is that
the Hardy subspaces $\{F^+|_{\R^n}\}$, and the projections $f\mapsto F^+|_{\R^n}$ onto them,
depend analytically on $A$.

Finally we note that the methods developed here go beyond scalar elliptic
equations like (\ref{eq:divformscalar}). 
The natural framework here is rather BVPs for elliptic systems of partial differential equations,
as should be clear from (\ref{eq:Laplacein1order}).
Thus we shall formulate our results for divergence form elliptic systems of $m$ second order equations 
(but the reader interested in scalar equations only can set $m=1$ throughout). 
In this setting, our well posedness results are mostly new. 
Previously known results are limited to systems with coefficients of Jacobian type, or more generally 
constant coefficient systems on Lipschitz domains.

Well posedness of (Dir-$A$), (Reg-$A$) and (Neu-$A$) with $L_2$ boundary values have been obtained for 
the Stokes' system by Fabes, Kenig and Verchota~\cite{FKV}, and of (Dir-$A$) and (Reg-$A$) for 
the Lam\'e system by Dahlberg, Kenig and Verchota~\cite{DKV}.
For general constant coefficient symmetric second order systems, 
solvability result for (Neu-$A$) and (Reg-$A$) are found in Fabes~\cite{F}.
Under the weaker Legendre--Hadamard ellipticity condition, (Dir-$A$) and (Reg-$A$)
where solved by Gao~\cite{Gao}.
As for non-symmetic systems, Verchota and Vogel~\cite{VV97}  obtained $L_p$ solvability
results for (Dir-$A$), (Reg-$A$) and (Neu-$A$) in the spirit of \cite{KKPT} and \cite{KR} 
for certain non-symmetric constant coefficient Legendre--Hadamard systems of two equations on 
$C^1$ polygons in the plane.
For general elliptic systems, the Kato problem was solved by
Auscher, Hofmann, McIntosh and Tchamitchian~\cite{AHMcT}, a consequence being the
well posedness of (Neu-$A$) and (Reg-$A$) for elliptic systems with block form coefficients.

Note that the pullback technique, from the Lipschitz domain $\Omega$ to $\R^{n+1}_+$, described above, works for more general divergence form equations or systems. 
In this case, coefficients $\tilde A(x)$ in $\Omega$ are transformed into coefficients
$$
  A(x) := 
     \begin{bmatrix}
       1 & -(\nabla_x g(x))^t \\
       0 & I
     \end{bmatrix}
     \wt A(x)
     \begin{bmatrix}
       1 & 0 \\
       -\nabla_x g(x) & I
     \end{bmatrix}
$$
in $\R^{n+1}_+$.
We also remark that our methods are by no means limited to divergence form equations.
In Section~\ref{sec:forms}, we give solvability results for exterior differential systems 
(\ref{eq:diracwedgek}) for differential forms, as an example of this.
This generalizes the first order system (\ref{eq:Laplacein1order}), which is the special case
of (\ref{eq:diracwedgek}) for $1$-forms.
Furthermore, we note that time-harmonic Maxwell's equations on a Lipschitz domain
can be written as a system of equations (\ref{eq:diracwedgek}) for $1$ and $2$-forms with 
lower order terms added, through the above pullback technique.
Thus, although not directly applicable, (\ref{eq:diracwedgek}) is closely related
to Maxwell's equations.
Solvability of Maxwell's equations on Lipschitz domains is due to Mitrea~\cite{Mit},
and more general BVPs for Dirac equations were solved by McIntosh and Mitrea~\cite{McMi}.
In fact, the Cauchy integral boundary equation method used in this paper, as well as 
in \cite{AAH},  was developed for solving BVPs for Maxwell's and Dirac's equations
in the PhD thesis \cite{Axthesis} of the second author. 
Further elaborations of the ideas presented in this paper, along the lines of thought in \cite{AAH}
and \cite{AKMcMix},
working with general inhomogeneous differential forms taking values in the full exterior
algebra and allowing lower order terms, one should be able to extend the theory to 
cover both Dirac's and Maxwell's equations.

{\bf Acknowledgments.}

This work is an outgrowth of joint research conducted at Stockholm University and at Universit\'e de Paris-Sud, Orsay.
We acknowledge partial support from PHC FAST d'EGIDE No.12739WA 
for a trip of Auscher to Canberra.
Axelsson was supported by a travel grant from the Swedish Research Council, and by Universit\'e de Paris-Sud.
McIntosh acknowledges support from Stockholm University, Universit\'e de Paris-Sud, as well as the Centre for Mathematics and its Applications at the Australian National
University. This research was also supported by the Australian Government through the Australian Research  Council and through the International Science Linkages FAST program.

The authors acknowledge Steve Hofmann for freely sharing his insight into the harmonic analysis underlying this paper, and for the suggestion that the results be presented for systems rather than for single equations.

\section{Notation and results}   \label{sec:notres}

We begin by giving the precise definition of well posedness of the BVPs discussed in the 
introduction, or rather the corresponding BVPs for systems.
Throughout this paper, we use the notation $X \approx Y$ and $X \lesssim Y$ for estimates
to mean that there exists a constant $C>0$, independent of the variables in the estimate, 
such that $ X/C \le Y \le CX$ and $X\le C Y$, respectively.

We write $\{e_0,e_1,\ldots,e_n\}$ for the standard basis for $\R^{1+n}$ with 
$e_0$ ``upward'' pointing into $\R^{1+n}_+$, and write $t=x_0$ for the vertical
coordinate. For the vertical derivative, we write $\partial_0 = \partial_t$.
For vectors $v=(v_i^\alpha)_{0\le i\le n}^{1\le \alpha\le m}$, 
we write $v_0$ and $v_\ta$ for the
normal and tangential parts of $v$, 
i.e.~$(v_0)_0^\alpha= v_0^\alpha$ and $(v_0)_i^\alpha=0$ when $1\le i\le n$,
whereas
$(v_\ta)_i^\alpha= v_i^\alpha$ when $1\le i\le n$ and $(v_\ta)_0^\alpha=0$.
Frequently, we shall identify normal vector fields $v= v_0$ with the corresponding
scalar functions $v_0=(v_0^\alpha)_{\alpha=1}^m$.

For systems, gradient and divergence act as $(\nabla_{t,x}U)_i^\alpha= \pd_i U^\alpha$ and 
$(\divv_{t,x}F)^\alpha= \sum_{i=0}^n \pd_i F^\alpha_i$, with correponding
tangential versions $\nabla_x U= (\nabla_{t,x}U)_\ta$
and $(\divv_x F)^\alpha= \sum_{i=1}^n \pd_i F^\alpha_i$.
With $\curl_{t,x} F=0$ we understand that 
$\pd_jF_i^\alpha= \pd_i F_j^\alpha$, for all $i$, $j=0,\ldots,n,\alpha=1,\ldots, m$.
Similarly, write $\curl_{x} F_\ta=0$ if 
$\pd_jF_i^\alpha= \pd_i F_j^\alpha$, for all $i$, $j=1,\ldots,n,\alpha=1,\ldots, m$.

We consider divergence form second order elliptic systems
\begin{equation}  \label{eq:divform}
  \sum_{i,j=0}^n \sum_{\beta=1}^m \pd_i A^{\alpha,\beta}_{i,j}(x) \pd_j U^\beta(t,x) =0,\qquad \alpha=1,\ldots, m,
\end{equation}
on the half spaces $\R^{1+n}_\pm := \sett{(t,x)\in\R\times \R^n}{\pm t>0}$, $n\ge 1$,
where the matrix $A=(A^{\alpha,\beta}_{ij}(x))_{i,j=0,\ldots ,n}^{\alpha,\beta=1,\ldots ,m}\in L_\infty(\R^n;\mL(\C^{(1+n)m}))$ 
is assumed to be $t$-independent with complex coefficients and {\em strictly accretive} on
$\nul(\curl_\ta)$, in the sense that
there exists $\kappa>0$ such that
\begin{equation}   \label{eq:accrassumption}
  \sum_{i,j=0}^n\sum_{\alpha,\beta=1}^m \int_{\R^n} \re (A_{i,j}^{\alpha,\beta}(x)f_j^\beta(x) \conj{f_i^\alpha(x)}) dx\ge \kappa 
   \sum_{i=0}^n\sum_{\alpha=1}^m \int_{\R^n} |f_i^\alpha(x)|^2dx,
\end{equation}
for all 
$f\in\nul(\curl_\ta):= \sett{g\in L_2(\R^n;\C^{(1+n)m})}{\curl_x(g_\ta)=0}$. 

Equivalently, this means that a G\aa rding inequality
$$
 \int_{\R^n} \re \Big( A \big(\divv_x u_\ta + \nabla_x u_0 \big), \big( \divv_x u_\ta + \nabla_x u_0\big) \Big) dx \\
 \ge \kappa \int_{\R^n} (|\divv_x u_\ta|^2 + |\nabla_x u_0|^2) dx
$$
holds for $u= [u_0, u_\ta]\in \dot H^1(\R^n; \C^{(1+n)m})$, since $\divv_x$ has dense range in $L_2(\R^n;\C^m)$
and $\nabla_x$ has dense range in $\sett{g\in L_2(\R^n;\C^{nm})}{\curl_x g=0}$.
Splitting $\C^{(1+n)m}$ into normal parts $\C^m$ and tangential parts $\C^{nm}$, we write
$$
  A(x)v=
  \begin{bmatrix}
     A_{00}(x) & A_{0\ta}(x) \\ A_{\ta 0}(x) & A_{\ta\ta}(x) 
  \end{bmatrix}
  \begin{bmatrix} v_0 \\ v_\ta \end{bmatrix}.
$$
It is then clear that (\ref{eq:accrassumption}) implies that $A_{00}$ is pointwise strictly accretive
and that $A_{\ta\ta}$ satisfies a strict G\aa rding inequality
\begin{equation}   \label{eq:blockaccr}
  \begin{cases}
    \re(A_{00}(x)v,v)\ge \kappa |v|^2, & \quad v\in \C^{m}, \text{ a.e. } x\in\R^n,  \\
    \int_{\R^n}\re (A_{\ta\ta}\nabla_{x}u_0, \nabla_{x} u_0)dx \ge \kappa \int_{\R^n} |\nabla_{x} u_0|^2 dx,
    & \quad u_0\in \dot H^1(\R^n;\C^{m}).
  \end{cases}
\end{equation}
The condition (\ref{eq:accrassumption}) lies between {\em pointwise strict accretivity},
i.e. 
\begin{equation}    \label{eq:strongaccr}
  \re(A(x)v,v)\ge \kappa |v|^2,\qquad \text{for all } v\in \C^{(1+n)m} \text{ and a.e. } x\in\R^n, 
\end{equation}
and the $\R^{n+1}_+$ G\aa rding inequality
\begin{equation}   \label{eq:uppergarding}
  \iint_{\R^{n+1}_+} \re(A(x)\nabla_{t,x}g(t,x), \nabla_{t,x} g(t,x)) dtdx\ge \kappa \iint_{\R^{n+1}_+} |\nabla_{t,x} g(t,x)|^2 dtdx
\end{equation}
for $g\in \dot H^1(\R^{n+1}_+;\C^m)$.
Clearly, (\ref{eq:strongaccr}) implies (\ref{eq:accrassumption}), which in turn implies
(\ref{eq:uppergarding}), as is seen by taking $f(x)= \nabla_{t,x} g(t,x)$ for fixed $t$ and then 
integrating over $t$.
Furthermore (\ref{eq:uppergarding}), implies (\ref{eq:blockaccr}), which is seen by taking
 $g(t,x):= \psi(\epsilon t)u_0(x)$ and integrating away $t$, for some $\psi\in C^\infty_0(\R_+)$.
 Letting $\epsilon\rightarrow \infty$ and $\epsilon\rightarrow 0$ respectively proves (\ref{eq:blockaccr}).
In fact, only the G\aa rding inequality (\ref{eq:uppergarding}) for $g\in H^1_0(\R^{n+1}_+;\C^m)$
is needed for this argument.

When $n=1$, (\ref{eq:accrassumption}) is equivalent to strong accretivity (\ref{eq:strongaccr}) 
since in this case $\nabla_x$ has dense range in $L_2(\R;\C^m)$ and $\nul(\curl_\ta)=L_2(\R;\C^{2m})$.
On the other hand, if $A$ is of block form, i.e. $A_{0\ta}=A_{\ta 0}=0$, then (\ref{eq:accrassumption})
is equivalent to (\ref{eq:blockaccr}) and to the $\R^{n+1}_+$ G\aa rding inequality (\ref{eq:uppergarding}),
for $H^1(\R^{n+1}_+;\C^m)$ as well as for $H^1_0(\R^{n+1}_+;\C^m)$,
since (\ref{eq:blockaccr}) implies (\ref{eq:accrassumption}).
It is also known that the $\R^{n+1}_+$ G\aa rding inequality (\ref{eq:uppergarding}) implies 
strong accretivity (\ref{eq:strongaccr}) when $m=1$, 
so for scalar equations (\ref{eq:accrassumption}), (\ref{eq:uppergarding}) and (\ref{eq:strongaccr})
are all equivalent.

On the functions $U=(U^\alpha)_{\alpha=1}^m$ satisfying (\ref{eq:divform}), 
we impose one of the following boundary conditions.
\begin{itemize}
\item 
  (Dir-$A$):  $U^\alpha(0,x)=u^\alpha(x)$ for a given function $u\in L_2(\R^n;\C^m)$.
\item
 (Neu-$A$): $-\sum_{j,\beta}A_{0,j}^{\alpha,\beta}(x)\pd_j U^\beta (0,x)=\phi^\alpha(x)$, 
where $\phi\in L_2(\R^n;\C^m)$ is given.
\item (Reg-$A$): $\pd_i U^\alpha(0,x) = \pd_i u^\alpha(x)$, $1\le i\le n$, where
$u\in\dot H^1(\R^n;\C^m)$ is given.
\end{itemize}

The boundary value problems (Neu-$A$) and (Reg-$A$) 
can be viewed as problems concerning a first order partial
differential system, and this is the point of view we take here.
Indeed, consider the gradient vector fields
$$
  F(t,x)=\nabla_{t,x}U(t,x): \R^{1+n}_+\longrightarrow \C^{(1+n)m}.
$$
Since the scalar potentials $U$ are in one-to-one correspondence with the curl-free vector
fields $F(t,x)$, modulo constants, we can take $F$ rather than $U$ as the unknown,
and equation (\ref{eq:divform}) for $U$ is rewritten
as the equivalent first order system (\ref{eq:Laplacein1order}) for $F$.
Since the coefficients $A(x)$ are independent of $t$, it is natural to view $F$ 
from the semigroup point of view 
$F(t,x)= F_t(x)= \nabla_{t,x}U(t,x) \in C^1(\R_+; L_2(\R^n;\C^{(1+n)m}))$.

\begin{defn}   \label{defn:wellposed}
\begin{itemize}
\item[{\rm (i)}]
  We say that the boundary value problem (Neu-$A$) is {\em well posed} if for each 
  boundary data $\phi\in L_2(\R^n;\C^m)$, there exists a unique function
$$
  F(t,x)= F_t(x)= \nabla_{t,x}U(t,x) \in C^1(\R_+; L_2(\R^n;\C^{(1+n)m}))
$$
which satisfies (\ref{eq:Laplacein1order}) for $t>0$, and has
limits $\lim_{t\rightarrow \infty}F_t=0$ and $\lim_{t\rightarrow 0}F_t=f$ in $L_2$ norm, 
where the full boundary trace $f$ satisfies the boundary condition $-(Af)_0=\phi$.
More precisely, by $F$ satisfying (\ref{eq:Laplacein1order}), we mean that 
$\pd_t (AF)_0=-\divv_x(AF)_\ta$, $\pd_t F_\ta= \nabla_x F_0$ and $\curl_x F_\ta=0$, 
where $\pd_t$ is taken in the strong sense, and $x$-derivatives
in the distributional sense.
\item[{\rm (ii)}]
  We say that the boundary value problem (Reg-$A$)
  is {\em well posed} if for each boundary data $\nabla_x u\in L_2(\R^n;\C^{nm})$, 
there exists a unique function
$F\in C^1(\R_+; L_2(\R^n;\C^{(1+n)m}))$
which satisfies (\ref{eq:Laplacein1order}) for $t>0$, and has
limits $\lim_{t\rightarrow \infty}F_t=0$ and $\lim_{t\rightarrow 0}F_t=f$
in $L_2$ norm, where the full boundary trace $f$ satisfies the boundary condition $f_\ta=\nabla_x u$.
\item[{\rm (iii)}]
The Dirichlet problem (Dir-$A$) is said to be {\em well posed} if for each
$u\in L_2(\R^n;\C^m)$, there is a unique function
$$
U_t(x) =U(t,x)\in C^1(\R_+; L_2(\R^n;\C^m))
$$
such that $\nabla_{x} U\in C^0(\R_+; L_2(\R^n;\C^{nm}))$,
where $U$ satisfies (\ref{eq:divform}) for $t>0$,
$\lim_{t\rightarrow 0}U_t=u$,
$\lim_{t\rightarrow \infty}U_t=0$, $\lim_{t\rightarrow \infty}\nabla_{t,x} U_t=0$ 
in $L_2$ norm,
and 
$\int_{t_0}^{t_1} \nabla_{x} U_s\, ds$
converges in $L_2$ when $t_0\rightarrow 0$ and $t_1\rightarrow\infty$.
More precisely, by $U$ satisfying (\ref{eq:divform}), we mean that 
$\int_t^\infty ((A\nabla_{s,x}U_s)_\ta, \nabla_x v)ds= -( (A\nabla_{t,x}U_t)_0, v)$
for all $v\in C_0^\infty(\R^{n};\C^m)$.
\end{itemize}
\end{defn}

The Dirichlet problem (Dir-$A$) will also be rewritten as a BVP for the first order system
(\ref{eq:Laplacein1order}).
However, here it is not appropriate to consider the gradient vector field
$\nabla_{t,x}U$, since the boundary condition is a condition on the potential $U$ itself.
Instead we use the point of view of harmonic conjugate functions, and write
$F= U e_0 + F_\ta$, where the tangential vector fields $F_\ta$ are conjugate functions in a 
generalized sense and $F$ satisfies (\ref{eq:Laplacein1order}), which is viewed as
a generalized Cauchy--Riemann system.
Details of this are given in Lemma~\ref{lem:conjfcnreduction}, where it is shown that 
the Dirichlet problem (Dir-$A$) for $U$
is equivalent to an auxiliary Neumann problem (Neu$^\perp$-$A$) for $F$.

Our main result, which we prove this in Section~\ref{sec:divbvp}, is the following.
\begin{thm}   \label{thm:bvpfordivform}
  The sets $WP(Reg)$, $WP(Neu)$ and $WP(Dir)$, as defined in (\ref{eq:setsofwp}), are all open
  subsets of $L_\infty(\R^n; \mL(\C^{(1+n)m}))$.
  Each of the sets of well posedness contains
\begin{itemize}
\item[{\rm (i)}] all Hermitean matrices $A(x)= A(x)^*$ (and in particular all real symmetric matrices),
\item[{\rm (ii)}] all block matrices where $A_{0,i}^{\alpha,\beta}(x)=0=A_{i,0}^{\alpha,\beta}(x)$, $1\le i\le n, 1\le \alpha, \beta\le m$, and
\item[{\rm (iii)}] all constant matrices $A(x)=A$.
\end{itemize}
\end{thm}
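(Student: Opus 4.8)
The plan is to reduce all three boundary value problems to the invertibility of explicit bounded operators on $L_2(\R^n)$, and then to exploit two facts established earlier: the Cauchy operator $E_A = \sgn(T_A)$ is bounded for \emph{every} strictly accretive $A$, with bounds locally uniform in $A$ (this is where the quadratic estimates of \cite{AKMc} enter), and the Hardy splitting of Theorem~\ref{thm:mainhardy}. First I would apply Lemma~\ref{lem:conjfcnreduction} to replace (Dir-$A$) by the auxiliary problem (Neu$^\perp$-$A$), so that it suffices to treat first order problems for the system \eqref{eq:Laplacein1order}. For such a problem the solutions $F$ with $\lim_{t\to\infty}F_t = 0$ are exactly $F_t = e^{-tT_A}f$ with $f$ in the Hardy subspace $\mH_A^+ := E_A^+L_2(\R^n;\C^{(1+n)m})$, where $E_A^\pm = \tfrac12(I \pm E_A)$; Theorem~\ref{thm:mainhardy} gives $L_2 = \mH_A^+ \oplus \mH_A^-$ with the original norm comparable to $\|f\|$ on each summand. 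Hence (Neu-$A$) is well posed precisely when the bounded map $N_A\colon \mH_A^+ \to L_2(\R^n;\C^m)$, $f\mapsto -(Af)_0$, is an isomorphism; (Reg-$A$) is well posed precisely when $R\colon \mH_A^+ \to \sett{g\in L_2(\R^n;\C^{nm})}{\curl_x g = 0}$, $f\mapsto f_\ta$, is an isomorphism; and similarly for (Neu$^\perp$-$A$). Thus each of $WP(Neu)$, $WP(Reg)$, $WP(Dir)$ equals the set of accretive $A$ for which one designated bounded operator is invertible.

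Next I would show that this designated operator depends holomorphically, in particular continuously, on $A$ in the $L_\infty$ topology; openness of $WP(X)$ is then immediate, since the invertible operators between two fixed Banach spaces form an open set. Fix a reference accretive $A_0$. Writing $T_A$ in the first order $DB$-form of \cite{AKMc} and representing $E_A^+$ by a Cauchy contour integral of the resolvents $(\lambda I - T_A)^{-1}$, the locally uniform functional calculus bounds show that $A \mapsto E_A^+ \in \mL(L_2)$ is holomorphic near $A_0$. Since the subspaces $\mH_A^\pm$ themselves move with $A$, I would transfer everything to the fixed space $\mH_{A_0}^+$ by precomposing with $E_A^+|_{\mH_{A_0}^+}\colon \mH_{A_0}^+ \to \mH_A^+$, which is an isomorphism for $A$ close to $A_0$; the resulting operators $N_A\, E_A^+|_{\mH_{A_0}^+}$ and $R\, E_A^+|_{\mH_{A_0}^+}$ depend continuously on $A$, because $f\mapsto -(Af)_0$ is affine in $A$ and $f\mapsto f_\ta$ does not involve $A$ at all. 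Therefore, if $A_0 \in WP(X)$, the designated operator is invertible at $A_0$, stays invertible for $A$ near $A_0$, and unwinding the equivalence of the first step gives $A \in WP(X)$.

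It then remains to check that the three classes lie in the sets of well posedness. For constant $A$ one takes the Fourier transform in $x$: at each frequency \eqref{eq:Laplacein1order} is a constant coefficient ODE in $t$ whose coefficient matrix has no spectrum on the imaginary axis --- an Agmon--Douglis--Nirenberg type consequence of the ellipticity \eqref{eq:accrassumption} --- so the stable subspace is $\mH^+$ and the maps $N$, $R$ are elementary isomorphisms; this is classical. For block $A$ (so $A_{0\ta}=A_{\ta 0}=0$, whence \eqref{eq:accrassumption} forces \eqref{eq:blockaccr}) the system \eqref{eq:Laplacein1order} decouples, and $T_A$, hence $E_A$ and the boundary maps, are built from $L := A_{00}^{-1}(-\divv_x A_{\ta\ta}\nabla_x)$; invertibility of $N_A$ and of $R$ is then equivalent to the two-sided bound $\|\sqrt{L}\,u\| \approx \|\nabla_x u\|$, which is exactly the Kato square root estimate of \cite{AHLMcT}, \cite{AHMcT}, and the Dirichlet case follows the same way through the semigroup $e^{-t\sqrt{L}}$ (or through Lemma~\ref{lem:conjfcnreduction}). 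For Hermitean $A = A^*$ I would prove invertibility of $R$ --- hence, by duality between the regularity and Dirichlet problems ($A\in WP(Reg) \Leftrightarrow A^*\in WP(Dir)$), of the Dirichlet map as well, since $A = A^*$ --- and of $N_A$ by a Rellich-type identity: for $F_t = e^{-tT_A}f$ one differentiates in $t$ a sesquilinear quantity built from $A = A^*$, morally $\int_{\R^n}\re(AF_t,F_t)$ together with cross terms of the form $\int_{\R^n}\re((AF_t)_0,(F_t)_\ta)$, and integrates from $0$ to $\infty$; accretivity on $\nul(\curl_\ta)$ converts the resulting identity into $\|(Af)_0\|_2 \approx \|f_\ta\|_2 \approx \|f\|_2$ for $f\in\mH_A^+$, which forces $N_A$ and $R$ to be isomorphisms.

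The step I expect to be the main obstacle is the Hermitean case: justifying the Rellich-type identity and the resulting two-sided estimate when $A$ is merely bounded and measurable, so that $F$ is only a weak solution whose $x$-derivatives exist in the distributional sense. Every integration by parts must be carried out through the functional calculus of $T_A$ and the mapping properties of the projections $E_A^\pm$, not by pointwise manipulation of solutions --- this is where the quadratic estimates of \cite{AKMc} do the real work. A secondary technical point is the bookkeeping in the second step for the $A$-dependent subspaces $\mH_A^\pm$, which must be arranged so that ``the Hardy projection'' genuinely forms an analytic family near $A_0$.
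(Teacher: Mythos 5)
Your proposal follows essentially the same route as the paper: reduce (Dir-$A$) to (Neu$^\perp$-$A$) via Lemma~\ref{lem:conjfcnreduction}, characterize well posedness as invertibility of restriction maps on the Hardy subspace $E_A^+\mH$, deduce openness from the continuity of $A\mapsto E_A^+$ (the paper packages this as Lemma~\ref{lem:vardomains}), exploit the Kato square root estimate for block matrices, a Rellich identity for Hermitean matrices, and the Fourier transform for constant ones.

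However, there is a genuine gap in the Hermitean case (and the same issue recurs for constant matrices when $m>1$). Your Rellich-type identity yields the two-sided a priori estimate $\|(Af)_0\|_2 \approx \|f_\ta\|_2 \approx \|f\|_2$ on $E_A^+\mH$, but that only proves the boundary maps $N_A$ and $R$ are \emph{injective with closed range}; it does not, by itself, give surjectivity, so it does not ``force'' them to be isomorphisms as you claim. Two operators can both be semi-Fredholm with a common codimension defect. The paper closes this gap with the method of continuity: it considers the homotopy $A_t := (1-t)I + tA$, observes that the Rellich estimate holds uniformly along the path (so the maps stay semi-Fredholm with locally constant index), and at $t=0$ the map is invertible because $I\in WP$; index stability then forces surjectivity at $t=1$. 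An analogous continuity argument, perturbing to $I$, is used for constant systems with $m>1$ after the ``reverse Rellich'' estimate on the symbol side. You should add this step, since without it the invertibility claims for (i) and the systems case of (iii) are not established.
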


The notion of well posedness used here departs from the standard variational one.
However, we show in Section~\ref{sec:unique} that the solutions obtained 
here coincide with the solutions obtained through the Lax--Milgram Theorem
when $A$ belongs to the connected component of $WP$ which contains $I$. 
This connected component includes the three classes (i), (ii) and (iii)
specified in Theorem~\ref{thm:bvpfordivform}.
The notion of well posedness used here coincides with that in \cite{AAH} for the BVPs (Neu-$A$) and (Reg-$A$) .
However, for (Dir-$A$)  the meaning of well posedness differs slightly from that in \cite{AAH},
as we impose an extra integrability condition on $\nabla_x U$ here.

A natural function space for solutions $F_t(x) \in C^1(\R_+; L_2(\R^n;\C^{(1+n)m}))$ to the BVPs
is $L_\infty(\R_+;L_2)$, with norm $\sup_{t>0} \|F_t\|_2$.
Two other norms which are important are the square function norm $\tb{F_t}^2:= \int_0^\infty \|F_t\|_2^2\, t^{-1}dt$
and the norm $\|\widetilde N_* (F)\|_2$, using the modified {\em non-tangential maximal function}
$$
\widetilde N_*(F)(x):= \sup_{t>0}  t^{-(1+n)/2} \|F\|_{L_2(Q(t,x))},
$$
where $Q(t,x):= [(1-c_0)t,(1+c_0)t]\times B(x;c_1t)$,
for some fixed constants $c_0\in(0,1)$, $c_1>0$.

The key result underlying Theorem~\ref{thm:bvpfordivform}, which we prove in Section~\ref{sec:ahat},
is the following result on 
Hardy type splittings of $L_2(\R^n)$.
\begin{thm}  \label{thm:mainhardy}
Let $A \in L_\infty(\R^n;\mL(\C^{(1+n)m}))$ be a $t$-independent, complex matrix function
which is strictly accretive on $\nul(\curl_\ta)$.

Then each $f\in\nul(\curl_\ta)$ is in one-to-one correspondence 
with a pair of vector
fields $F^\pm_t(x)=F^\pm(t,x)\in C^1(\R_\pm; L_2(\R^n;\C^{(1+n)m}))$ in $\R^{1+n}_\pm$
satisfying (\ref{eq:Laplacein1order}) and having $L_2$ limits 
$\lim_{t\rightarrow 0^\pm}F^\pm_t=f^\pm$ and $\lim_{t\rightarrow \pm\infty}F^\pm_t=0$,
such that  
$$
f=f^++f^-.
$$
This splitting is topological, i.e.~$\|f\|_2\approx \|f^+\|_2 +\|f^-\|_2$,
and the vector fields $F^\pm$ satisfy norm equivalences
$$
   \|f\|_2\approx \sup_{t>0} \|F_t\|_2
  \approx \tb{t\pd_t F_t}
  \approx \|\widetilde N_* (F)\|_2.
$$
Moreover, the Hardy projections $E_A^\pm:f\mapsto F^\pm=F^\pm_A$
depend locally Lipschitz continuously on $A$ in the sense that
$$
  \|F^\pm_{A_2}-F^\pm_{A_1}\|_{\mX}\le C \|A_2-A_1\|_{L_\infty(\R^n)} \|f\|_2,
$$
where $C=C(\kappa_{A_1}, \kappa_{A_2}, \|A_1\|_\infty, \|A_2\|_\infty)$ and where
$\|F\|_\mX$ denotes any of the four norms above.
\end{thm}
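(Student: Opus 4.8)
The plan is to realize the first order system (\ref{eq:Laplacein1order}) as an evolution equation and then read off every assertion from the functional calculus of the generating operator. After eliminating the constraints $\divv$ and $\curl$, one rewrites (\ref{eq:Laplacein1order}) as $\pd_t F + T_A F = 0$ on the natural Hilbert space $\mH = \nul(\curl_\ta)$ (more precisely the closure of the range of the underlying differential operator), where $T_A$ is the closed bisectorial operator obtained as a bounded accretive multiplication operator composed with a self-adjoint first order differential operator --- exactly the class treated in \cite{AKMc}. Building on the quadratic estimates of \cite{AKMc}, which form the analytic core, $T_A$ has a bounded holomorphic functional calculus on $\mH$ with bounds depending only on $\kappa$ and $\|A\|_\infty$; in particular $E_A = \sgn(T_A)$ is bounded. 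The first step is to record the spectral consequences: the bounded idempotents $E_A^\pm := \tfrac12(I \pm \sgn(T_A))$ give a topological splitting $\mH = E_A^+\mH \oplus E_A^-\mH$ with $\|f\|_2 \approx \|E_A^+ f\|_2 + \|E_A^- f\|_2$, and $T_A$ restricted to $E_A^\pm\mH$ is sectorial with spectrum in the open right/left half-plane, so $-T_A$ generates a bounded analytic semigroup $e^{-tT_A}$ decaying as $t\to+\infty$ on $E_A^+\mH$ and as $t\to-\infty$ on $E_A^-\mH$.

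Given $f\in\nul(\curl_\ta)$, I would then set $F^+_t := e^{-tT_A}E_A^+ f$ for $t>0$ and $F^-_t := e^{-tT_A}E_A^- f$ for $t<0$. Differentiating the functional calculus representation gives $\pd_t F^\pm + T_A F^\pm = 0$ in the strong sense, which unwinds to the statement that $F^\pm$ solves (\ref{eq:Laplacein1order}) in $\R^{1+n}_\pm$ in the sense of Definition~\ref{defn:wellposed}, while strong continuity and decay of the semigroup give $\lim_{t\to 0^\pm}F^\pm_t = E_A^\pm f$ and $\lim_{t\to\pm\infty}F^\pm_t = 0$. Thus $f = f^+ + f^-$ with $f^\pm = E_A^\pm f$, and $f \mapsto (F^+,F^-)$ is well defined and injective since the traces recover $E_A^\pm f$. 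For the converse --- that every pair of solutions of (\ref{eq:Laplacein1order}) in $\R^{1+n}_\pm$ with the stated decay arises this way --- I would show that such a solution must equal $e^{-tT_A}$ applied to its boundary trace and that the trace lies in $E_A^\pm\mH$, the point being that a nonzero component in the wrong spectral subspace is incompatible with decay at $\pm\infty$; this is the uniqueness half of the one-to-one correspondence.

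Next come the norm equivalences. The bound $\sup_{t>0}\|F_t\|_2 \approx \|f\|_2$ is immediate from uniform boundedness of the semigroup together with $F_t\to f$. The square function equivalence $\tb{t\pd_t F_t} \approx \|f\|_2$ is precisely the quadratic estimate of $T_A$ tested on the non-degenerate holomorphic function $\psi(z)=z e^{-z}$, valid in both directions and with constants controlled by $\kappa,\|A\|_\infty$. The non-tangential estimate $\|\widetilde N_*(F)\|_2 \approx \|f\|_2$ is the least formal point: the lower bound is elementary, while for the upper bound I would combine interior Caccioppoli estimates for the system (bounding $\|F\|_{L_2(Q(t,x))}$ by $L_2$ integrals of $F$ over a slightly larger Whitney box, with no $t$-derivative on the right) with the $L_\infty(L_2)$ and square function bounds via a Whitney-averaging argument --- crucially using only $L_2$ interior bounds, never pointwise De Giorgi--Nash--Moser estimates, so that the argument survives for genuine systems.

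Finally, for the local Lipschitz dependence on $A$, one first checks that $T_{A_2}-T_{A_1}$ equals a multiplication operator of norm $\lesssim \|A_2-A_1\|_\infty$ composed with the fixed self-adjoint operator (this uses uniform accretivity of $A_{00}$ to control the nonlinear map $A\mapsto\hat A$). The second resolvent identity then yields $\|(z-T_{A_2})^{-1} - (z-T_{A_1})^{-1}\| \lesssim \|A_2-A_1\|_\infty\, |z|^{-2}$ on the relevant double sector, with constant depending only on $\kappa_i,\|A_i\|_\infty$. Feeding this into the Cauchy integral representations of $E_A^\pm$ and of $e^{-tT_A}$ gives the Lipschitz bound in the $L_\infty(L_2)$ and square function norms; for the $\widetilde N_*$ norm one re-runs the Caccioppoli/Whitney argument on $F_{A_2}-F_{A_1}$, which solves (\ref{eq:Laplacein1order}) with an inhomogeneous term of size $\|A_2-A_1\|_\infty$. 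The main obstacle, and the only place the proof genuinely leaves abstract operator theory, is the non-tangential maximal function estimate: it demands the interior $L_2$ regularity theory for the system and a careful Whitney-box/good-$\lambda$ argument, and it must be carried out uniformly enough that the perturbation step inherits the stated dependence on $(\kappa,\|A\|_\infty)$.
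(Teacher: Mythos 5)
Your proposal follows essentially the same path as the paper: rewrite (\ref{eq:Laplacein1order}) as $\partial_t F + T_AF=0$ with $T_A$ a perturbed Dirac-type operator $\oA^{-1}D\uA$, invoke the quadratic estimates of \cite{AKMc} to get the bounded $H^\infty$ calculus and hence bounded Hardy projections $E_A^\pm=\chi^\pm(T_A)$, realize $F^\pm$ through the decay semigroup $e^{-t|T_A|}$ applied to $E_A^\pm f$, and read off the norm equivalences from boundedness of the calculus and the quadratic estimate. Your uniqueness sketch (splitting a general solution into $E_A^\pm$ components and ruling out the component incompatible with decay) is the same argument the paper makes precise by multiplying by $e^{\pm(s-t)|T_A|}$ and taking limits. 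For $\widetilde N_*$ you correctly identify that one cites the Caccioppoli/Whitney-averaging argument of \cite[Prop.\ 2.56]{AAH}, which the paper also does.

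The one genuine difference is in the Lipschitz-dependence step. You propose the direct second-resolvent-identity perturbation, then ``feeding this into the Cauchy integral representations of $E_A^\pm$.'' Taken literally this has a small gap: $\sgn$ and $\chi^\pm$ are not integrable along the boundary of the double sector, so $\chi^\pm(T_A)$ is \emph{not} given by an absolutely convergent Cauchy integral against the resolvent, and the bare resolvent estimate does not transfer to these operators without a regularization controlled by the quadratic estimates. The paper avoids this by complexifying the line segment $\zeta\mapsto A_1+\zeta(A_2-A_1)/\|A_2-A_1\|_\infty$, proving (via the Convergence Lemma and \cite[Thm.\ 6.4]{AKMc}-style arguments) that $\zeta\mapsto b(tT_{A(\zeta)})f$ is analytic with uniform $L_2$ bounds, and then extracting the Lipschitz bound from Cauchy's estimate on the $\zeta$-derivative. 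Both routes ultimately rest on the same resolvent identity plus uniform quadratic estimates, so your plan is recoverable, but the analyticity detour is the cleaner way to justify passing from resolvents to the full functional calculus; you would need to make that passage explicit. Apart from this organizational point, your outline matches the paper's proof.
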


Restricting our attention to the Dirichlet problem in the upper half space, 
Theorem~\ref{thm:mainhardy} shows in particular the following.

\begin{cor}   \label{cor:DJK}
Let $A \in L_\infty(\R^n;\mL(\C^{(1+n)m}))$ be a $t$-independent, complex matrix function
which is strictly accretive on $\nul(\curl_\ta)$ and 
assume that $A\in WP(Dir)$.
Then any function $U_t(x) =U(t,x)\in C^1(\R_+; L_2(\R^n;\C^m))$ solving (\ref{eq:divform}),
with properties as in Definition~\ref{defn:wellposed}, has estimates
$$
  \int_{\R^n}|u|^2 dx\approx\sup_{t>0}\int_{\R^n}|U_t|^2 dx\approx \int_{\R^n}  | \wt N_*(U)|^2 dx 
  \approx \iint_{\R^{1+n}_+} |\nabla_{t,x} U|^2 t \, dtdx,
$$
where $u=U|_{\R^n}$.
If furthermore $A$ is real (not necessarily symmetric) and $m=1$, then 
Moser's local boundedness  estimate \cite{Mos} gives the pointwise estimate
$\wt N_*(U)(x) \approx N_*(U)(x)$, where the standard non-tangential maximal function is
$N_*(U)(x):= \sup_{|y-x|<c t}|U(t,y)|$,
for fixed $0<c<\infty$.
\end{cor}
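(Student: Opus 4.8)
The plan is to pull the Dirichlet datum $U$ back to the first order system (\ref{eq:Laplacein1order}) via the conjugate function reduction, apply Theorem~\ref{thm:mainhardy} to the resulting field $F$, and then close the chain of equivalences using the a priori bound that comes with membership in $WP(Dir)$. Concretely, let $U$ be as in the statement and set $u=U|_{\R^n}$. Following Lemma~\ref{lem:conjfcnreduction}, form $F:=Ue_0+F_\ta$ with $F_{\ta,t}:=-\int_t^\infty\nabla_xU_s\,ds$. The convergence and decay clauses built into Definition~\ref{defn:wellposed}(iii) are precisely what is needed for $F_{\ta,t}$ to be well defined in $L_2$, for $F_t\to 0$ in $L_2$ as $t\to\infty$, and for the limit $f:=\lim_{t\to 0}F_t$ to exist in $L_2$ with $f\in\nul(\curl_\ta)$ and $f_0=u$; the same hypotheses, together with $\partial_tF_\ta=\nabla_xF_0$ and $\curl_xF_\ta=0$, show that $F$ solves (\ref{eq:Laplacein1order}) in $\R^{1+n}_+$. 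The key identity is
\[
  \partial_tF=(\partial_tU)e_0+\partial_tF_\ta=(\partial_tU)e_0+\nabla_xU=\nabla_{t,x}U .
\]
Since $F$ is a solution in $\R^{1+n}_+$ decaying at $+\infty$, its trace lies in the upper Hardy space, i.e.\ $f=f^+$, $f^-=0$ and $F=F^+$ in the notation of Theorem~\ref{thm:mainhardy}.

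Now Theorem~\ref{thm:mainhardy} gives $\|f\|_2\approx\sup_{t>0}\|F_t\|_2\approx\tb{t\partial_tF_t}\approx\|\widetilde N_*(F)\|_2$. The identity $\partial_tF=\nabla_{t,x}U$ converts the square function exactly: $\tb{t\partial_tF_t}^2=\iint_{\R^{1+n}_+}|\nabla_{t,x}U|^2\,t\,dtdx$. Since $U=F_0$ we have $|U|\le|F|$ pointwise, so $\sup_t\|U_t\|_2\le\sup_t\|F_t\|_2$ and $\|\widetilde N_*(U)\|_2\le\|\widetilde N_*(F)\|_2$, and moreover $\|u\|_2=\|f_0\|_2\le\|f\|_2$. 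To go back, use $WP(Dir)$: the subspace $\mH^+:=\{g\in\nul(\curl_\ta):g=g^+\}$ is closed in $L_2$ (it is the range of the bounded idempotent $g\mapsto g^+$ furnished by Theorem~\ref{thm:mainhardy}), and, via Lemma~\ref{lem:conjfcnreduction}, $A\in WP(Dir)$ means exactly that the bounded map $\mH^+\to L_2(\R^n;\C^m)$, $g\mapsto g_0$, is a bijection; hence by the open mapping theorem $\|f\|_2\lesssim\|u\|_2$. Chaining, $\|u\|_2\le\sup_t\|U_t\|_2\le\sup_t\|F_t\|_2\approx\|f\|_2\approx\|u\|_2$, and similarly the square function and $\|\widetilde N_*(U)\|_2$ are pinned to $\|u\|_2$ from above. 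The one remaining inequality $\|u\|_2\lesssim\|\widetilde N_*(U)\|_2$ is the standard general fact that any $V$ on $\R^{1+n}_+$ having an $L_2$ trace $v=\lim_{t\to 0}V_t$ satisfies $\|v\|_2\lesssim\|\widetilde N_*(V)\|_2$: cover $\R^n$ by a bounded-overlap grid of balls of radius $\sim t$, dominate the $s$-average over $s\sim t$ of $\|V_s\|_{L_2(B_k)}^2$ by a constant times $t\int_{B_k}\widetilde N_*(V)^2$, sum, and let $t\to 0$. This gives the full four-way equivalence.

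For the last assertion, suppose $A$ is real and $m=1$. Then weak solutions of (\ref{eq:divform}) obey the De Giorgi--Nash--Moser local boundedness estimate \cite{Mos}: on a Whitney box $Q'=[(1-c_0)t,(1+c_0)t]\times B(x;c_1t)$ one has $\sup_{Q'}|U|\lesssim\big(t^{-(1+n)}\iint_{Q''}|U|^2\big)^{1/2}$ for a slightly dilated box $Q''$. Taking the supremum over $t>0$, and absorbing the dilation by adjusting the apertures $c_0,c_1,c$, yields $N_*(U)(x)\lesssim\widetilde N_*(U)(x)$; the reverse inequality $\widetilde N_*(U)\le N_*(U)$ is immediate since an $L_2$ average is dominated by the supremum. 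Hence $\|N_*(U)\|_2\approx\|\widetilde N_*(U)\|_2\approx\|u\|_2$.

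The bulk of the analytic content lives in Theorem~\ref{thm:mainhardy}, which we are assuming; given it, the corollary is essentially an assembly argument. The step that genuinely requires care is the conjugate reduction in the first paragraph: verifying that the field $F$ built from $U$ really solves (\ref{eq:Laplacein1order}), decays at $+\infty$, and has an $L_2$ boundary trace — this is exactly where the somewhat technical integrability and decay conditions in Definition~\ref{defn:wellposed}(iii) get consumed, and is the substance of Lemma~\ref{lem:conjfcnreduction}. A minor secondary point is extracting the a priori estimate $\|f\|_2\lesssim\|u\|_2$ from the bare existence/uniqueness formulation of $WP(Dir)$, which we do via the open mapping theorem.
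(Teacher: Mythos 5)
Your proposal is correct and follows essentially the same route as the paper: pull $U$ back to $F = Ue_0 + F_\ta$ via Lemma~\ref{lem:conjfcnreduction}, observe $\pd_t F = \nabla_{t,x}U$ and that the trace $f$ lies in $E_A^+\mH$, invoke the four-way equivalence from Theorem~\ref{thm:mainhardy}, use well-posedness of (Neu$^\perp$-$A$) to get $\|f\|_2\approx\|f_0\|_2=\|u\|_2$, and close the non-tangential maximal estimate. The only noticeable divergence is in the bound $\|u\|_2\lesssim\|\wt N_*(U)\|_2$: the paper first establishes $\|U_t\|_2\lesssim\|U_s\|_2$ for $t>s$ (a consequence of $\|u\|_2\approx\sup_t\|U_t\|_2$) and then uses this near-monotonicity to bound $\sup_t\|U_{(1+c_0)t}\|_2$ by the maximal function, whereas you instead invoke the cleaner generic fact that any field with an $L_2$ limit at $t=0$ has that limit controlled in $L_2$ by $\wt N_*$, which needs only the $L_2$ convergence $U_t\to u$ and a Fubini/averaging computation on the slab $(1-c_0)t<s<(1+c_0)t$. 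Both work here; your version is marginally more self-contained since it bypasses the monotonicity step. Your handling of the Moser step and the pointwise comparison of $N_*$ and $\wt N_*$ matches the paper's intent (modulo the usual tacit change of aperture constants, which both you and the paper suppress), so no complaint there.
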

Theorem~\ref{thm:bvpfordivform} shows in particular that $A\in WP(Dir)$ if $A$ is real symmetric.
Even for real symmetric $A$ our methods yield a new proof of the estimate between 
the square function and the non-tangential maximal function above,
first proved by Dahlberg, Jerison and Kenig~\cite{DJK},
using estimates of harmonic measure by Jerison and Kenig~\cite{JK1},
in the scalar case $m=1$.
In \cite{AAH}, these estimates were used to prove $\|E_A\|<\infty$ for real symmetric $A$. 
Here we reverse the argument: we prove $\|E_A\|<\infty$ independently and 
deduce from this the equivalences of norms.

In the case $m>1$ of systems, Dahlberg, Kenig, Pipher and Verchota~\cite{DKPV} have shown
equivalence in $L_p$ norm of the non-tangential maximal function and the square function,
for general constant coefficient real symmetric systems on Lipschitz domains.
Also, such equivalence has been shown for certain non-symmetric systems of two equations 
in the plane by Verchota and Vogel~\cite{VV00}.

\section{Cauchy operators and Hardy spaces}   \label{sec:ahat}

The boundary equation methods for solving BVPs which are used here and in \cite{AAH},
are based on Cauchy operators $E_A$, 
with associated Hardy type subspaces.
In this section we prove quadratic estimates for $E_A$ and deduce from this Theorem~\ref{thm:mainhardy}.
How these Cauchy operators are used to prove Theorem~\ref{thm:bvpfordivform}
is shown in Section~\ref{sec:divbvp} and in particular Lemma~\ref{lem:vardomains}.
Note that the operators $E_A$ themselves depend only on the differential system (\ref{eq:Laplacein1order}),
and have nothing to do with the boundary conditions. Thus they are the same for both Neumann and Dirichlet problems.

We start by rewriting the Equations (\ref{eq:Laplacein1order}) in terms of an
``infinitesimal generator'' $T_A$.
Write $v\in \C^{(1+n)m}$ as $v = [v_0, v_\ta]^t$, where $v_0\in\C^m$ and $v_\ta\in\C^{nm}$, and
introduce the auxiliary matrices 
$$
  \oA:=
  \begin{bmatrix}
     A_{00} & A_{0\ta} \\ 0 & I
  \end{bmatrix}, \quad
  \uA:=
  \begin{bmatrix}
     1 & 0 \\ A_{\ta 0} & A_{\ta\ta} 
  \end{bmatrix},
  \qquad\text{if }
  A=
  \begin{bmatrix}
     A_{00} & A_{0\ta} \\ A_{\ta 0} & A_{\ta\ta} 
  \end{bmatrix}
$$
in the normal/tangential splitting of $\C^{(1+n)m}$.
Recall that $A$ being strictly accretive on $\nul(\curl_\ta)$, as in (\ref{eq:accrassumption}),
implies the accretivity estimates (\ref{eq:blockaccr}) for the diagonal blocks $A_{00}$
and $A_{\ta\ta}$.
Since $A_{00}$ is pointwise strictly accretive, it is invertible, and consequently $\oA$ is invertible.
This is not necessarily true for $\uA$.

Splitting normal and tangential derivatives in (\ref{eq:Laplacein1order}), we see that 
this system of equations is equivalent to
$$
\begin{cases}   
  \pd_t ( A F)_0 + \divv_x(AF)_\ta  =0, \\
  \pd_t F_\ta-\nabla_x F_0 =0,
\end{cases}
$$
together with the constraint $\curl_x F_\ta=0$. Since
\begin{equation}   \label{eq:uoa}
   \oA F=(AF)_0+F_\ta \quad\text{and} \quad
   \uA F= F_0+(AF)_\ta, 
\end{equation}  
we have shown that
(\ref{eq:Laplacein1order}) is equivalent to 
$$
  \begin{cases}
    \pd_t (\oA F)_0 + \divv_x(\uA F)_\ta = 0, \\
    \pd_t (\oA F)_\ta -\nabla_x(\uA F)_0=0,
  \end{cases}
$$
together with the tangential constraint $\curl_x F_\ta=0$.
Combining the two equations, we get
\begin{equation}   \label{eq:generator}
  \pd_t F+ T_A F=0,
\end{equation}
where $T_A$ is the following operator.

\begin{defn}  \label{defn:hattransforms}

Let $D$ be the self-adjoint differential operator 
$D:=    \begin{bmatrix}
        0 & \divv_x \\
        -\nabla_x & 0
     \end{bmatrix}
$
in $L_2(\R^n;\C^{(1+n)m})$ with 
domain $\dom(D):= \sett{[f_0, f_\ta]^t\in L_2(\R^n;\C^{(1+n)m})}{\nabla_x f_0, \, \divv_x f_\ta\in L_2}$.
Define the infinitesimal generator for (\ref{eq:Laplacein1order}) to be the
operator
$$
  T_A:= \oA^{-1} D \uA=      
     \begin{bmatrix}
        A_{00}^{-1}(A_{0\ta}\nabla_x+\divv_x A_{\ta 0} ) &
        A_{00}^{-1}\divv_x A_{\ta\ta} \\
        -\nabla_x & 0
     \end{bmatrix},
$$
with domain $\dom(T_A):= \uA^{-1}\dom(D)$.
Let the transformed coefficient matrix be
$$
   \hat A:= \uA \oA^{-1}= 
  \begin{bmatrix}
     A_{00}^{-1} & - A_{00}^{-1 }A_{0\ta} \\ A_{\ta 0} A_{00}^{-1} & A_{\ta\ta} -A_{\ta 0} A_{00}^{-1} A_{0\ta} 
  \end{bmatrix}
$$
so that
\begin{equation}   \label{eq:similarity}
   T_A= \oA^{-1}(D\hat A)\oA. 
\end{equation}
\end{defn}

The following is the main algebraic result of the paper.
Recall that $\nul(\curl_\ta)= \sett{g\in L_2(\R^n;\C^{(1+n)m})}{\curl_x(g_\ta)=0}$
and note that $\nul(\curl_\ta)=\clos{\ran(D)}$. 
\begin{prop}  \label{prop:ahataccr}
   The transformed coefficient matrix 
   $\hat A$ is bounded and strictly accretive on $\nul(\curl_\ta)$, i.e. satisfies (\ref{eq:accrassumption}),
    if and only if $A$ is bounded and strictly accretive on $\nul(\curl_\ta)$.
   Moreover $\hat{\hat A}=A$.
\end{prop}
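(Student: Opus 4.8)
I would prove Proposition~\ref{prop:ahataccr} by exploiting the algebraic relation $\hat A = \uA\,\oA^{-1}$ together with the two bijections on $\nul(\curl_\ta) = \clos{\ran(D)}$ that are implicit in the setup. The key observation is that the maps $\oA$ and $\uA$ act on $\nul(\curl_\ta)$ in a controlled way: $\oA$ preserves $\nul(\curl_\ta)$ (since $(\oA F)_\ta = F_\ta$, so the tangential part is unchanged and its curl is unaffected), and $\oA$ is invertible on all of $\C^{(1+n)m}$ because $A_{00}$ is pointwise strictly accretive hence pointwise invertible. The map $\uA$, on the other hand, is \emph{not} invertible in general, but I claim $\uA$ is a bounded bijection from $\nul(\curl_\ta)$ onto $\nul(\curl_\ta)$ with bounded inverse, precisely \emph{because} $A$ is accretive on $\nul(\curl_\ta)$; this is where the hypothesis enters.

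\textbf{Step 1: the pairing identity.} For $f, g \in \nul(\curl_\ta)$, I would compute the sesquilinear form $(\hat A\, \oA f, \oA g)$ and show it equals $(A f, g)$, or more precisely relate $\re(\hat A h, h)$ for $h \in \nul(\curl_\ta)$ to $\re(A \oA^{-1}h, \oA^{-1}h)$. Using (\ref{eq:uoa}), namely $\oA F = (AF)_0 + F_\ta$ and $\uA F = F_0 + (AF)_\ta$, together with $\hat A = \uA\oA^{-1}$, one gets for $h = \oA F$ that $\hat A h = \uA F = F_0 + (AF)_\ta$, while $h = (AF)_0 + F_\ta$. Hence $(\hat A h, h) = (F_0 + (AF)_\ta,\ (AF)_0 + F_\ta) = (F_0, (AF)_0) + ((AF)_\ta, F_\ta) + (\text{cross terms})$. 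The cross terms $(F_0, F_\ta)$ and $((AF)_\ta, (AF)_0)$ — wait, these are inner products of a purely-normal and a purely-tangential vector, hence \emph{zero} by orthogonality of the normal/tangential splitting. So $(\hat A h, h) = (F_0, (AF)_0) + ((AF)_\ta, F_\ta) = \conj{((AF)_0, F_0)} + ((AF)_\ta, F_\ta)$. Taking real parts, $\re(\hat A h, h) = \re((AF)_0, F_0) + \re((AF)_\ta, F_\ta) = \re(AF, F)$, where $F = \oA^{-1} h$. That is the whole algebraic miracle, and it is an \emph{exact} identity, not an estimate.

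\textbf{Step 2: transfer of accretivity.} Given Step 1, suppose $A$ is strictly accretive on $\nul(\curl_\ta)$ with constant $\kappa$. For $h \in \nul(\curl_\ta)$, since $\oA$ preserves $\nul(\curl_\ta)$ and is invertible, $F := \oA^{-1} h$ also lies in $\nul(\curl_\ta)$, so $\re\int(\hat A h, h) = \re\int(AF,F) \ge \kappa\|F\|_2^2 = \kappa\|\oA^{-1}h\|_2^2 \ge \kappa \|\oA\|_\infty^{-2}\|h\|_2^2$, giving strict accretivity of $\hat A$ on $\nul(\curl_\ta)$ with constant $\kappa\|\oA\|_\infty^{-2}$. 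Boundedness of $\hat A$ is immediate from the explicit block formula together with $\|A_{00}^{-1}\|_\infty \le \kappa^{-1}$. The converse direction is symmetric once one knows $\hat{\hat A} = A$, so I would prove the involution identity next.

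\textbf{Step 3: the involution $\hat{\hat A} = A$.} This is a direct computation from the block formula for $\hat A$. Writing $\hat A = \begin{bmatrix} A_{00}^{-1} & -A_{00}^{-1}A_{0\ta} \\ A_{\ta 0}A_{00}^{-1} & A_{\ta\ta} - A_{\ta 0}A_{00}^{-1}A_{0\ta}\end{bmatrix}$, one has $(\hat A)_{00} = A_{00}^{-1}$, so $(\hat A)_{00}^{-1} = A_{00}$; then $\hat{\hat A}_{00} = (\hat A)_{00}^{-1} = A_{00}$, $\hat{\hat A}_{0\ta} = -(\hat A)_{00}^{-1}(\hat A)_{0\ta} = -A_{00}\cdot(-A_{00}^{-1}A_{0\ta}) = A_{0\ta}$, similarly $\hat{\hat A}_{\ta 0} = (\hat A)_{\ta 0}(\hat A)_{00}^{-1} = A_{\ta 0}A_{00}^{-1}\cdot A_{00} = A_{\ta 0}$, and finally $\hat{\hat A}_{\ta\ta} = (\hat A)_{\ta\ta} - (\hat A)_{\ta 0}(\hat A)_{00}^{-1}(\hat A)_{0\ta} = (A_{\ta\ta} - A_{\ta 0}A_{00}^{-1}A_{0\ta}) - (A_{\ta 0}A_{00}^{-1})A_{00}(-A_{00}^{-1}A_{0\ta}) = A_{\ta\ta} - A_{\ta 0}A_{00}^{-1}A_{0\ta} + A_{\ta 0}A_{00}^{-1}A_{0\ta} = A_{\ta\ta}$. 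Hence $\hat{\hat A} = A$. Alternatively, and more slickly, $\hat A = \uA\oA^{-1}$ and one checks that the $\ \widehat{\cdot}\ $ operation corresponds to swapping the roles of $\oA$ and $\uA$; since $\oA, \uA$ are recovered from $A$ by reading off blocks, applying the hat twice returns $A$.

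\textbf{Main obstacle.} The only genuinely delicate point is making sure the orthogonality argument in Step 1 is airtight — that the cross terms really vanish. This is clear pointwise in $\C^{(1+n)m} = \C^m \oplus \C^{nm}$ because a vector supported in the normal slot is orthogonal to one supported in the tangential slot, and integrating preserves this. But one must be slightly careful that $(AF)_0$ means the normal \emph{part} of $AF$ (a genuine element of the normal subspace) and similarly $(AF)_\ta$; with that reading the decomposition $(\hat A h, h) = ((AF)_0, F_0) + ((AF)_\ta, F_\ta)$ holds verbatim because the two subspaces are orthogonal complements. A secondary (but trivial) point: one should note $\nul(\curl_\ta) = \clos{\ran D}$ is a closed subspace and $\oA$, being a bounded multiplication operator preserving tangential parts, maps it boundedly and bijectively onto itself — so no domain issues arise, and the quantitative constants track through cleanly as indicated. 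I would present Steps 1–3 in that order, with Step 1 as the heart of the argument.
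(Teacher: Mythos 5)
Your proof is essentially the paper's proof: the algebraic identity $\re(\hat A\,\oA f,\oA f)=\re(\uA f,\oA f)=\re(Af,f)$ is exactly the computation the authors use, together with $\oA$ being an isomorphism of $\nul(\curl_\ta)$ (since it fixes tangential parts) and the involution $\hat{\hat A}=A$ for the converse. One small caveat: the claim in your preamble that $\uA$ is a bijection of $\nul(\curl_\ta)$ onto itself is generally false --- $(\uA f)_\ta=(Af)_\ta$ need not be curl-free --- and indeed the paper explicitly notes $\uA$ need not be invertible; fortunately you never use that claim, so the argument stands.
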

\begin{proof}
 Assume that $A$ is bounded and strictly accretive on $\nul(\curl_\ta)$.
 As noted above, $\oA$ is invertible and thus $\hat A$ is bounded. 
 Since $\oA$ acts as identity on tangential vector fields, it is clear that
 $\oA:\nul(\curl_\ta)\rightarrow\nul(\curl_\ta)$ is an isomorphism.
 Strict accretivity of $\hat A$ on $\nul(\curl_\ta)$ now follows from the formula
\begin{multline*}
\re(\hat A (\oA f),\oA f) = \re(\uA f,\oA f) = \re\big( (f_0, A_{00} f_0 + A_{0\ta} f_\ta) + (A_{\ta 0}f_0+ A_{\ta\ta}f_\ta, f_\ta) \big) \\
= \re\big( (A_{00} f_0 + A_{0\ta} f_\ta, f_0) + (A_{\ta 0}f_0+ A_{\ta\ta}f_\ta, f_\ta) \big) = \re (A f,f).
\end{multline*}
The identity $\hat{\hat A}=A$ is straightforward to verify, and this shows that the above 
argument is reversible.
\end{proof}
We are now in a position to analyze the operator $T_A$. Due to (\ref{eq:similarity}),
it suffices to study operators of the form $DB$ in $L_2(\R^n; \C^N)$, where 
$D$ is a self-adjoint homogeneous first order differential operator with constant coefficients,
and $B$ is a bounded multiplication operator which is strictly accretive on
$\ran(D)$, i.e.~there exists $\kappa>0$ such that
\begin{equation}  \label{eq:accronrange}
   \re(BDu,Du)\ge \kappa \|Du\|_2,\qquad\text{for all } u\in \dom(D).
\end{equation}
The applications we have in mind are the specific operators $D$ and $B=\hat A$ from 
Definition~\ref{defn:hattransforms}, in which case $\clos{\ran(D)}= \nul(\curl_\ta)$,
as well as generalizations of these 
in Section~\ref{sec:forms}.

Define closed and open sectors and double sectors in the complex plane by
\begin{alignat*}{2}
    S_{\omega+} &:= \sett{z\in\C}{|\arg z|\le\omega}\cup\{0\}, 
    & \qquad
    S_{\omega} &:= S_{\omega+}\cup(-S_{\omega+}), \\    
    S_{\nu+}^o &:= \sett{z\in\C}{ z\ne 0, \, |\arg z|<\nu},  
    & \qquad  
    S_{\nu}^o &:= S_{\nu+}^o\cup(- S_{\nu+}^o),
\end{alignat*}
and define the {\em angle of accretivity} of $B$ to be
$$
   \omega:= \sup_{f\not = 0,f\in\ran(D)} |\arg(Bf,f)|  <\pi/2.
$$
\begin{prop}   \label{prop:typeomega}
Let $D$ be a self-adjoint operator and let $B$ be a bounded operator in $L_2(\R^n; \C^N)$
which satisfies (\ref{eq:accronrange}).
\begin{itemize}
\item[{\rm (i)}]
The operator $DB$ is a closed and densely defined $\omega$-bisectorial operator, i.e.~$\sigma(DB)\subset S_\omega$, where
$\omega$ is the angle of accretivity of $B$.
Moreover, there are resolvent bounds 
$\|(\lambda I - DB)^{-1}\| \lesssim 1/ \dist(\lambda, S_\omega)$ when $\lambda\notin S_\omega$.
\item[{\rm (ii)}]
The operator $DB$ has range $\ran(DB)=\ran(D)$ and null space $\nul(DB)=B^{-1}\nul(D)$,
where $B^{-1}$ denotes the inverse image, such that
$$
   L_2(\R^n; \C^N) = \clos{\ran(DB)} \oplus \nul(DB)
$$
topologically (but in general non-orthogonally).
\item[{\rm (iii)}]
The restriction of $DB$ to $\clos{\ran(D)}=\clos{\ran(DB)}$ 
is a closed and injective operator with dense range in
$\clos{\ran(D)}$, with estimates on spectrum and resolvents as in (i).
\end{itemize}
\end{prop}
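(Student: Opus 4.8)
The plan is to reduce everything to known perturbation theory for self-adjoint operators combined with the accretivity hypothesis \eqref{eq:accronrange}. First I would record the basic algebraic facts. Since $D$ is self-adjoint, $L_2 = \clos{\ran(D)}\oplus\nul(D)$ orthogonally, and $B$ maps $\clos{\ran(D)}$ isomorphically onto a closed complement of $\nul(D)$: indeed \eqref{eq:accronrange} gives $\re(Bf,f)\ge\kappa\|f\|_2$ for $f\in\ran(D)$, hence by continuity for $f\in\clos{\ran(D)}$, so $B|_{\clos{\ran(D)}}$ is injective with closed range, and $B\clos{\ran(D)}\cap\nul(D)=\{0\}$ while a dimension/codimension count (or a direct argument using $B^*$, which is accretive on $\clos{\ran(D)}$ as well since $\ran(D)$ is $D$-reducing) shows $L_2 = B\clos{\ran(D)}\oplus\nul(D)$ topologically. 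From this the statements in (ii) about $\ran(DB)=\ran(D)$ and $\nul(DB)=B^{-1}\nul(D)$ are immediate, and the topological splitting $L_2=\clos{\ran(DB)}\oplus\nul(DB)$ follows by applying $B^{-1}$ to $L_2 = B\clos{\ran(D)}\oplus\nul(D)$ and noting $\clos{\ran(DB)}=\clos{\ran(D)}$.

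Next I would prove the resolvent bounds in (i), which is the technical heart. Fix $\lambda\notin S_\omega$ and $u\in\dom(DB)=B^{-1}\dom(D)$; write $f=Bu$. Decompose $u=u_1+u_0$ with $u_1\in\clos{\ran(D)}$, $u_0\in\nul(DB)$, so that $(\lambda I-DB)u = \lambda u_1 + \lambda u_0 - DBu_1$. Pairing $(\lambda I-DB)u$ appropriately — the standard trick is to test against $Bu_1$ after splitting off the $\nul(DB)$-component, using that $DBu\in\ran(D)$ is orthogonal to $\nul(D)\ni$ nothing directly, so one instead works on the quotient — one obtains, after writing $g:=Bu_1\in\clos{\ran(D)}$ modulo $\nul(D)$ and using self-adjointness of $D$, an estimate of the form $\dist(\lambda,S_\omega)\,\|u_1\|_2 \lesssim \|(\lambda I-DB)u\|_2$ and separately $|\lambda|\,\|u_0\|_2\lesssim\|(\lambda I-DB)u\|_2$. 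The key algebraic input is that for $g\in\ran(D)$ one has $\arg(Bg,g)\in[-\omega,\omega]$, so $\lambda - (DBg,?)$-type quantities stay in a region where $|\lambda z - w|\gtrsim\dist(\lambda,S_\omega)|z|$ whenever $w/z\in S_\omega$; concretely one uses the elementary inequality $|\lambda - \zeta|\ge\dist(\lambda,S_\omega)$ for all $\zeta\in S_\omega$ together with the accretivity to compare $(DBu_1,v)$ against $\|u_1\|\|v\|$ up to the sectoriality angle. This gives injectivity of $\lambda I-DB$ with the quantitative bound; surjectivity follows since $DB$ is closed (as $B$ is bounded invertible on the relevant subspace and $D$ is closed) and the same estimate applied to the adjoint $B^*D$ — which satisfies the dual hypotheses — rules out cokernel. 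Assembling, $\sigma(DB)\subset S_\omega$ with $\|(\lambda I-DB)^{-1}\|\lesssim 1/\dist(\lambda,S_\omega)$.

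Finally, part (iii) is a corollary: the splitting in (ii) is invariant under $DB$, the restriction of $DB$ to $\nul(DB)$ is zero, and on $\clos{\ran(D)}$ the operator $DB$ is injective with dense range (its range is $\ran(D)$, dense in $\clos{\ran(D)}$) and inherits the resolvent estimates from (i) since the resolvent respects the direct sum decomposition. The main obstacle I anticipate is getting the resolvent estimate with the correct geometric constant $\omega$ rather than a crude one: this requires handling the interaction between the non-orthogonal splitting $L_2=\clos{\ran(DB)}\oplus\nul(DB)$ and the pairing argument carefully, essentially mimicking the classical treatment of operators $DB$ with $B$ accretive (as in the Kato-problem literature), and in particular checking that one may restrict attention to $\clos{\ran(D)}$ where the numerical range of "$B$ seen through $D$" is genuinely contained in $S_\omega$. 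Once that localization is in place, the estimates are routine.
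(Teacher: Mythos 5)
Your overall strategy is the same as the paper's---establish the topological splitting (ii) first, reduce to $\clos{\ran(D)}$, and prove the resolvent bound there by pairing $(\lambda I-DB)u$ against $Bu$ and exploiting self-adjointness of $D$---so the skeleton is right. The splitting in (ii) can indeed be obtained your way (show $L_2 = B\clos{\ran(D)}\oplus\nul(D)$ and pull back), though the paper takes the slightly slicker route of proving the analogous splitting $L_2 = \nul(D)\oplus B^*\clos{\ran(D)}$ for the adjoint and then passing to orthogonal complements, which avoids any discussion of what ``applying $B^{-1}$'' means when $B$ is not globally injective.

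The genuine problem is your formulation of the resolvent estimate, which as written does not go through. You say ``after writing $g:=Bu_1\in\clos{\ran(D)}$ modulo $\nul(D)$'' and propose to use an inequality $|\lambda z - w|\gtrsim\dist(\lambda,S_\omega)|z|$ ``whenever $w/z\in S_\omega$.'' Neither of these is what is happening. First, $Bu_1$ has no reason to lie in $\clos{\ran(D)}$, and no such membership (nor any quotient construction) is needed anywhere. Second, if you set $z=(Bu,u)$ and $w=(Bu,DBu)$, then $w$ is \emph{real} (because $D=D^*$ makes $(Bu,D(Bu))\in\R$), while $z\in S_\omega$ with $|z|\gtrsim\|u\|^2$; but $w/z$ is a real number divided by an element of $S_\omega$, so it need not lie in $S_\omega$, and your inequality does not apply. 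The correct mechanism is simpler and sharper: take $u\in\clos{\ran(D)}\cap\dom(DB)$, $f=(\lambda I-DB)u$, and compute $\im(Bu,\lambda u-DBu)=\im(Bu,f)$; the term $(Bu,DBu)$ drops out entirely upon taking imaginary parts because it is real, leaving $\im(\conj\lambda\,(Bu,u))=\im(Bu,f)$, and then the lower bound on $|\im(\conj\lambda\, z)|$ for $z\in S_\omega$, $\lambda\notin S_\omega$, gives $\dist(\lambda,S_\omega)\|u\|^2\lesssim\|u\|\|f\|$. Your instinct to decompose $u=u_1+u_0$ also adds noise: since $DB=(DB)|_{\clos{\ran(D)}}\oplus 0$ in the splitting (ii), it suffices to run the estimate for $u\in\clos{\ran(D)}$ and there is no quotient or $u_0$-component to track. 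Once the estimate is stated this way, closedness, the full resolvent bound, and part (iii) follow as you indicate.
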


\begin{proof}
   As a consequence of (\ref{eq:accronrange}), it is verified that $DB$ and $B^*D$
   are closed and desely defined adjoint unbounded operators, and the
   topological splitting 
$$
   L_2(\R^n;\C^m)= \nul(D)\oplus B^* \clos{\ran(D)}
$$
 follows from perturbing the orthogonal splitting $L_2=\nul(D)\oplus \clos{\ran(D)}$
 with $B^*$, which satisfies (\ref{eq:accronrange}) as well.
 Since $\clos{\ran(DB)} = \nul(B^*D)^\perp= \nul(D)^\perp= \clos{\ran(D)}$ and
 $\nul(DB)=\ran(B^*D)^\perp=(B^*\ran(D))^\perp$, the splitting (ii) for $DB$
 follows, by stability of splittings under taking orthogonal complements.

 Since $DB=(DB)|_{\clos{\ran(D)}}\oplus 0$ in the splitting (ii),
 it suffices to prove resolvent bounds for $DB$ on $\clos{\ran(D)}$.
 To this end, let $u\in \clos{\ran(D)}\cap \dom(DB)$ and $f= (\lambda I- DB)u$.
 Then
$$
  \im (Bu, \lambda u- DBu) =\im (Bu, f).
$$
Since $D$ is self-adjoint, $(Bu, DBu)\in\R$ and we get an estimate $| \im(\conj\lambda(Bu, u)) |\lesssim \|u\| \|f\|$,
from which the resolvent bound $\|u\| \lesssim \|f\| / \dist(\lambda, S_\omega)$ follows.
\end{proof}

These properties of closed operators of the form $DB$ have been known for
some time, see for example \cite{ADMc} and \cite{CDMcY}, at least in the case
when $B$ is strongly accretive.
The following theorem has also been known for some time in the case when $D$ is injective,  
as it derives from the special case $D= -i d/dx$ developed in \cite{McQ}
 (see also Lecture 8 of \cite{ADMc}). In this case $DB$ is similar to the
operator $\frac{d}{dz}|_\gamma$ of differentiation on a Lipschitz graph
$\gamma$, and the boundedness of $\sgn(DB)$ is equivalent to the
boundedness of the Cauchy singular integral on $\gamma$, proved
originally by Calder\'on when $B-I$ is sufficiently small \cite{Ca}, and in
general by Coifman, McIntosh and Meyer~\cite{CMcM}.

The proof of the following theorem however is more involved when $D$ is not injective.
In the general case it was proved in \cite[Theorem 3.1(iii)]{AKMc},
building on results for the Kato problem by
Auscher, Hofmann, Lacey, McIntosh and Tchamitchian~\cite{AHLMcT}.
It is also possible to give a direct proof, as shown in \cite{elAAM}.
\begin{thm}    \label{thm:qestsfordb}
Let $D$ be a self-adjoint homogeneous first order differential operator with constant coefficients
such that 
$$
   \|Df\|\gtrsim \|\nabla f\|\qquad \text{for all }  f\in \clos{\ran(D)}\cap\dom(D),
$$
and let $B$ be a bounded multiplication operator in $L_2(\R^n; \C^N)$
which satisfies (\ref{eq:accronrange}).
\begin{itemize}
\item[{\rm (i)}]
The operator $DB$ satisfies quadratic estimates
$$
  \int_0^\infty \| tDB (1+ (tDB)^2)^{-1} f \|^2\frac{dt}t \approx \|f\|^2, 
  \qquad \text{for all } f\in\clos{\ran(D)}.
$$
\item[{\rm (ii)}]
The operator $DB$ has a bounded holomorphic functional calculus in $\clos{\ran(D)}$, i.e.~for each bounded holomorphic function $b(z)$ on a double sector 
$S_\nu^o$, $\omega<\nu<\pi/2$, the operator $b(DB)$ in $\clos{\ran(D)}$ is bounded with estimates
$$
   \|b(DB)\|_{ L_2 \rightarrow L_2} \lesssim \| b\|_{L_\infty(S_\nu^0)}.
$$
\end{itemize}
\end{thm}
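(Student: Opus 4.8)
The plan is to obtain this from the general quadratic-estimate theorem for perturbed first order differential operators, specialized to the present situation, rather than to redo the harmonic analysis. For part (i): the hypotheses imposed on $D$ (self-adjoint, homogeneous of order one, constant coefficients, together with the coercivity hypothesis $\|Df\|\gtrsim\|\nabla f\|$ on $\clos{\ran(D)}\cap\dom(D)$) and on $B$ (a bounded multiplication operator strictly accretive on $\ran(D)$, that is, (\ref{eq:accronrange})) are exactly the standing hypotheses of \cite[Theorem~3.1]{AKMc}, which establishes precisely the stated quadratic estimate and which in turn rests on the solution of the Kato square root problem \cite{AHLMcT}. So, once the notation is matched, there is nothing further to prove for (i); I describe the mechanism below.

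For part (ii), I would invoke McIntosh's theorem (see, e.g., \cite{ADMc}): a bisectorial operator on a Hilbert space with dense range has a bounded holomorphic functional calculus on $S_\nu^o$ for every $\nu$ exceeding its angle of bisectoriality, with $\|b(DB)\|_{L_2\to L_2}\lesssim\|b\|_{L_\infty(S_\nu^o)}$, precisely when it satisfies a two-sided quadratic estimate. That $DB$ is $\omega$-bisectorial on $\clos{\ran(D)}=\clos{\ran(DB)}$, where $\omega$ is the accretivity angle of $B$, with resolvent bounds on $\C\setminus S_\omega$, is Proposition~\ref{prop:typeomega}; the required two-sided estimate (for the admissible function $z\mapsto z(1+z^2)^{-1}$) is part (i). Hence (ii) holds for every $\nu\in(\omega,\pi/2)$.

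The mechanism behind (i), for a self-contained treatment in the spirit of \cite{elAAM}, is the now-standard scheme for Kato-type estimates, run in the graded framework $D=\Gamma+\Gamma^*$ of \cite{AKMc} (the operator $D$ of Definition~\ref{defn:hattransforms} is already of this form, with $\Gamma$ the tangential-gradient block): one (a) upgrades the resolvent bounds of Proposition~\ref{prop:typeomega} to $L_2$ off-diagonal (Gaffney) estimates for the resolvent family $(I+itDB)^{-1}$; (b) reduces the square function $\int_0^\infty\|\theta_t^B f\|^2\,\tfrac{dt}{t}$, with $\theta_t^B:=tDB(1+(tDB)^2)^{-1}$, to its principal part (the multiplier $\gamma_t(x)$ obtained by acting with $\theta_t^B$ on constants) composed with a dyadic averaging operator, the error being absorbed by Poincar\'e inequalities on $\clos{\ran(D)}$, which is exactly where the hypothesis $\|Df\|\gtrsim\|\nabla f\|$ is used; and (c) proves that $|\gamma_t(x)|^2\,\tfrac{dt\,dx}{t}$ is a Carleson measure. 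Step (c) is the genuine obstacle: it is carried out by a stopping-time and $T(b)$ argument, testing against functions adapted to each dyadic cube $Q$ in an arbitrary direction $w\in\C^N$ and manufactured from resolvents of $DB$ applied to bumps, and it is there that the accretivity (\ref{eq:accronrange}) and the Kato estimates are used decisively; I would not reproduce it, but cite \cite{AKMc}. Finally, the theorem applies to the pair $D$, $B=\hat A$ of Definition~\ref{defn:hattransforms}, for which $\clos{\ran(D)}=\nul(\curl_\ta)$: there the coercivity $\|Df\|\gtrsim\|\nabla f\|$ follows from the Riesz transform bound $\|\nabla_x^2 w\|\approx\|\Delta w\|$ applied to curl-free tangential parts $f_\ta=\nabla_x w$, and strict accretivity of $\hat A$ on $\nul(\curl_\ta)$ is Proposition~\ref{prop:ahataccr}.
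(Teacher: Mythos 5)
Your proposal is correct and takes essentially the same route as the paper: the paper itself gives no proof here, but simply cites \cite[Theorem~3.1(iii)]{AKMc} for the quadratic estimate (i), and (ii) is then the standard consequence of a two-sided quadratic estimate for an injective bisectorial operator with dense range (Proposition~\ref{prop:typeomega} combined with the theory in \cite{ADMc}), exactly as you argue. Your sketch of the mechanism behind (i) and your verification that the paper's $D$ satisfies the coercivity hypothesis via second-order Riesz transforms are accurate, but are background the paper leaves to \cite{AKMc} and \cite{elAAM}.
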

For the precise definition of operators $b(DB)$ in the functional calculus of $DB$ we refer to
\cite{ADMc}.
Note that the map $H_\infty(S_\nu^o)\ni b \mapsto b(DB)\in \mL(\clos{\ran(D)})$
is a continuous algebra homomorphism.

We now return to the operator $T_A$ of Definition~\ref{defn:hattransforms}.
Note that the isomorphism $\oA$ in (\ref{eq:similarity}) 
maps the subspace $\clos{\ran(D)}$ onto itself. 
\begin{defn}    \label{defn:Hspace}
  Let $\mH$ denote the closed subspace
$$
 \mH:= \clos{\ran(D)}=\nul(\curl_\ta)=\sett{f\in L_2(\R^n;\C^{(1+n)m})}{\curl_x f_\ta=0}.
$$
of $L_2(\R^n;\C^{(1+n)m})$
\end{defn}
In this notation, Proposition~\ref{prop:typeomega} and Theorem~\ref{thm:qestsfordb} with $B=\hat A$, thus have 
the following corollary.
\begin{cor}    \label{cor:fcalcta}
  The operator $T_A$ from Definition~\ref{defn:hattransforms} is an $\omega$-bisectorial 
  operator in $L_2(\R^n;\C^{(1+n)m})$, where $\omega$ is the angle of accretivity of $\hat A$.
Furthermore we have a splitting 
$$
L_2(\R^n;\C^{(1+n)m}) =\clos{\ran(T_A)}\oplus \nul(T_A) = \mH\oplus \sett{[0, f_\ta]^t}{\divv_x A_{\ta\ta}f_\ta=0}
$$
in which $T_A=T_A|_{\mH} \oplus 0$.
The restriction of $T_A$ to $\mH$ is an injective operator with dense range in $\mH$, 
which satisfies quadratic estimates
and has a bounded holomorphic functional calculus.
\end{cor}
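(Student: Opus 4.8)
The plan is to deduce every assertion from the similarity $T_A=\oA^{-1}(D\hat A)\oA$ of (\ref{eq:similarity}): first check that the pair $(D,\hat A)$ meets the hypotheses of Proposition~\ref{prop:typeomega} and Theorem~\ref{thm:qestsfordb}, then transport the conclusions through the bounded, boundedly invertible multiplication operator $\oA$. For the hypotheses, Proposition~\ref{prop:ahataccr} shows that $\hat A$ is bounded and strictly accretive on $\nul(\curl_\ta)=\clos{\ran(D)}$, so $B=\hat A$ satisfies (\ref{eq:accronrange}) with some angle of accretivity $\omega<\pi/2$. It remains to verify the Poincar\'e-type bound $\|Df\|\gtrsim\|\nabla f\|$ for $f\in\clos{\ran(D)}\cap\dom(D)$ required in Theorem~\ref{thm:qestsfordb}. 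For the $D$ of Definition~\ref{defn:hattransforms} we have $Df=[\divv_x f_\ta,-\nabla_x f_0]^t$ and $\|\nabla f\|^2=\|\nabla_x f_0\|^2+\|\nabla_x f_\ta\|^2$, so the claim reduces to $\|\nabla_x f_\ta\|\lesssim\|\divv_x f_\ta\|$ whenever $\curl_x f_\ta=0$; this is immediate on the Fourier transform side, where a curl-free field has $\widehat{f_\ta}(\xi)=\xi\,h(\xi)$ for a scalar $h$, so that in fact $\|\nabla_x f_\ta\|=\|\,|\xi|^2 h\,\|=\|\divv_x f_\ta\|$.

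With the hypotheses checked, Proposition~\ref{prop:typeomega} gives that $D\hat A$ is closed, densely defined and $\omega$-bisectorial with resolvent bounds $\|(\lambda I-D\hat A)^{-1}\|\lesssim1/\dist(\lambda,S_\omega)$, that $L_2=\clos{\ran(D\hat A)}\oplus\nul(D\hat A)=\clos{\ran(D)}\oplus\hat A^{-1}\nul(D)$ topologically, and that $(D\hat A)|_{\clos{\ran(D)}}$ is injective with dense range; and Theorem~\ref{thm:qestsfordb}, applicable precisely because of the Poincar\'e bound just verified, gives the quadratic estimates and the bounded holomorphic functional calculus of $D\hat A$ on $\clos{\ran(D)}$. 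Since $\oA$ is a bounded multiplication operator with bounded inverse (as $A_{00}$ is pointwise strictly accretive) that restricts to an isomorphism of $\mH=\clos{\ran(D)}$ onto itself, being the identity on tangential parts (cf.\ the remark before Definition~\ref{defn:Hspace}), all of this transfers to $T_A$: $\sigma(T_A)=\sigma(D\hat A)\subset S_\omega$ with resolvent bounds off $S_\omega$ worsened only by the factor $\|\oA\|\,\|\oA^{-1}\|$; $\clos{\ran(T_A)}=\oA^{-1}\clos{\ran(D\hat A)}=\mH$, while $\nul(T_A)=\uA^{-1}\nul(D)$, and since $\nabla_x f_0=0$ forces $f_0=0$ in $L_2$ this null space equals $\sett{[0,f_\ta]^t}{\divv_x(A_{\ta\ta}f_\ta)=0}$ as claimed, with $T_A=T_A|_\mH\oplus0$ in the splitting; the functional calculus transfers via $b(T_A)=\oA^{-1}b(D\hat A)\oA$ with $\|b(T_A)\|\lesssim\|b\|_{L_\infty(S_\nu^o)}$; and the quadratic estimate $\int_0^\infty\|tT_A(1+(tT_A)^2)^{-1}f\|^2\,\frac{dt}{t}\approx\|f\|^2$ on $\mH$ follows by applying that for $D\hat A$ to $\oA f$ and using $\|\oA f\|\approx\|f\|$. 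Finally, bisectoriality on all of $L_2(\R^n;\C^{(1+n)m})$ is immediate from the topological splitting together with $0\in S_\omega$.

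The genuinely analytic ingredient — the quadratic estimate for $D\hat A$ — is exactly Theorem~\ref{thm:qestsfordb}, which we are entitled to assume, so the proof of the corollary is essentially bookkeeping once that theorem is in hand. The two small points that still need care are the verification of $\|Df\|\gtrsim\|\nabla f\|$ on $\clos{\ran(D)}$ for this particular first-order operator $D$ (needed in order to invoke Theorem~\ref{thm:qestsfordb} legitimately) and the identification of $\nul(T_A)$ after transporting through $\oA$; neither is deep, but both must be done.
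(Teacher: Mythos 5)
Your proof is correct and follows exactly the route the paper takes: the paper states the corollary as an immediate consequence of Proposition~\ref{prop:typeomega} and Theorem~\ref{thm:qestsfordb} with $B=\hat A$ together with the similarity (\ref{eq:similarity}), without spelling out the verification of the hypothesis $\|Df\|\gtrsim\|\nabla f\|$ on $\clos{\ran(D)}$, the transport of resolvent bounds, quadratic estimates and functional calculus through the isomorphism $\oA$, or the identification $\nul(T_A)=\uA^{-1}\nul(D)=\sett{[0,f_\ta]^t}{\divv_x(A_{\ta\ta}f_\ta)=0}$. Your write-up supplies precisely these omitted details, all of which are correct (in particular the Fourier-side computation showing $\|\nabla_x f_\ta\|=\|\divv_x f_\ta\|$ for curl-free $f_\ta$).
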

Note that the restriction of $T_A$ to $\mH$, which we continue to denote by $T_A$, coincides
with the operator $T_A$ used in \cite{AAH} for $m=1$.

Of importance to us are the following operators, which are bounded
operators in $\mH$ because they belong to the functional calculus of $T_A$.
\begin{itemize}
\item
The characteristic functions
$$
\chi^\pm(z)= \left\{ \begin{array}{ll}
1    & \quad \textrm{if $\pm \re z>0$}\\
0    & \quad \textrm{if $\pm \re z< 0$} \\
\end{array} \right.
$$
which give the generalised {\em Hardy projections} $E^\pm_A := \chi^\pm(T_A)$.
\item
The signum function
$\sgn(z)= \chi^+(z) -\chi^-(z)$
which gives the generalised {\em Cauchy operator} $E_A := \sgn(T_A)= E_A^+-E_A^-$.
\item
The exponential functions $e^{-t|z|}$, $t>0$, which give the operators $e^{-t|T_A|}$.
Note that $|z|:= z\sgn(z)$ does not denote absolute value for non real $z$, but $z\mapsto|z|$ 
is holomorphic on $S^o_{\pi/2}$.
\end{itemize}
Note that the quadratic estimates in Corollary~\ref{cor:fcalcta} can be written
as 
\begin{equation}   \label{eq:genqest}
   \int_0^\infty \|\psi(tT_A)f\|^2 \frac {dt}t\approx \|f\|^2,\qquad f\in \clos{\ran(D)},
\end{equation}
where $\psi(z):= z/(1+z^2)$.
The estimate $\lesssim$ remains valid for any holomorphic $\psi(z)$ on $S^o_\nu$ such that
$|\psi(z)|\lesssim \min(|z|^\alpha, |z|^{-\alpha})$ for some $\alpha>0$.
If furthermore $\psi|_{S^o_{\nu+}}\ne 0$ and $\psi|_{S^o_{\nu-}}\ne 0$, then 
the estimate $\approx$ holds.
See \cite{ADMc}.

\begin{proof}[Proof of Theorem~\ref{thm:mainhardy}]
By Corollary~\ref{cor:fcalcta}, the infinitesimal generator has a bounded holomorphic
functional calculus.
With the notation introduced above, 
define Hardy type subspaces $E_A^\pm\mH:= \sett{E_A^\pm f}{f\in\mH}$,
so that 
$$
\mH = E_A^+\mH \oplus E_A^-\mH.
$$
In terms of the operator $T_A$, the vector fields $f$ and $F^\pm$ are related as
$$
  f=f^++f^- \longleftrightarrow E^\pm_A f= f^\pm = F^\pm|_{\R^n} 
  \longleftrightarrow e^{\mp t|T_A|} f^\pm = F^\pm.
$$
Indeed, differentiating $F^\pm= e^{\mp t|T_A|}f^\pm$ at $(t,x)\in\R^{1+n}_\pm$, we have
$$
  \pd_t F^\pm= \mp |T_A| F^\pm = -T_A e^{\mp t|T_A|}(\pm E_Af^\pm)
  =-T_A e^{\mp t|T_A|}f^\pm = -T_A F^\pm
$$
since $f^\pm \in E_A^\pm\mH$. Thus $F^\pm$ satisfies (\ref{eq:generator}), 
which we have seen at the beginning of this section is equivalent to
(\ref{eq:Laplacein1order}).
Conversely, each vector field
$F \in C^1(\R_+; L_2(\R^n;\C^{(1+n)m}))$
which satisfies (\ref{eq:generator}) for $t>0$ and has
limits $\lim_{t\rightarrow \infty}F_t=0$ and $\lim_{t\rightarrow 0}F_t=f$
in $L_2$ norm, is of the form $F(t,x)= e^{-t|T_A|}f$ and $f\in E_A^+\mH$.
To see this, split $F_t= F_t^+ + F_t^-$ where $F_t^\pm\in E_A^\pm \mH$.
Applying $E^\pm_A$ to (\ref{eq:generator}) gives the equations
$\pd_t F_t^\pm \pm |T_A| F_t^\pm=0$. Multiplying with suitable exponentials shows that,
for fixed $t>0$,
$e^{(s-t)|T_A|}F_s^+$ is constant for $s\in(0,t)$ and
$e^{(t-s)|T_A|}F_s^-$ is constant for $s\in(t,\infty)$.
Thus $F_t^-=0$ and $F_t^+= e^{-t|T_A|}f$, where $f= \lim_{t\rightarrow 0} F_t$. 
The corresponding result for $F^-$ is proved similarly.

The equivalences of norms follow from the quadratic estimates for $T_A$ as follows.
Boundedness of the complementary projections $E_A^\pm$ shows that
$\|f\|\approx \|f^+\|+\|f^-\|$,
the uniform boundedness of $e^{- t |T_A|}$, $t>0$, shows that 
$\|f^\pm\| \approx \sup_{\pm t>0} \|F^\pm_t\|$, and
the quadratic estimates for $T_A$ shows that
$\tb{t\pd_t F^\pm_{\pm t}}\approx \|f^\pm\|$.
Finally $ \|f^\pm\|\approx \|\widetilde N_* (F^\pm)\|$ 
was proved in \cite[Proposition 2.56]{AAH} for $m=1$.
The extension to divergence form systems is straightforward.

To verify Lipschitz continuity, one shows that the operators
\begin{align}
   f\longmapsto b(T_A)f &: \mH\longrightarrow \mH,   \label{eq:anal1} \\
   f\longmapsto ( b(tT_A)f)_{t>0} &: \mH\longrightarrow L_2((a,b); \mH),  \label{eq:anal2} \\
   f\longmapsto (\psi(tT_A)f)_{t>0} &: \mH\longrightarrow L_2(\R_+, dt/t; \mH)  \label{eq:anal3}
\end{align}
depend analytically on $A$, where $b$ and $\psi$ are bounded holomorphic function on $S^0_\nu$,
$\psi$ decays at $0$ and $\infty$ as in (\ref{eq:genqest}), and the interval $(a,b)$ is finite.
One proceeds similar to \cite[Theorem 6.4]{AKMc} and  \cite[Lemma 2.41]{AAH},
starting from the analyticity of resolvents $A\mapsto (\lambda I- T_A)^{-1}$,
using the quadratic estimates from Corollary~\ref{cor:fcalcta} for the operator $T_A$
and the fact the uniform limits of analytic functions are analytic.

Lipschitz continuity can now be deduced from analyticity. Given $A_1$ and $A_2\in L_\infty(\R^n; \C^{(1+n)m})$
which are strictly accretive on $\mH$, define 
$$
  A(\zeta):= A_1+ \zeta(A_2-A_1)/\|A_2-A_1\|_\infty
$$ 
so that $\zeta\mapsto A(\zeta)$ is analytic in a neighbourhood $\Omega$
of the interval $[0, \|A_2-A_1\|_\infty]$.
Consider the analytic function $\zeta\mapsto A(\zeta)\mapsto F^\pm_t= b(tT_{A(\zeta)})f$, 
where $b(z):= e^{-t|z|}\chi^\pm(z)$,
which has bounds $\|F_t^\pm\|_2\lesssim \|f\|_2$ in $\Omega$.
Thus $\|dF_t^\pm/d\zeta\|_2\lesssim \|f\|_2$ in $\Omega$, from which Lipschitz continuity 
of $A\mapsto F_t$ follows, uniformly for all $t>0$.

Lipschitz continuity of $A\mapsto F_t$ for the square function norm and the norm of the 
non-tangential maximal function are proved similarly,
using analyticity of (\ref{eq:anal3}), with $\psi(z)=ze^{-|z|}\chi^\pm(z)$, and (\ref{eq:anal2}), 
with $b(z)= e^{-t|z|}\chi^\pm(z)$, respectively.
For the non-tangential maximal function, we refer to the proof of \cite[Theorem 1.1]{AAH}
for further details.
\end{proof}

\section{Dirichlet and Neumann boundary value problems}    \label{sec:divbvp}

In this section, we return to the Dirichlet and Neumann BVP's and use 
Theorem~\ref{thm:mainhardy} to prove Theorem~\ref{thm:bvpfordivform}.
We start by translating (Dir-$A$) to an auxiliary Neumann problem (Neu$^\perp$-$A$),
which consists in finding $U$ solving (\ref {eq:divform}) with boundary condition
\begin{itemize}
\item
Neumann problem (Neu$^\perp$-$A$): $-\pd_t U(0,x)=\varphi(x)$, 
where $\varphi\in L_2(\R^n;\C^m)$ is given.
\end{itemize}
More precisely, we use the following first order formulation of well posedness.
\begin{defn}
    We say that the boundary value problem (Neu$^\perp$-$A$)
  is {\em well posed} if for each boundary data $\varphi\in L_2(\R^n;\C^m)$, 
  there exists a unique vector field
$$
  F(t,x)= F_t(x)= \nabla_{t,x}U(t,x) \in C^1(\R_+; L_2(\R^n;\C^{(1+n)m}))
$$
which satisfies (\ref{eq:Laplacein1order}) for $t>0$, and has
limits $\lim_{t\rightarrow \infty}F_t=0$ and $\lim_{t\rightarrow 0}F_t=f$
in $L_2$ norm, where the full boundary trace $f$ satisfies the boundary condition $-f_0=\varphi$.
\end{defn}
\begin{lem}     \label{lem:conjfcnreduction}
  Given $u=-\varphi\in L_2(\R^n;\C^m)$, we have a one-to-one correspondence 
$$
  U(t,x)= F_0(t,x) \longleftrightarrow F(t,x)=  -\int_{t}^\infty \nabla_{s,x}U(s,x)\, ds
$$
between solutions $U(t,x)$ to (Dir-$A$) and solutions $F(t,x)$ to (Neu$^\perp$-$A$).
In particular WP(Dir)= WP(Neu$^\perp$), where
$$
  \text{WP(Neu$^\perp$)} := \sett{A}{\text{(Neu$^\perp$-$A$) is well posed}}
  \subset L_\infty(\R^n; \C^{(1+n)m}).
$$
\end{lem}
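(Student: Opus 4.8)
The plan is to verify that the stated substitution $U \mapsto F = -\int_t^\infty \nabla_{s,x} U\, ds$, together with its inverse $F \mapsto F_0$, sets up a bijection between the solution sets of the two problems, matching boundary data as claimed. First I would check that the map is well defined: given a solution $U$ to (Dir-$A$) in the sense of Definition~\ref{defn:wellposed}(iii), the integral $\int_t^\infty \nabla_{s,x} U_s\, ds$ converges in $L_2$ by hypothesis (the convergence of $\int_{t_0}^{t_1}\nabla_x U_s\, ds$ together with $\nabla_{t,x}U_s \to 0$ at $\infty$ and the integrated form of the equation handle the normal component), so $F_t$ is well defined in $L_2(\R^n;\C^{(1+n)m})$ and lies in $C^1(\R_+;L_2)$ with $\pd_t F_t = \nabla_{t,x} U_t$. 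In particular $F_0(t,x) = -\int_t^\infty \pd_s U(s,x)\, ds = U(t,x)$ since $U_t \to 0$ at $\infty$ and $U_t \to u$ at $0$; this already shows the two maps are mutually inverse at the level of $F_0$, and gives $\lim_{t\to 0} F_{0,t} = u = -\varphi$, i.e.\ the boundary condition $-f_0 = \varphi$ of (Neu$^\perp$-$A$).

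Next I would verify that $F$ so defined solves the first order system (\ref{eq:Laplacein1order}). The tangential part of $F$ is $F_\ta = -\int_t^\infty \nabla_x U_s\, ds$, so $\pd_t F_\ta = \nabla_x U_t = \nabla_x F_0$, which is the second equation $\pd_t F_\ta = \nabla_x F_0$, and also $\curl_x F_\ta = 0$ since $F_\ta$ is (up to the $t$-integral) a gradient in $x$. For the divergence equation one uses precisely the weak form of (\ref{eq:divform}) built into Definition~\ref{defn:wellposed}(iii): the identity $\int_t^\infty ((A\nabla_{s,x}U_s)_\ta, \nabla_x v)\, ds = -((A\nabla_{t,x}U_t)_0, v)$ says exactly that $\pd_t (AF_t)_0 = -\divv_x (A F_t)_\ta$ in the distributional sense, after noting $(A\nabla_{t,x}U_t)_0 = -(A\pd_t F_t)_0$ is compensated correctly by the sign in the definition of $F$; I would track the signs carefully here. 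The decay $\lim_{t\to\infty} F_t = 0$ is immediate from the definition of $F$ as a tail integral, and $\lim_{t\to 0} F_t = f$ exists in $L_2$ by the assumed convergence of $\int_{t_0}^{t_1} \nabla_x U_s\, ds$ (tangential part) and the equation (normal part). Conversely, starting from a solution $F$ to (Neu$^\perp$-$A$), I would set $U := F_0$ and run the same computations backwards: $\pd_t U = \pd_t F_0$, and since $F$ satisfies $\pd_t F_\ta = \nabla_x F_0$ one recovers $F = \nabla_{t,x} U$ (using $\curl_x F_\ta = 0$ to realize $F_\ta$ as a tangential gradient, matching $F_0$), whence $U$ solves (\ref{eq:divform}); the limits and the integrability of $\nabla_x U$ required in Definition~\ref{defn:wellposed}(iii) translate back from the $C^1(\R_+;L_2)$ and decay properties of $F$.

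Having established the bijection with matched boundary data, the identity WP(Dir) = WP(Neu$^\perp$) is then formal: well posedness of (Dir-$A$) means $u \mapsto U$ is a bijection from $L_2(\R^n;\C^m)$ onto the solution set, and the correspondence above converts this into the statement that $\varphi = -u \mapsto F$ is a bijection onto the solution set of (Neu$^\perp$-$A$), i.e.\ well posedness of (Neu$^\perp$-$A$), and vice versa. I expect the main obstacle to be purely bookkeeping: checking that the somewhat delicate integrability and convergence conditions in Definition~\ref{defn:wellposed}(iii) — the convergence of $\int_{t_0}^{t_1}\nabla_x U_s\, ds$ and the weak form of the equation — correspond exactly, with no loss, to the clean $C^1(\R_+;L_2)$-plus-decay requirements on $F$, and getting every sign right in the passage through $\oA$, $\uA$ and the definition $F = -\int_t^\infty \nabla_{s,x} U$. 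There is no hard analysis here; it is a matter of carefully unwinding the two definitions of well posedness against each other.
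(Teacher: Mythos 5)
Your strategy matches the paper's: verify the forward and inverse substitutions directly against Definition~\ref{defn:wellposed}(iii) and the definition of (Neu$^\perp$-$A$), match boundary data, and deduce $WP(Dir)=WP(Neu^\perp)$ from the resulting bijection. The checks you propose for the forward direction (well-definedness of the tail integral from the assumed convergence, $F_0=U$, $\pd_tF_\ta=\nabla_xF_0$, $\curl_xF_\ta=0$, and the weak-form identity giving $\pd_t(AF_t)_0=-\divv_x(AF_t)_\ta$) are exactly the paper's. Two small corrections are needed, though. First, the sign remark is backwards: from $F=-\int_t^\infty\nabla_{s,x}U_s\,ds$ one gets $\pd_tF_t=+\nabla_{t,x}U_t$, hence $(A\nabla_{t,x}U_t)_0=(A\pd_tF_t)_0$ with a plus sign (which is what makes the weak-form identity turn into $\pd_t(AF_t)_0=-\divv_x(AF_t)_\ta$). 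Second, in the converse direction the assertion ``one recovers $F=\nabla_{t,x}U$'' with $U:=F_0$ is off by a $t$-derivative: from $\pd_tU=\pd_tF_0$ and $\nabla_xU=\nabla_xF_0=\pd_tF_\ta$ one gets $\nabla_{t,x}U=\pd_tF$, not $F$. The correct reading is that $F$ is a primitive of $\nabla_{t,x}U$ decaying at infinity, i.e.~$F=\nabla_{t,x}\tilde U$ for the potential $\tilde U$ with $\pd_t\tilde U=U$; the parenthetical about $F_\ta$ being a tangential gradient ``matching $F_0$'' should therefore be dropped — what is needed instead is to verify (as the paper does) that $\nabla_xU=\pd_tF_\ta\in C^0(\R_+;L_2)$, that the required limits and convergence of $\int_{t_0}^{t_1}\nabla_xU_s\,ds$ follow from the decay of $F$ and $t\pd_tF$ (for which one invokes the semigroup representation from Theorem~\ref{thm:mainhardy}), and that the distributional identity for (\ref{eq:divform}) follows by integrating $\pd_s(\nabla_xv,(AF_s)_\ta)$. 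With those two points repaired, the argument is the paper's.
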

\begin{proof}
  Assume that $F$ solves (Neu$^\perp$-$A$) with boundary condition $f_0= u$,
  and let $U:= F_0$.
  Then $U_t\in C^1(\R_+;L_2)$ and $\nabla_{x} U = \pd_t F_\ta\in C^0(\R_+;L_2)$.
  The limits $\lim_{t\rightarrow 0}U_t=u$,
  $\lim_{t\rightarrow \infty}U_t=0$ and $\lim_{t\rightarrow \infty}\nabla_{t,x} U_t=0$ 
  are direct consequences of the limits 
  $\lim_{t\rightarrow 0}F_t=f$, $\lim_{t\rightarrow\infty} F_t= \lim_{t\rightarrow\infty}t\pd_t F_t=0$,
  whereas
$$
  \int_{t_0}^{t_1} \nabla_{x} U_s\, ds= \int_{t_0}^{t_1} \pd_s F_\ta\, ds= 
  F_\ta(t_1)-F_\ta(t_0)\longrightarrow -f_\ta,\qquad (t_0,t_1)\longrightarrow (0,\infty).
$$
  The function $U$ satisfies (\ref {eq:divform}) since
\begin{multline*}
  \int_t^\infty (\nabla_x v, (A\nabla_{s,x}U_s)_\ta)\,ds=
  \int_t^\infty \pd_s(\nabla_x v, (A F_s)_\ta) \, ds  \\ = -(\nabla_x v, (AF_t)_\ta)
  =(v, \divv_x (AF_t)_\ta)= -(v, \pd_t(A F_t)_0)= -(v,(A\nabla_{t,x}U_t)_0).
  \end{multline*}

Conversely, assume that $U$ solves (Dir-$A$) with boundary condition $U|_{\R^n}= -\varphi$,
and let $F(t,x):=  -\int_{t}^\infty \nabla_{s,x}U(s,x)\, ds$.
This gives a well defined function since $F= U-\lim_{t_1\rightarrow \infty}\int_{t}^{t_1} \nabla_x U_s\, ds$,
and $F\in C^1(\R_+; L_2)$.
Clearly, $\lim_{t \rightarrow \infty} F_t=0$ and $\lim_{t\rightarrow 0}F_t= -\varphi -h$,
where $h:= \int_{0}^{\infty} \nabla_x U_s\, ds$.
The vector field $F$ satisfies (\ref{eq:Laplacein1order}) since
$\curl_{x}\int_{t}^\infty \nabla_{x}U(s,x)\, ds=0$, $\pd_t F_\ta=\nabla_x U= \nabla_x F_0$ and
\begin{multline*}
  (v, \divv_x (AF_t)_\ta)=  (\nabla_x v, \int_{t}^\infty (A\nabla_{s,x}U_s)_\ta \, ds) \\
  = \int_t^{\infty} (\nabla_x v, (A \nabla_{s,x}U_s)_\ta)\, ds= -(v, (A \nabla_{t,x}U_t)_0)
  = -(v, \pd_t (AF_t)_0),
\end{multline*}
for all $v\in C^\infty_0(\R^n;\C^m)$, so that $\pd_t(AF_t)_0=-\divv_x(AF_t)_\ta$.
This completes the proof.
\end{proof}

As in the proof of Theorem~\ref{thm:mainhardy}, we denote the upper Hardy type subspace of
$\mH$ by
$$
  E_A^+ \mH = \sett{f}{F\in C^1(\R^{1+n}_+; \mH) \text{ solves } (\ref{eq:generator}) \text{ in } \R^{1+n}_+, \,
  \lim_{t\rightarrow 0^+}\|F_t-f\|_2= \lim_{t\rightarrow \infty}\|F_t\|_2=0}.
$$

\begin{proof}[Proof of Theorem~\ref{thm:bvpfordivform}]
By Theorem~\ref{thm:mainhardy} , the spectral projection $E_A^+= \chi_+(T_A)$ 
onto this subspace is bounded and depends
Lipschitz continuously on $A$.
We now observe that (Reg-$A$), (Neu-$A$)  and (Neu$^\perp$-$A$) are well posed if and only if
\begin{align*}
  E_A^+\mH \longrightarrow \sett{g\in L_2(\R^n;\C^{nm})}{\curl_x g=0} &: f\longmapsto f_\ta, \\
  E_A^+\mH \longrightarrow L_2(\R^n;\C^m) &: f\longmapsto (Af)_0, \\
  E_A^+\mH \longrightarrow L_2(\R^n;\C^m) &: f\longmapsto f_0
\end{align*}
are isomorphisms respectively.
Since $E_A^+$ depends continuously on $A$ by Theorem~\ref{thm:mainhardy}, the following lemma shows
that the sets of well posedness are open. 
\begin{lem}   \label{lem:vardomains}
Let $P_t$ be bounded projections in a Hilbert space $\mH$ which depend continuously on a parameter $t\in(-\delta,\delta)$,
and let $S:\mH\rightarrow\mK$ be a bounded operator into a Hilbert space $\mK$.
If $S: P_0\mH\rightarrow \mK$ is an isomorphism, then there exists $0<\epsilon<\delta$,
such that $S:P_t \mH\rightarrow\mK$ is an isomorphism when $|t|<\epsilon$.
\end{lem}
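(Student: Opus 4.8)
The statement is a standard perturbation fact about Fredholm-type stability of isomorphisms under continuous variation of the domain subspace, and the natural approach is to convert the moving subspaces $P_t\mH$ into a fixed Hilbert space by means of the projections themselves. First I would fix $t$ small and compare $P_t$ with $P_0$: since $t\mapsto P_t$ is continuous at $0$, there is $\epsilon>0$ such that $\|P_t-P_0\|<\eta$ for $|t|<\epsilon$, where $\eta$ will be chosen small depending on the norm of the inverse $(S|_{P_0\mH})^{-1}$. The key elementary observation is that for $\|P_t-P_0\|<1$ the restriction $P_t|_{P_0\mH}:P_0\mH\to P_t\mH$ is an isomorphism, with inverse bounded uniformly in $t$; indeed $P_0(1-(P_0-P_t))=P_0 P_t$ on $P_0\mH$ is invertible by Neumann series, and symmetrically $P_t P_0$ is invertible on $P_t\mH$, so $P_t$ maps $P_0\mH$ bijectively onto $P_t\mH$ with $\|(P_t|_{P_0\mH})^{-1}\|\le (1-\eta)^{-1}$ and likewise for the reverse map $P_0|_{P_t\mH}$.

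Granting this, I would write the candidate inverse of $S:P_t\mH\to\mK$ explicitly. Define $R_t:\mK\to P_t\mH$ by $R_t:= P_t\circ (S|_{P_0\mH})^{-1}$, i.e. first apply the given inverse on $P_0\mH$, then push into $P_t\mH$ by the near-identity isomorphism above. Then for $k\in\mK$ one computes $S R_t k = S(S|_{P_0\mH})^{-1}k + S\big(P_t - P_0\big)(S|_{P_0\mH})^{-1}k = k + S(P_t-P_0)(S|_{P_0\mH})^{-1}k$, so $SR_t = I + \text{(error)}$ on $\mK$ with error norm at most $\|S\|\,\eta\,\|(S|_{P_0\mH})^{-1}\|$, which is $<1$ once $\eta$ is chosen small enough; hence $SR_t$ is invertible on $\mK$, so $S:P_t\mH\to\mK$ is onto. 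For injectivity on $P_t\mH$, take $f\in P_t\mH$ with $Sf=0$; write $f = P_t g$ with $g=(P_t|_{P_0\mH})^{-1}\cdot$something, more cleanly set $f_0:=P_0|_{P_t\mH}$ applied to $f$, so $f_0\in P_0\mH$ and $f = P_t f_0$ (using that $P_t|_{P_0\mH}$ and $P_0|_{P_t\mH}$ are mutually inverse), then $0 = Sf = S f_0 + S(P_t-P_0)f_0$, giving $\|Sf_0\|\le \|S\|\eta\|f_0\|$ while $\|Sf_0\|\ge \|(S|_{P_0\mH})^{-1}\|^{-1}\|f_0\|$; for $\eta$ small this forces $f_0=0$, hence $f=0$. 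Bounded-below plus surjective gives the isomorphism, and an application of the open mapping theorem (or the explicit two-sided inverse assembled from $R_t$ and the near-identity isomorphisms) finishes it.

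The only quantitative bookkeeping is the choice of $\eta$: one picks $\eta < \min\{1/2,\ (2\|S\|\,\|(S|_{P_0\mH})^{-1}\|)^{-1}\}$ and then $\epsilon\in(0,\delta)$ so that $\|P_t-P_0\|<\eta$ for $|t|<\epsilon$, which is possible by continuity. No spectral theory or functional calculus is needed here — Lemma~\ref{lem:vardomains} is purely about bounded projections on an abstract Hilbert space, and its proof is a Neumann-series argument.

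\textbf{Main obstacle.} There is no serious obstacle; the one point requiring a little care is verifying that $P_t|_{P_0\mH}$ is genuinely a bijection onto $P_t\mH$ (not merely injective with closed range) and that its inverse is bounded uniformly in $t$ — this is where one must use both $P_0 P_t$ invertible on $P_0\mH$ \emph{and} $P_t P_0$ invertible on $P_t\mH$, rather than just one of them. Once that symmetric statement is in hand, everything else is the routine "invertible plus small perturbation" estimate above, applied in the Banach algebra $\mL(\mK)$ to the operator $SR_t$.
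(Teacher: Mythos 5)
Your proof is correct and follows essentially the same route as the paper's: both hinge on showing $P_t\colon P_0\mH\to P_t\mH$ is an isomorphism for small $|t|$ via a Neumann series (the paper writes down explicit left and right inverses $(I-P_0(P_0-P_t))^{-1}P_0$ and $P_0(I-P_t(P_t-P_0))^{-1}$; your $P_0P_t$/$P_tP_0$ observation is the same calculation), and both then treat $S$ on $P_t\mH$ as a small perturbation of the fixed isomorphism $S|_{P_0\mH}$. The paper is marginally slicker in that it simply factors $S|_{P_t\mH}=(SP_t|_{P_0\mH})\circ(P_t|_{P_0\mH})^{-1}$ as a composite of two isomorphisms rather than arguing surjectivity and injectivity separately as you do, but the mathematical content is identical.
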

\begin{proof}
Consider the family of operators $P_0\mH\ni f \mapsto S P_t f\in \mK$ between fixed spaces.
By assumption and continuous dependence, they are invertible when $|t|$ is small. 
Thus it suffices to prove that $P_t: P_0 \mH\rightarrow P_t\mH$ is invertible when $|t|$ is small.
This holds since $(I-P_0(P_0-P_t))^{-1} P_0$, $P_0(I-P_t(P_t-P_0))^{-1}:  P_t \mH\rightarrow P_0\mH$ 
are seen to be left and right inverses respectively.
\end{proof}
What remains to be proved is that the three maps are isomorphisms when $A$ is either
Hermitean, block or constant.
In fact, it suffices to prove this for (Reg-$A$) and (Neu-$A$), due to the following result proved in
\cite[Proposition~2.52]{AAH}.
\begin{prop}   \label{prop:regneuduality}
  The boundary value problem (Neu$^\perp$-$A$) is well posed if and only if (Reg-$A^*$) is well posed.
\end{prop}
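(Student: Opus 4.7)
The plan is to derive a Green-type identity for solutions of the $A$ and $A^*$ systems in the upper and lower half-spaces, and use it to convert the (Neu$^\perp$-$A$) isomorphism into the (Reg-$A^*$) isomorphism.

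First, I would set up the Green's identity. Taking $F_t=e^{-t|T_A|}f$ for $f\in E_A^+\mH$ and $G_t=e^{-t|T_{A^*}|}g$ for $g\in E_{A^*}^+\mH$, the candidate conserved flux is
\[
Q(t):=\int_{\R^n}\bigl[(AF_t)_0\cdot\overline{G_{t,0}} - F_{t,\ta}\cdot\overline{(A^*G_t)_\ta}\bigr]\,dx.
\]
A direct differentiation in $t$, using $\pd_t(AF)_0=-\divv_x(AF)_\ta$ and $\pd_t F_\ta=\nabla_x F_0$ together with the corresponding equations for $G$, integration by parts in $x$, and the identity $\langle A\cdot,\cdot\rangle=\langle\cdot,A^*\cdot\rangle$, should show $\pd_t Q\equiv 0$. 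Since $Q(t)\to 0$ as $t\to\infty$, one concludes $Q(0)=0$. Expanding with the block decomposition of $A$ and $A^*$, the terms containing the off-diagonal blocks $A_{0\ta}$ and $A_{\ta 0}$ will cancel, leaving the cleaner identity
\[
\langle A_{00}f_0, g_0\rangle = \langle A_{\ta\ta}f_\ta, g_\ta\rangle \qquad (f\in E_A^+\mH,\ g\in E_{A^*}^+\mH).
\]
I will denote this by $(\star^+)$; the same computation in $\R^{1+n}_-$ yields an analogous $(\star^-)$ on $E_A^-\mH\times E_{A^*}^-\mH$.

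Next, for the forward implication, I would assume $f\mapsto f_0$ is an isomorphism $E_A^+\mH\to L_2(\R^n;\C^m)$ with bounded ``Dirichlet-to-Regularity'' operator $R$ sending $\varphi$ to $f_\ta$ for the unique $f\in E_A^+\mH$ with $f_0=\varphi$. Substituting $f_\ta=Rf_0$ into $(\star^+)$ gives $A_{00}^*g_0=R^*A_{\ta\ta}^*g_\ta$ on $E_{A^*}^+\mH$, which proves $g\mapsto g_\ta$ is injective with a bounded left inverse $\psi\mapsto(A_{00}^{-*}R^*A_{\ta\ta}^*\psi,\psi)$. For surjectivity, given $\psi\in L_2(\R^n;\C^{nm})\cap\nul(\curl_x)$, I would define $g:=(A_{00}^{-*}R^*A_{\ta\ta}^*\psi,\psi)\in\mH$, decompose $g=g^++g^-$ with $g^\pm\in E_{A^*}^\pm\mH$, and aim to show $g^-=0$. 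A short calculation using the defining formula of $g$ shows $\langle A_{00}f_0, g_0\rangle=\langle A_{\ta\ta}f_\ta, g_\ta\rangle$ for all $f\in E_A^+\mH$; subtracting $(\star^+)$ applied to $(f,g^+)$ leaves $\langle A_{00}f_0, g^-_0\rangle=\langle A_{\ta\ta}f_\ta, g^-_\ta\rangle$, and $(\star^-)$ extends this to $f\in E_A^-\mH$, hence to all of $\mH$. Specialising first to $f=(f_0,0)$ forces $g^-_0=0$, and then specialising to $f=(0,f_\ta)$ with $\curl_x f_\ta=0$ forces $g^-_\ta=0$, after invoking the invertibility of $PA_{\ta\ta}$ on $\nul(\curl_x)\cap L_2$ (with $P$ the Helmholtz projection), which follows from the G\aa rding inequality~(\ref{eq:blockaccr}).

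The converse direction (Reg-$A^*$ iso $\Rightarrow$ Neu$^\perp$-$A$ iso) should follow by running the same argument with normal and tangential components exchanged and $A$ replaced by $A^*$, using $(\star^+)$ with the roles of $f$ and $g$ reversed. The main obstacle will be the surjectivity step in the forward direction: verifying that the constructed candidate $g$ lies in the upper Hardy subspace $E_{A^*}^+\mH$ requires \emph{both} Green identities $(\star^+)$ and $(\star^-)$ together, plus the non-degeneracy of $PA_{\ta\ta}$ on the curl-free subspace. Without the lower half-space identity, the argument would yield only injectivity of the (Reg-$A^*$) map.
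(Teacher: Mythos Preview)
The paper does not prove this proposition; it simply cites \cite[Proposition~2.52]{AAH}. So there is no in-paper argument to compare against directly. That said, your approach is correct and is essentially a two-function (i.e.\ $A$/$A^*$) version of the Rellich computation the paper carries out for Hermitean coefficients in Section~\ref{sec:hermitean}. A few remarks on the details and on how it relates to the operator-theoretic picture underlying \cite{AAH}.

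\textbf{The Green identity.} Your quantity $Q(t)$ does satisfy $\pd_t Q\equiv 0$: after using $\pd_t(AF)_0=-\divv_x(AF)_\ta$, $\pd_t F_\ta=\nabla_x F_0$ and integrating by parts in~$x$, all terms collapse to $\langle (AF)_\ta,\nabla_x G_0-\pd_t G_\ta\rangle=0$. The raw identity $Q(0)=0$ reads $\langle (Af)_0,g_0\rangle=\langle f_\ta,(A^*g)_\ta\rangle$; the off-diagonal cancellation you anticipate is exactly $\langle A_{0\ta}f_\ta,g_0\rangle=\langle f_\ta,(A^*)_{\ta 0}g_0\rangle$, leaving your $(\star^+)$.

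\textbf{Surjectivity.} Your argument is complete. The only point worth making explicit is the last step: from $\langle A_{\ta\ta}f_\ta,g^-_\ta\rangle=0$ for all curl-free $f_\ta$, take $f_\ta=g^-_\ta$ and use the G\aa rding inequality \eqref{eq:blockaccr} directly to get $\kappa\|g^-_\ta\|^2\le\re\langle A_{\ta\ta}g^-_\ta,g^-_\ta\rangle=0$; no Helmholtz projection is really needed. The converse direction is genuinely symmetric as you say, since $(\star^\pm)$ is symmetric in the roles of $(f,A)$ and $(g,A^*)$.

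\textbf{Comparison with the functional-calculus route.} The argument one expects in \cite{AAH} is more operator-theoretic: from $(\hat A)^*=N\widehat{A^*}N$ (the general form of the identity used for Hermitean $A$ in Section~\ref{sec:hermitean}) one gets $(D\hat A)^*=-N(\widehat{A^*}D)N$, hence $\big(\oA\,E_A^+\mH\big)^\perp\cap\mH=N\,\underline{A^*}\,E_{A^*}^+\mH$ in $\mH$. Unpacking this orthogonality gives precisely your identity $\langle(Af)_0,g_0\rangle=\langle f_\ta,(A^*g)_\ta\rangle$, after which the equivalence of the two BVPs is an abstract fact about complementary subspaces. Your PDE computation and this spectral-projection argument are two presentations of the same duality; yours has the advantage of being self-contained and not invoking the $b(T)^*=\bar b(T^*)$ machinery, while the functional-calculus version makes the symmetry $E_A^+\leftrightarrow E_{A^*}^+$ manifest in one line.
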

That (Reg-$A$) and (Neu-$A$) are well posed for Hermitean, block and constant coefficents, follows
from \cite[Section 3]{AAH} when $m=1$. 
For Hermitean and block form coefficients, these proofs are readily adapted to systems, but to
be self contained, we give simplified proofs below.

Define the operator $N:= \begin{bmatrix} -1 & 0 \\ 0 & I\end{bmatrix}$, which reflects
a vector in $\R^n$. 
Write $N^+=\tfrac 12(1+N)=\begin{bmatrix} 0 & 0 \\ 0 & I\end{bmatrix}$ for the tangential projection and 
$N^-=\tfrac 12(1-N)=\begin{bmatrix} 1 & 0 \\ 0 & 0\end{bmatrix}$ for the normal projection,
so that $N=N^+-N^-$.

\subsection{Hermitean matrices}   \label{sec:hermitean}

Let $f\in E^+_A\mH$.
This means that there is a vector field $F_t$ in $\R^{1+n}_+$ such that $\pd_t F_t=-T_A F_t$,
$\lim_{t\rightarrow\infty}F_t=0$ and $\lim_{t\rightarrow 0}F_t=f$.
Recall that
$$
   T_A= \oA^{-1}  D \uA
$$
and note that $D N +N D=0$.
Furthermore, assuming that $A^*=A$, it is seen from the definition of $\hat A$ that the Hermitean
condition translates to $(\hat A)^*=N\hat A N$.
The Rellich type identity which is useful here is the following.
\begin{multline*}
   (N\uA f, \oA f)=-\int_0^\infty \pd_t (N\uA F_t, \oA F_t) dt
  = \int_0^\infty (N\uA T_A F_t, \oA F_t)+ (N\uA F_t, \oA T_A F_t) dt \\
  = \int_0^\infty (N\hat A D \uA F_t, \oA F_t)+ (N\uA F_t, D \uA F_t) dt
  = \int_0^\infty ((N D+ D N)\uA F_t, \uA F_t) dt=0
\end{multline*}
Thus $((\uA f)_0, (\oA f)_0)= ((\uA f)_\ta,(\oA f)_\ta)$, or in view of (\ref{eq:uoa}),
\begin{equation}   \label{eq:rellich}
  ((f)_0, (A f)_0)= ((A f)_\ta,(f)_\ta).
\end{equation}
Consider first the Neumann problem. From (\ref{eq:rellich}) it follows that
$$
  \|f\|^2\approx \re(Af,f)= \re\big((Af)_0, f_0)+ ((Af)_\ta, f_\ta)\big)
  = 2\re((Af)_0, f_0)\lesssim \|(Af)_0\| \|f\|.
$$
This shows that $\|f\|\lesssim \|(Af)_0\|$ holds for the Neumann map
$E_A^+\mH\ni f\mapsto (Af)_0$, which implies that this map is injective
with closed range. 

It remains for us to prove surjectivity of this map. Note that the above estimates also show that
$E_{A_t}^+\mH\ni f\mapsto (A_tf)_0$  is injective with closed range when 
$A_t:= (1-t)I+ tA$, $0\le t\le 1$.
It follows from the proof of Lemma~\ref{lem:vardomains} and the method of 
continuity that all these maps have the same index. 
Since $I=A_0\in WP$, it follows that $A\in WP$.

Well posedness for (Reg-$A$) and (Neu$^\perp$-$A$) is proved in a similar way,
by keeping the factor $f_\ta$ and $f_0$ respectively from (\ref{eq:rellich}).

\subsection{Block matrices}   \label{sec:block}

Note that in this case the Neumann problems (Neu-$A$) and (Neu$^\perp$-$A$)
coincide, and that the operator $T_A$ has the form 
$$
  T_A= \begin{bmatrix}
    0 & A_{00}^{-1} \divv_x A_{\ta\ta} \\
    -\nabla_x & 0
  \end{bmatrix}.
$$
Note that in this case, the accretivity condition (\ref{eq:accrassumption}) splits
into the two independent assumptions $\re(A_{00} u,u)\gtrsim\|u\|_2^2$
and $\re(A_{\ta\ta}\nabla_x v,\nabla_x v)\gtrsim\|\nabla_x v\|_2^2$ 
for all $u\in L_2(\R^n;\C^m)$ and $\nabla_x v\in L_2(\R^n; \C^{nm})$.
Since the diagonal elements in $T_A$ are zero, so are the diagonal elements of 
$E_A$ since
\begin{equation}        \label{blockmatrix}
  E_A= T_A (T_A^2)^{-1/2} = (T_A^2)^{-1/2}T_A = 
  \begin{bmatrix}
    0 & L^{-1/2} A_{00}\divv_x A_{\ta\ta} \\
    -\nabla_x L^{-1/2} & 0
  \end{bmatrix},
\end{equation}
where $L:= -A_{00}\divv_x A_{\ta\ta}\nabla_x$.
Another way to see this is from the calculation
$$
  N E_A= N\sgn(T_A)N^{-1}N= \sgn(NT_AN^{-1})N= \sgn(-T_A)N= -E_AN.
$$
To prove well posedness, we need to prove that 
$N^+: E^+_A \mH \rightarrow N^+\mH$ and
$N^-: E^+_A \mH \rightarrow N^-\mH$
are isomorphisms. 
From $E_AN+NE_A=0$, we obtain explicit inverses as
$$
  2E_A^+: N^+\mH \longrightarrow E_A^+\mH \quad \text{and} \quad
  2E_A^+: N^- \mH \longrightarrow E_A^+\mH.
$$
For example, to see that $N^+(2E_A^+g)=g$ when $g\in N^+\mH$, we calculate
\begin{multline*}
  2N^+E_A^+g = N^+(I+E_A)g= g+ N^+E_Ag \\
  = g+ \tfrac 12 (1+N)E_Ag= 
  g+\tfrac 12(E_Ag- E_ANg)=g.
\end{multline*}

In fact well posedness of the Neumann and regularity problems for block coefficients
is equivalent to the Kato square root estimate
$$
  \|\sqrt{L} u\|_2 \approx \|\nabla_x u\|_2,
$$
as was first observed by Kenig~\cite[Remark 2.5.6]{kenig}.
To see this, we deduce from Equation (\ref{blockmatrix}) that $f\in E_A^+\mH$, i.e. $f=E_A f$, if and only if
$f_\ta=-\nabla_x L^{-1/2} f_0$, or inversely $f_0= L^{-1/2} A_{00}\divv_x A_{\ta\ta} f_\ta$ and
$\curl_x f_\ta=0$.
This can be used to construct $f= f_0+ E_A f_0= E_A f_\ta+ f_\ta$ from either $f_0$ or $f_\ta$.
Note that the Kato estimate translates to $\|f_0\|\approx \| f_\ta\|$.

\subsection{Constant matrices}

If $A$ is a constant matrix, we can make use of the Fourier transform.

(i)
First consider the simpler case when $m=1$.
In this case, the solutions to the BVPs can be explicitly computed on the Fourier
transform side, since the problem reduces to an eigenvector calculation.
At the frequency point $\xi\not = 0$, the space $\mH$ corresponds to the
two dimensional space $\mH_\xi:= \sett{ze_0+ w\xi}{z, w\in \C}$ of
vectors with tangential part parallel to $\xi$.
The operator $D$ corresponds to
$
  D_\xi:= 
  \begin{bmatrix}
    0 & i\xi^t \\
    -i\xi & 0
  \end{bmatrix}.
$
Compressing the constant matrix $\hat A$ to $\mH_\xi$, we define the 
$2\times 2$ strictly accretive matrix
$
  \begin{bmatrix}
    a & b \\
    c & d
  \end{bmatrix}
  :=
   \begin{bmatrix}
    \hat A_{00} & \hat A_{0\ta}\xi \\
    \xi^t \hat A_{\ta 0} & \xi^t \hat A_{\ta\ta}\xi
  \end{bmatrix}
$,
so that 
$$
  D_\xi \hat A
  \sim
    i\begin{bmatrix}
    c & d \\
    -a & -b
  \end{bmatrix}.
$$
Computing eigenvalues and vectors shows that $ze_0+ w\xi\in\chi_\pm(D_\xi \hat A)$
if and only if 
\begin{equation}   \label{eq:constrellich}
  (az+bw)= \Big( \tfrac 12(c-b)\pm i\sqrt{ad-\tfrac 14(b+c)^2}\Big)w.
\end{equation}
Applying the similarities in (\ref{eq:similarity}), we characterize well posedness
as follows.
That (Neu-$A$), (Reg-$A$) and (Neu$^\perp$-$A$) are well posed means that $ze_0+ w\xi\in\chi_\pm(D_\xi \hat A)$ is 
determined by $z$, $w$ and $az+bw$ respectively.
This is straightforward to verify using (\ref{eq:constrellich}).

(ii)
Next consider the case $m>1$.
In this case, we perform a Rellich type argument on the Fourier symbol, or
rather we make a ``reverse Rellich estimate''. 

The space $\mH_\xi$ is now isomorphic to $\C^{2m}$ since $z,w\in\C^m$.
In view of Proposition~\ref{prop:regneuduality}, it suffices
to prove a-priori estimates  $\|f\|\lesssim\|f_\ta\|$ and
$\|f\|\lesssim\|(Af)_0\|$ 
for $f\in\chi_+(T_{\xi})\mH^k_\xi$ uniformly for almost all $\xi\in\R^n$, 
where $T_\xi:= \oA^{-1}D_\xi\uA$.
However, since $\chi_+(tT_{\xi})= \chi_+(T_{\xi})$
for $t>0$, it suffices to consider the unit sphere $|\xi|=1$.
By continuity and compactness, we need only to verify that no non zero vector $f$
such that $f_\ta=0$ or $(Af)_0=0$ can be in the Hardy space, i.e.~be of the form $f=F(0)$,
where $F:\R_+\rightarrow\mH^k_\xi$ satisfies 
$\pd_t F= -T_{\xi}F$ and $\lim_{t\rightarrow\infty}F=0$.
To prove this, we use the fact that $D_\xi^2=I$ to obtain
\begin{multline*}
   ( D_\xi \oA f, \oA f )
   = -\int_0^\infty \pd_t  ( D_\xi \oA F(t), \oA F(t) )\,dt \\
   = 2\int_0^\infty \re(\uA F(t), \oA F(t)) \,dt= 2\int_0^\infty \re(A F(t), F(t)) \,dt.
\end{multline*}
We now observe that the left hand side vanishes if $f_\ta=0$ or $(Af)_0=0$,
and from the right hand side we then see that $F=0$ identically, and therefore $f=0$.
The method of continuity, perturbing $A$ to $I$ now shows that the maps 
$f\mapsto f_\ta$ and $f\mapsto (Af)_0$ are surjective, and thus isomorphisms.

We have now completed the
proof of Theorem~\ref{thm:bvpfordivform}.
\end{proof}

\begin{proof}[Proof of Corollary~\ref{cor:DJK}]
As in Lemma~\ref{lem:conjfcnreduction} a function $U$ solving (\ref{eq:divform}),
with properties as in Definition~\ref{defn:wellposed}, is the normal part of a vector field
$F= U+ F_\ta$ solving (\ref{eq:Laplacein1order}), with properties as in 
Definition~\ref{defn:wellposed}.
Theorem~\ref{thm:mainhardy} shows that $F= e^{-t|T_A|}f$, where $f= F|_{\R^n}\in E_A^+\mH$
and that estimates
$$
  \|f\|_2 \approx \sup_{t>0} \|F_t\|_2\approx \|\widetilde N_* (F)\|_2  \approx \tb{t\pd_t F_t} 
$$
hold.
If $A\in WP(Dir)= WP(Neu^\perp)$, then $\|f\|_2\approx \|u\|_2$ and
$\|F_t\|_2\approx \|U_t\|_2$ for all $t>0$, since $F_t\in E_A^+\mH$.
For the square function norm we observe that $\pd_t F= \nabla_{t,x}U$,
and for the non-tangential maximal function clearly 
$\|\widetilde N_* (F)\|_2\gtrsim \|\widetilde N_* (U)\|_2$ holds.
As $\|u\|_2\approx \sup_{t>0}\|U_t\|_2$ for solutions to (\ref{eq:divform}) has been shown, 
we have $\|U_t\|_2\lesssim \|U_s\|_2$
when $t>s$. The reverse estimate $\|\widetilde N_* (F)\|_2\lesssim \|\widetilde N_* (U)\|_2$ 
now follows from
\begin{multline*}
  \|\widetilde N_*(U)\|^2\gtrsim \sup_{t>0} \int_{\R^n}
\barint_{\hspace{-6pt} |y-x|<c_1t}\barint_{\hspace{-6pt} |s-t|<c_0t} |U(s,y)|^2 \, dsdydx \\
= \sup_{t>0} \barint_{\hspace{-6pt} |s-t|<c_0t} \|U_s\|^2 ds \gtrsim 
\sup_{t>0} \|U_{(1+c_0)t}\|^2\approx \|u\|^2.
\end{multline*}
This proves the corollary.
\end{proof}

\section{Uniqueness of solutions}   \label{sec:unique}

In this section we compare the solutions to the BVP's (Neu-$A$), (Dir-$A$) and (Reg-$A$)
in the sense of Definition~\ref{defn:wellposed}, with the standard solutions obtained 
from the Lax--Milgram Theorem.
This uses the homogeneous Sobolev space $\dot H^1(\R^{1+n}_+;\C^m)$, equipped with the norm
$\|U\|^2_{\dot H^1}:= \int_{\R^{1+n}_+}|\nabla_{t,x} U|^2$, and the subspace of functions with 
vanishing trace.
Continuing our first order approach to BVP's via (\ref{eq:Laplacein1order}), 
we make the following definition.
\begin{defn}
Introduce spaces of vector fields
\begin{align*}
  L_2^\nabla(\R^{1+n}_+;\C^{(1+n)m}) &:= \sett{F\in L_2(\R^{1+n}_+;\C^{(1+n)m})}{\curl_{\R^{1+n}_+}F=0} \quad \text{and} \\
  L_2^{\nabla_0}(\R^{1+n}_+;\C^{(1+n)m}) &:= \sett{F \in L_2(\R^{1+n}_+;\C^{(1+n)m})}{\curl_{\R^{1+n}} (F_z)=0}.
\end{align*} 
The condition $\curl_{\R^{1+n}} (F_z)=0$ here means that the extension by zero $F_z$, 
of $F$ to $\R^{1+n}$, is curl free, 
or formally: $\curl_{\R^{1+n}_+}F=0$ and the boundary trace of $F$ is normal to $\R^n$.
If $F\in L_2^\nabla(\R^{1+n}_+;\C^{(1+n)m})$, it is seen that there there exists $U\in L_2^{loc}(\R^{1+n}_+;\C^m)$, unique 
up to constants among the distributions on $\R^{1+n}_+$, such that $\nabla_{t,x} U= F$.
Define Hilbert spaces
\begin{align*}
  \dot H^1(\R^{1+n}_+;\C^m) &:= \sett{U\in L_2^{loc}(\R^{1+n}_+;\C^m)} {\nabla_{t,x}U\in L_2^{\nabla}(\R^{1+n}_+;\C^{(1+n)m})}, \\
  \dot H^1_0(\R^{1+n}_+;\C^m) &:= \sett{U\in L_2^{loc}(\R^{1+n}_+;\C^m)} {\nabla_{t,x}U\in L_2^{\nabla_0}(\R^{1+n}_+;\C^{(1+n)m})},
\end{align*}
with norms so that the correpondence $U\leftrightarrow F=\nabla_{t,x}U$ is an isometry.
\end{defn}
It is straightforward to verify that a function $U\in \dot H^1(\R^{1+n}_+;\C^m)$ belongs to the subspace 
$\dot H^1_0(\R^{1+n}_+;\C^m)$ if and only if there exists a constant $C$ such that $U$ extended by
$C$ to $\R^{1+n}$ belongs to $\dot H^1(\R^{1+n};\C^m)$.

Functions $U\in \dot H^1(\R^{1+n}_+;\C^m)$ are well defined only up to constants, 
whereas for $U\in \dot H^1_0(\R^{1+n}_+;\C^m)$, we 
will choose the constant so that $U|_{\R^n}=0$.
It is not true that $\dot H^1_0(\R^{1+n}_+;\C^m)\subset L_2(\R^{1+n}_+;\C^m)$, as a scaling argument 
readily shows. However, Poincar\'e's inequality shows that
$$
  \iint_{\R^{1+n}_+} |U(t,x)|^2\frac{dtdx}{1+t^2+|x|^2} \lesssim \iint_{\R^{1+n}_+} |\nabla_{t,x}U(t,x)|^2
  \,dtdx,\qquad U\in \dot H^1_0(\R^{1+n}_+;\C^m).
$$

If $F=\nabla_{t,x} U$ solves (\ref{eq:Laplacein1order}), then we formally have
$$
   J_A(U, V) := \iint_{\R^{1+n}_+} (A \nabla_{t,x}U, \nabla_{t,x} V) \, dtdx = -\int_{\R^n} ((Af)_0, v) \, dx,
$$
where $f= F|_{\R^n}$ and $v=V|_{\R^n}$.
As pointed out in Section~\ref{sec:notres}, the standing assumption that $A$ is strictly accretive
on $\nul(\curl_\ta)$, i.e. (\ref{eq:accrassumption}), implies that the G\aa rding inequality
(\ref {eq:uppergarding}) in $\R^{n+1}_+$ holds, i.e.  $|J_A(U,U)|\gtrsim \|U\|_{\dot H^1}^2$.

Note that $V\mapsto  \int_{\R^n} (\phi(x), v(x))\, dx$ in Equation (\ref{eq:lmneu}) below 
defines a bounded functional on $\dot H^1(\R^{1+n}_+;\C^m)$ if 
$\phi=\divv_x w$, where $\phi\in L_2(\R^n;\C^m)$, $w\in L_2(\R^n;\C^{nm})$,
since $\int |\hat\phi(\xi)|^2 \max(|\xi|^{-1} ,1) d\xi<\infty$ and the trace map $V\mapsto v$
maps $\dot H^1(\R^{1+n}_+;\C^m)\rightarrow \dot H^{1/2}(\R^n;\C^m)$.
Furthermore 
$V\mapsto J_A(P_t u, V)$ defines a bounded functional on $\dot H^1_0(\R^{1+n}_+;\C^m)$ 
if $u\in H^1(\R^n;\C^m)$ since
$\iint  |\xi e^{-t|\xi|} \hat u(\xi)|^2 dtd\xi\approx \|u\|^2_{H^{1}(\R^n)}<\infty$, 
where $P_t$ denotes the Poisson extension
$$
  P_t u(x) := \frac{\Gamma((1+n)/2)}{\pi^{(1+n)/2}}
  \int_{\R^n} \frac{t \, u(y)\, dy}{(t^2+|x-y|^2)^{(1+n)/2}}.
$$
The Lax--Milgram Theorem proves the existence and uniqueness of the following
$\dot H^1$ solutions $U$.
\begin{defn}
  We say that $\phi$ is {\em good boundary data} for (Neu-$A$) if 
  $L_2(\R^n;\C^m)\ni\phi=\divv_x w$, where $w\in L_2(\R^n;\C^{nm})$.
  If $\phi$ is good, we define {\em the $\dot H^1$ solution} to the Neumann problem
  to be the unique function $U\in \dot H^1(\R^{1+n}_+;\C^m)$ such that
\begin{equation}   \label{eq:lmneu}
  J_A(U, V) = 
  \int_{\R^n} (\phi(x), v(x))\, dx, \qquad\text{for all } V\in \dot H^1(\R^{1+n}_+;\C^m).
\end{equation}

We say that $u$ is {\em good boundary data} for (Dir-$A$), or equivalently that
$\nabla_x u$ is {\em good boundary data} for (Reg-$A$),  if 
$u\in H^1(\R^n;\C^m)$.
  If $u$ is good, we define {\em the $\dot H^1$ solution} to the Dirichlet (regularity) problem
  to be the unique function $U\in \dot H^1(\R^{1+n}_+;\C^m)$ such that
\begin{equation}   \label{eq:lmdir}
  J_A(U,  V) = 0, 
  \qquad\text{for all } V\in\dot H^1_0(\R^{1+n}_+;\C^m),
\end{equation}
and $U(t,x)- P_t u(x)\in \dot H^1_0(\R^{1+n}_+;\C^m)$.
\end{defn}
The goal in this section is to prove the following uniqueness result.
\begin{thm}    \label{thm:uniqueness}
  Assume that $A$ belongs to the connected component of WP(Neu) / WP(Reg) / WP(Dir) 
  that contains $I$. 
  If the boundary data is good, then
  the solutions to (Neu-$A$) / (Reg-$A$) / (Dir-$A$) in the sense of 
  Definition~\ref{defn:wellposed}, coincide with the $\dot H^1$ solutions.
\end{thm}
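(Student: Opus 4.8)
The plan is to compare the two notions of solution via the first-order machinery already in place, exploiting that both solution concepts are parametrized by boundary data through the Hardy projection $E_A^+$. First I would reduce everything to the Neumann problem: by Lemma~\ref{lem:conjfcnreduction} the Dirichlet problem is equivalent to (Neu$^\perp$-$A$), and by Proposition~\ref{prop:regneuduality} the regularity problem is the adjoint of (Neu$^\perp$-$A^*$); since the $\dot H^1$ solutions are defined by a sesquilinear form that behaves well under these identifications, it suffices to treat (Neu-$A$) and (Neu$^\perp$-$A$) directly. So fix good boundary data, say $\phi = \divv_x w$ for the Neumann problem.

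The core step is to show that the $\dot H^1$ solution $U$, whose existence and uniqueness come from Lax--Milgram applied with the G\aa rding inequality (\ref{eq:uppergarding}), gives rise to a gradient vector field $F := \nabla_{t,x} U$ which, \emph{when $A$ lies in the connected component of $WP$ containing $I$}, actually satisfies all the conditions in Definition~\ref{defn:wellposed}(i) --- in particular $F \in C^1(\R_+; L_2)$, the decay $\lim_{t\to\infty} F_t = 0$ in $L_2$, and the existence of an $L_2$ trace $f = \lim_{t\to 0} F_t$ with $-(Af)_0 = \phi$. Granting this, uniqueness of the Definition~\ref{defn:wellposed} solution then forces the two solutions to agree. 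To produce these $L_2$ properties one solves the first-order equation $\pd_t F + T_A F = 0$ abstractly: by Corollary~\ref{cor:fcalcta} the operator $T_A$ has a bounded holomorphic functional calculus on $\mH$, so for any $g \in E_A^+\mH$ the function $e^{-t|T_A|}g$ is a genuine $C^1(\R_+;L_2)$ solution with the right limits and with $\tb{t\pd_t F_t} \approx \|g\|_2$ (this is exactly Theorem~\ref{thm:mainhardy}). Thus I would argue that $F = \nabla_{t,x}U$, being curl-free and weakly solving the divergence equation, must coincide with such a semigroup solution on every strip $\R^n \times (\epsilon, \infty)$; the point where well posedness enters is in passing to the limit $\epsilon \to 0$ and identifying the boundary trace, i.e.\ in knowing that the Neumann datum $(Af)_0$ is in $L_2$ and recovers $\phi$. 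Here the connectedness hypothesis is used via the method of continuity: at $A = I$ the two solution concepts manifestly agree (explicit Poisson/Riesz kernels), the $\dot H^1$ solution depends analytically on $A$, and the Definition~\ref{defn:wellposed} solution depends analytically on $A$ throughout $WP$ by Theorem~\ref{thm:mainhardy}; since both depend continuously and the trace maps are isomorphisms onto the same spaces along the component of $I$, the identification propagates.

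The main obstacle I anticipate is the passage between the variational (integrated, $\dot H^1$) formulation and the pointwise-in-$t$, $C^1(\R_+;L_2)$ formulation --- specifically, upgrading the $L_2(\R^{1+n}_+)$ bound on $\nabla_{t,x}U$ to the pointwise bound $\sup_{t>0}\|\nabla_{t,x}U(t,\cdot)\|_2 < \infty$ and the existence of an $L_2$ boundary trace. A priori an $\dot H^1$ solution need not have $L_2$ slices at all; what rescues us is that $U$ is a solution of the \emph{equation}, so $F_t = \nabla_{t,x}U$ satisfies the first-order ODE $\pd_t F_t = -T_A F_t$ in the sense of distributions on $(0,\infty)$ with values in a negative Sobolev space, and caloric-type regularity for the analytic semigroup generated by the restriction of $T_A$ to $\mH$ (available since $T_A$ has bounded $H_\infty$ calculus, hence generates an analytic semigroup on $\clos{\ran(D)}$) promotes this to $F_t \in C^1(\R_+;L_2)$ for $t>0$ with the square-function bound $\int_0^\infty \|t\pd_t F_t\|_2^2\, dt/t < \infty$. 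From the square-function bound together with the ODE one extracts the $L_2$ limit as $t\to 0$; that this limit $f$ has normal part satisfying $-(Af)_0 = \phi$ follows by testing the weak formulation (\ref{eq:lmneu}) against $V = \psi(\epsilon t) v(x)$ and letting $\epsilon \to 0$, exactly as in the proof of Lemma~\ref{lem:conjfcnreduction}. Once $F$ is a bona fide Definition~\ref{defn:wellposed} solution with datum $\phi$, well posedness of (Neu-$A$) --- valid since $A \in WP(\text{Neu})$ --- gives uniqueness, and we are done; the good-data hypothesis is what guarantees the $\dot H^1$ solution exists in the first place and that its trace lies in $L_2$ rather than merely $\dot H^{-1/2}$.
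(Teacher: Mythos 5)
Your proposal identifies the right landscape---reduction to the Neumann problem, analytic dependence on $A$, propagation from $A=I$ via the connectedness hypothesis---and it correctly flags the crucial difficulty, namely bridging the variational ($\dot H^1$) formulation and the pointwise-in-$t$, $C^1(\R_+;L_2)$ formulation. However, the resolution you sketch for that difficulty does not work, and the orientation of your argument is the wrong one.

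You propose to show that the $\dot H^1$ solution $U$ yields a vector field $F=\nabla_{t,x}U$ satisfying Definition~\ref{defn:wellposed}, with the $L_2$ boundary trace recovered from ``caloric-type regularity'' and a square function bound $\int_0^\infty\|t\pd_t F_t\|_2^2\,dt/t<\infty$. This step fails: the $\dot H^1$ information is only $\int_0^\infty\|F_t\|_2^2\,dt<\infty$, which on the Fourier side is $f\in\dot H^{-1/2}$, whereas the square function bound you claim is equivalent to $f\in L_2$. Semigroup regularity of $F_t=e^{-(t-s)|T_A|}F_s$ gives smoothness for $t>0$ and decay at $\infty$, but it does not upgrade $\dot H^{-1/2}$ data to $L_2$ data as $t\to0$: for example with $A=I$ any $g\in\dot H^{-1/2}\setminus L_2$ produces a solution $F_t=e^{-t\sqrt{-\Delta}}g$ which is $\dot H^1$ and $C^\infty$ on every strip $t>\epsilon$ yet has no $L_2$ trace. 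So the passage $\dot H^1\Rightarrow$ Definition~\ref{defn:wellposed} is exactly where the argument must do real work, and it is \emph{not} automatic.

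The paper's proof goes in the opposite direction and is essentially forced to do so. One starts from the boundary equation solution $F=e^{-t|T_A|}f$, which by construction already satisfies Definition~\ref{defn:wellposed} with $f\in L_2$, and one proves that this $F$ lies in $\dot H^1$; since Lax--Milgram gives uniqueness in $\dot H^1$, the two solutions must then agree. The $\dot H^1$ bound is equivalent to $f\in\ran(T_A)$ (write $f=|T_A|^{1/2}f_0$ and use the quadratic estimates), and establishing this is where the ``good data'' hypothesis $\phi=\divv_x w\in\ran(D)$ enters, via a Neumann series for the double layer type operator $K_A=N^-\sgn(D\hat A)N^-$, which vanishes for block $A$ and maps $\ran(D)$ into $\ran(D)$. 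This regularity argument (Lemma~\ref{lem:unique}) only works for $A$ close to a block matrix; the extension to the whole connected component of $I$ is then by an explicit analytic continuation argument for the scalar functional $h(A)=\iint (F_A-F_A^0,H)$, using analyticity of $A\mapsto F_A$ (from the holomorphic functional calculus) and of $A\mapsto F_A^0$, and the identity theorem along a chain of balls. Your ``method of continuity'' paragraph gestures at this but does not give the mechanism: continuity alone does not propagate an equality of two functions of $A$; one needs analyticity plus the identity theorem (or a carefully run open--closed argument which you do not supply). To repair your proposal you would need to reverse the direction of the core implication and supply the Neumann-series regularity argument for the base case; the analytic continuation step would then also need to be made precise rather than asserted.
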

\begin{rem}
  For general $A$ in the set of well posedness, the solutions constructed in this paper using
  the boundary equation method do not necessarily coincide with the $\dot H^1$
  solutions. Examples of this were shown in \cite{AxNon}.
  Note that these examples combined with Theorem~\ref{thm:uniqueness} proves the existence of many 
  coefficients that do not have well posed BVP's (even in when $n=m=1$ with real $A$), 
  sufficiently many to disconnect these $A$ with non-$\dot H^1$ solutions from the identity.
\end{rem}
The proof of Theorem~\ref{thm:uniqueness} uses the following lemma with $A_0=I$.
\begin{lem}   \label{lem:unique}
  Let $A_0$ be a block matrix. 
  Then there exists $\epsilon>0$, such that if $\| A-A_0 \|_\infty <\epsilon$
  and the boundary data is good, then
  the solutions to the BVP's in the sense of 
  Definition~\ref{defn:wellposed}, coincide with the $\dot H^1$ solutions.
\end{lem}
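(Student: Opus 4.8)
The plan is to show that, for $A$ in a sufficiently small $L_\infty$-ball about the block matrix $A_0$, the solution produced in the sense of Definition~\ref{defn:wellposed} lies in $\dot H^1$ and satisfies the weak formulation defining the $\dot H^1$ solution; the uniqueness part of the Lax--Milgram theorem then identifies the two. First I would record that both objects exist and are unique: since $A_0$ is a block matrix, Theorem~\ref{thm:bvpfordivform} gives $A_0\in WP(\mathrm{Neu})\cap WP(\mathrm{Reg})\cap WP(\mathrm{Dir})$, and openness of these sets provides a unique Definition~\ref{defn:wellposed} solution $U=U_A$ once $\|A-A_0\|_\infty$ is small; and strict accretivity of $A$ on $\nul(\curl_\ta)$ yields the $\R^{1+n}_+$ G\aa rding inequality (\ref{eq:uppergarding}), so Lax--Milgram produces a unique $\dot H^1$ solution for good data. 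It therefore suffices to verify that, for good data, $U$ satisfies $\nabla_{t,x}U\in L_2(\R^{1+n}_+)$ (together with $U(t,x)-P_tu(x)\in\dot H^1_0$ in the Dirichlet case) as well as the identity (\ref{eq:lmneu}) or (\ref{eq:lmdir}).

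For the energy bound I would use the representation from the proof of Theorem~\ref{thm:mainhardy}: $U$ is the potential of $F_t=e^{-t|T_A|}f$, where $f\in E_A^+\mH$ is the boundary trace. For (Neu-$A$) and (Reg-$A$) one has $F=\nabla_{t,x}U$, while for (Dir-$A$), via Lemma~\ref{lem:conjfcnreduction}, $U=F_0$ and, from $\partial_t F_0=-(T_AF)_0$ and $\nabla_x F_0=-(T_AF)_\ta$, one has $|\nabla_{t,x}F_0|=|T_AF_t|$ pointwise. Hence $\iint_{\R^{1+n}_+}|\nabla_{t,x}U|^2$ equals $\int_0^\infty\|e^{-t|T_A|}f\|_2^2\,dt$ in the first two cases and, using that $T_A=\sgn(T_A)|T_A|$ with $\sgn(T_A)$ bounded and invertible on $\mH$, is comparable to $\int_0^\infty\||T_A|e^{-t|T_A|}f\|_2^2\,dt$ in the Dirichlet case. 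Writing $e^{-t|T_A|}f=t^{-1/2}\psi(tT_A)\,|T_A|^{-1/2}f$ and $|T_A|e^{-t|T_A|}f=t^{-1/2}\psi(tT_A)\,|T_A|^{1/2}f$ with $\psi(z)=(z\sgn z)^{1/2}e^{-z\sgn z}$, which is holomorphic on a double sector, decays at $0$ and at $\infty$, and is nonzero on each half, the quadratic estimate (\ref{eq:genqest}) for $T_A$ turns these integrals into $\||T_A|^{-1/2}f\|_2^2$, respectively $\||T_A|^{1/2}f\|_2^2$. So the whole matter reduces to proving that good boundary data puts $f$ into $\dom(|T_A|^{-1/2})$ for the two Neumann problems and into $\dom(|T_A|^{1/2})$ for (Dir-$A$), with norm controlled by that of the data.

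This domain membership I would establish first at $A=A_0$ and then transfer. For the block matrix $A_0$ the matrices $\oA$, $\uA$ are themselves block, $T_{A_0}$ is off-diagonal, and the explicit formula (\ref{blockmatrix}) for $E_{A_0}$ together with the Kato square root estimate $\|\sqrt{L}u\|_2\approx\|\nabla_x u\|_2$ let one compute $\||T_{A_0}|^{\mp1/2}f\|_2$ in terms of the data and see it is finite when the data is good --- for instance, for (Neu-$A_0$) one has $f_0=-A_{00}^{-1}\phi$ with $\phi=\divv_x w$, and the Kato estimate is exactly what controls the (low-frequency) behaviour of $f$ that this encodes, while for (Dir-$A_0$) the relevant data is $f_0=u\in H^1$. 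For $\|A-A_0\|_\infty$ small I would transfer this by the perturbation machinery underlying Theorem~\ref{thm:mainhardy}: $\oA$, $\uA$ and $\hat A$ depend analytically on $A$, the operators $T_A=\oA^{-1}(D\hat A)\oA$ have bounded holomorphic functional calculus with bounds locally uniform in $A$ (Corollary~\ref{cor:fcalcta}), the trace $f=f_A$ depends continuously on $A$, and the square-root domains $\dom(|T_A|^{\pm1/2})$ are stable under small $L_\infty$ perturbations (in the spirit of \cite[Theorem~6.4]{AKMc}); together these give $\||T_A|^{\mp1/2}f_A\|_2\lesssim\|\phi\|_2$, respectively $\lesssim\|u\|_{H^1}$, uniformly for $A$ near $A_0$.

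Finally I would pass from the distributional equation of Definition~\ref{defn:wellposed} to the weak formulation by integrating by parts in $x$ on each slice $\{t\}\times\R^n$ --- legitimate since $\curl_x F_\ta=0$ and $\divv_x(AF_t)_\ta\in L_2$ --- and then in $t$ over $(0,\infty)$, the contributions at $t=0$ and $t=\infty$ being absorbed using the $L_2$ limits $F_t\to f$, $F_t\to0$ and the decay of the (Poisson-normalized) test functions, after a routine density reduction of $V$ to $C_0^\infty(\overline{\R^{1+n}_+};\C^m)$. In the Dirichlet case $U-P_tu\in\dot H^1_0$ then follows from the energy bound and the standard estimate $\iint_{\R^{1+n}_+}|\nabla_{t,x}P_tu|^2\approx\|u\|_{\dot H^{1/2}}^2$. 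Equations (\ref{eq:lmneu})/(\ref{eq:lmdir}) then identify $U=U_A$ with the $\dot H^1$ solution by uniqueness, which would complete the argument. The main obstacle is the energy estimate, and inside it the regularity assertion that good data places $f$ in $\dom(|T_A|^{\mp1/2})$ uniformly for $A$ near the block matrix $A_0$ --- this is exactly where the solution of the Kato square root problem (at $A_0$) and the stability of the holomorphic functional calculus of $T_A$ (for nearby $A$) come in.
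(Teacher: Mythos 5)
Your high-level reduction is sound and matches the paper's: you correctly observe that the whole matter comes down to proving that good data places the boundary trace $f=f_A$ in $\ran(|T_A|^{1/2})$ (for Neumann/regularity) or in $\dom(|T_A|^{1/2})$ (for Dirichlet), and your use of the quadratic estimate (\ref{eq:genqest}) with $\psi(z)=|z|^{1/2}e^{-|z|}$ to convert $\iint|\nabla_{t,x}U|^2$ into $\||T_A|^{\mp 1/2}f\|_2^2$ is exactly right, as is the reduction of the weak formulations (\ref{eq:lmneu})/(\ref{eq:lmdir}) to slice-by-slice integration by parts.

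The genuine gap is in how you justify the square-root domain/range membership for $A$ near $A_0$. You propose to establish it at $A=A_0$ via (\ref{blockmatrix}) and the Kato estimate, and then ``transfer'' to nearby $A$ by appealing to stability of the spaces $\dom(|T_A|^{\pm 1/2})$ under $L_\infty$ perturbation, citing the spirit of \cite[Theorem 6.4]{AKMc}. But that theorem gives analyticity of $A\mapsto b(T_A)$ only for \emph{bounded} holomorphic $b$; it says nothing about stability of domains of \emph{unbounded} functions such as $|T_A|^{\pm 1/2}$. Such stability is not at all obvious here (it is essentially a Kato-type regularity assertion for the first-order operator $T_A$, and the trace $f_A$ is moving with $A$ as well, so $L_2$-continuity of $f_A$ does not help, since $\ran(|T_A|^{1/2})$ is not closed in $L_2$). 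The paper avoids this entirely by a more constructive argument: writing the boundary condition as $(I+K_A)h=\text{data}$ with the double layer type operator $K_A:=N^{\mp}\sgn(D\hat A)N^{\mp}$ (resp.\ $N^-\sgn(\hat AD)N^-$), it observes that $K_{A_0}=0$ for block $A_0$, that $K_A$ is small in norm for $A$ near $A_0$, and — crucially — that $K_A$ \emph{intertwines} with $D$, namely $K_ADg=D\bigl(N^+\sgn(\hat AD)N^+g\bigr)$ (resp.\ $DK_Ag=N^+\sgn(D\hat A)N^+Dg$). Because the good data is $Dw$ (resp.\ lies in $\dom(D)$), every partial sum of the Neumann series for $(I+K_A)^{-1}$ applied to the data stays in $\ran(D)$ (resp.\ $\dom(D)$), and closedness of $D$ then gives $h\in\ran(D)$ (resp.\ $\dom(D)$), hence $f\in\ran(T_A)$ (resp.\ $\dom(T_A)$) — which is in fact a stronger conclusion than membership in the square-root range/domain. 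This intertwining mechanism is precisely where the block hypothesis on $A_0$ is exploited beyond mere well-posedness, and it is the ingredient your sketch lacks; without it, or a proved perturbation theorem for the square-root domains of the bisectorial family $T_A$, the transfer step does not go through.
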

\begin{proof}[Proof for (Neu-$A$)]
   Let $F=\nabla_{t,x} U= e^{-t|T_A|}f$ be the boundary equation solution to (Neu-$A$) in 
   $\R^{1+n}_+$ with data $\phi=\divv_ x w= -(Af)_0$.
   Using the isomorphism $\oA$ from (\ref{eq:similarity}), we define the 
   similar Hardy function $\tilde f:= \oA f\in \chi_+(D\hat A)\mH$.
   
   Let $N^-$ and $N^+$ be the normal and tangential projections
   from Section~\ref{sec:divbvp}.
   The boundary condition on $\tilde f$ can be written
   $N^-\tilde f= -\phi$.
  We solve for $\tilde f$ by making the ansatz $\tilde f= 2\chi_+(D\hat A)h$, where
  $h\in N^-\mH$ (i.e. $h_\ta=0$). This yields the equation
$$
  -\phi= N^-\tilde f = 2N^-\chi_+(D\hat A)h = (I+ N^- \sgn(D\hat A))h
$$
for $h$, in the normal subspace $N^-\mH$.
We note the following properties of  ``the double layer type operator''
$K_A:= N^- \sgn(D\hat A)N^-$.  (See \cite{Axthesis} for explanations of this terminology.)
When $A=A_0$, then as in Section~\ref{sec:block} it follows that $K_{A_0}=0$,
since the diagonal entries of $\sgn(D\hat A)$ are zero in the normal/tangential splitting
of the space.
Theorem~\ref{thm:mainhardy} shows that $K_A$ depends continuously on $A$.
Moreover $K_A(\ran(D)) \subset \ran(D)$
since 
$$
  K_ADg= N^- \sgn(D\hat A)DN^+g=  N^- D\sgn(\hat AD)N^+g=   D\big(N^+\sgn(\hat A D)N^+g\big)
$$ 
when $g\in\dom(D)$.
Therefore, when $\|A - A_0\|_\infty$ is small, we can expand $(I+K_A)^{-1}$ in a Neumann
series and deduce that $h\in\ran(D)$ since $-\phi= -Dw\in\ran(D)$.
Indeed
$$
  \sum_{k=0}^N (-K_A)^k (-Dw)= D\Big(- \sum_{k=0}^N (-N^+ \sgn(\hat A D)N^+)^k w \Big)=: Dw_N,
$$
where $w_N$ and $Dw_N\rightarrow h$ converges in $L_2$. Since $D$ is closed, 
$h\in\ran(D)$.
This shows that $\tilde f= 2\chi_+(D\hat A)h\in \ran(D\hat A)$, and thus $f\in \ran(T_A)$.
In particular $f=|T_A|^{1/2}f_0$ for some $f_0\in\mH$.
Quadratic estimates for the operator $T_A$ now shows that
\begin{multline*}
  \iint_{\R^{1+n}_+} |\nabla_{t,x}U|^2\, dtdx =\int_0^\infty \|F_t\|^2 dt \\
  = \int_0^\infty \| (t|T_A|)^{1/2} e^{-t|T_A|} f_0 \|^2\frac {dt}t\approx \| f_0 \|^2
  <\infty.
\end{multline*}
Thus $U\in \dot H^1(\R^{1+n}_+;\C^m)$.
To verify (\ref{eq:lmneu}), let $V\in \dot H^1(\R^{1+n}_+;\C^m)$ and consider the function 
$$
  g(t) := \int_{\R^n} ((A F_t)_0, V_t)\, dx, \qquad t>0,
$$
where we view $t\mapsto (A F_t)_0$ as a $C^\infty$ curve in $\ran(\divv_x; L_2)$ and 
$t\mapsto V_t$ as a continuous curve in $\dot H^{1/2}(\R^n;\C^m)$.
If $V_t\in C^1(\R_+;\dot H^{1/2})$, then
$$
  g'(t)=  \int_{\R^n} \big( (-\divv_x AF_t, V_t)+ (AF_t, \pd_t V_t) \big)\, dx = \int_{\R^n} (AF_t, \nabla_{t,x} V)\, dx.
$$
Hence $g(T)-g(\epsilon)= \iint_{\epsilon <t<T} (AF_t, \nabla_{t,x} V)\, dtdx$.
This also holds for general $V\in \dot H^1(\R^{1+n}_+;\C^m)$, which can be shown by mollifying $t\mapsto V_t$.
Taking limits $(\epsilon, T)\rightarrow (0,\infty)$ proves (\ref{eq:lmneu}).
\end{proof}

\begin{proof}[Proof for (Reg-$A$)]
Similar to the proof for the Neumann problem, we consider the equation
$$
  \nabla_x u= N^+\tilde f = 2N^+\chi_+(D\hat A)h = (I+ N^+ \sgn(D\hat A))h
$$
for $h\in N^+\mH$, in the tangential subspace.
We deduce that the trace $f$ of the solution is in the range of $T_A$, and therefore
$U\in \dot H^1(\R^{1+n}_+;\C^m)$ and (\ref{eq:lmdir}) follows as in the proof for the Neumann problem.

To prove that $U- P_t u\in \dot H^1_0(\R^{1+n}_+;\C^m)$, it suffices to show that $\pd_0 H_j= \pd_j H_0$ 
on $\R^{1+n}$, for $j=1,\ldots, n$, when $H$ is $\nabla_{t,x}( U- P_t u )$ extended by zero.
To this end, let $\Phi\in C^\infty_0(\R^{1+n})$ and consider the function
$$
  g_j(t):= \int_{\R^n} H_j(t,x) \Phi(t,x)\, dx, \quad t>0.
$$
Since $H$ is curl-free on $\R^{1+n}_+$, we have 
$\pd_0g_j=\int_{\R^n} H_0(-\pd_j \Phi)+ H_j\pd_0 \Phi$.
As we have $L_2(\R^n)$ convergence
$$
  H_j= \pd_j U_t - \pd_j P_t u \longrightarrow \pd_j u - \pd_j u =0,\qquad
  t\longrightarrow 0,
$$
integration of $\pd_0 g_j$ over $t\in (0, \infty)$ shows that
$\iint H_j\pd_0 \Phi= \iint H_0 \pd_j\Phi$ for all $\Phi\in C^\infty_0(\R^{1+n})$,
i.e. $\pd_0 H_j= \pd_j H_0$.
\end{proof}

\begin{proof}[Proof for (Dir-$A$)]
   Let $F=\nabla_{t,x} U$ be the boundary equation solution to (Neu$^\perp$-$A$) with data
   $\varphi=-u \in H^1(\R^n;\C^m)$, and recall that $U= F_0$ is the solution to (Dir-$A$).
   Consider the boundary trace $f=F|_{\R^n}\in E^+_A\mH$.
   We now instead use the isomorphism $\uA$ from (\ref{eq:similarity}), and define the 
   similar Hardy function $\tilde f:= \uA f\in \ran(\chi_+(\hat A D))$.

   The boundary condition on $\tilde f$ can be written
   $N^-\tilde f= N^- f= u$.
  We solve for $\tilde f$ using the ansatz $\tilde f= 2\chi_+(\hat A D)h$, where
  $h\in N^-\mH$. This yields the equation
$$
  u= N^-\tilde f = 2N^-\chi_+(\hat A D)h = (I+ N^- \sgn(\hat A D))h
$$
for $h$.
We note that the double layer type operator 
$K_A:= N^- \sgn(\hat A D)N^-$ maps 
$K_A(\dom(D)) \subset \dom(D)$
since 
$$ 
  DK_Ag= N^+ D\sgn(\hat A D)N^-g= N^+\sgn(D\hat A) DN^-g=  N^+\sgn(D \hat A )N^+(Dg)
$$ 
when $g\in\dom(D)$.
As above, since $\|K_A\|$ is small when $\|A- A_0\|_\infty$ is small, we can expand $(I+K_A)^{-1}$ 
in Neumann
series and deduce that $h\in\dom(D)$ since $u\in\dom(D)$.
This shows that $\tilde f= 2\chi_+(\hat A D)h\in \dom(\hat A D)$, and thus $f\in \dom(T_A)$.
In particular $f\in\dom(|T_A|^{1/2})$.
Quadratic estimates for the operator $T_A$ now shows that
\begin{multline*}
  \iint_{\R^{1+n}_+} |\nabla_{t,x}U|^2\, dtdx =\int_0^\infty \|\pd_t F_t\|^2 dt =\int_0^\infty \| T_A F_t\|^2 dt \\
  = \int_0^\infty \| (t|T_A|)^{1/2} e^{-t|T_A|}( |T_A|^{1/2}f) \|^2\frac {dt}t\approx \| |T_A|^{1/2}f \|^2
  <\infty.
\end{multline*}
Thus $U\in \dot H^1(\R^{1+n}_+;\C^m)$, and (\ref{eq:lmneu}) follows as in the proof for the Neumann problem.
Finally, note that $\nabla_{t,x} U= -T_A F_t= -e^{-t|T_A|}(T_A f)$. This shows $L_2$ convergence
$$
  \nabla_x U_t \longrightarrow -(T_A f)_\ta  =\nabla_x f_0= \nabla_x u.
$$
As in the proof for the regularity problem, $U- P_t u\in \dot H^1_0(\R^{1+n}_+;\C^m)$ follows.
\end{proof}
\begin{rem}
(i)
  Note that for any $L_2$ boundary data, the solution to the Neumann and regularity problem
  always satisfies $\int_0^1 \|\nabla_{t,x}U\|^2 \, dt <\infty$, whereas the solution to the Dirichlet 
  problem always satisfies $\int_1^\infty \|\nabla_{t,x}U\|^2 \, dt <\infty$. 
  Thus the problem whether good boundary data give $\dot H^1$ solutions concerns large $t$
   for the Neumann and regularity problem, and small $t$ for the Dirichlet problem. 

(ii)
  The structure of the problem in Lemma~\ref{lem:unique} is best explained abstractly as follows.
  Let $\mH_1 \hookrightarrow\mH_0$ be a continuous and dense
  inclusion of Hilbert spaces. Assume that $T_0:\mH_0\rightarrow \mH_0$ is an isomorphism which restricts
  to a bounded operator $T_1:\mH_1\rightarrow \mH_1$.
  It follows from \cite[Theorem 11.1]{Mcirr} that we have regularity $\mH_1= T_0^{-1}(\mH_1)$
  if and only if $T_1$ is a Fredholm operator and has index zero.
  See also \cite[Proposition 3.2.16]{Axthesis}.
  
  In our situation, $T_0= I+K$, $\mH_0= N^\pm\mH$ and $\mH_1$ is either $\ran(D)\cap N^\pm\mH$
  or $\dom(D)\cap N^-\mH$.
  In principle, the technique of Lemma~\ref{lem:unique} could be used to prove regularity
  for more general $A$ in the component of WP containing $I$. 
  The problem though is that in general the well posedness of two different BVP's, for the matrix $A$, 
  is needed both for the proof that $T_0$ is an isomorphism and that $T_1$ is Fredholm. 
  Index zero for $T_1$ could then be proved by the method of continuity, perturbing $A$ to $I$.
\end{rem}
\begin{proof}[Proof of Theorem~\ref{thm:uniqueness}]
Fix good boundary data and let $0<a<b<\infty$.
For all $A$ with the assumed properties, let $F=F_A=\nabla_{t,x}U$ be the solutions given
by Theorem~\ref{thm:bvpfordivform}, and let $F^{0}= F^0_A=\nabla_{t,x}U$ denote the
standard $\dot H^1$ solutions constructed with the Lax--Milgram Theorem as in 
Definition~\ref{defn:wellposed}.

From the quadratic estimates for $T_A$ it follows with arguments as in \cite[proof of Theorem 1.1]{AAH} 
that $A\mapsto F_A\in L_2(\R^n\times (a,b);\C^{(1+n)m})$ is analytic on WP.
The main result this uses is the analyticity of $A\mapsto b(T_A)$ for operators $b(T_A)$ in
the functional calculus of $T_A$. This was proved  in \cite[Theorem 6.4]{AKMc}.
Moreover, it is straightforward to verify that 
$A\mapsto F^0_A\in L_2(\R^{1+n}_+;\C^{(1+n)m})$ is analytic.
This means that whenever $A_0\in WP$, $\C\supset D\in z\mapsto A(z)$ are coefficients
depending analytically on a complex variable $z$, and
$A(0)=A_0$ and $H\in L_2(\R^n\times (a,b);\C^{(1+n)m})$, then the scalar function
$z\mapsto h(A(z))$, where
$$
  h(A):= \iint_{(a,b)\times \R^n} (F_{A}- F^0_{A},H)\, dtdx,
$$
is analytic on $D$.

Consider one of the BVP's and fix $A$ in the connected component of WP containing $I$.
Pick a sequence of balls $B_k= B(A_k;r_k)\subset WP$, $k=0, 1,\ldots, N$, such that $A_0=I$,
$A_N= A$ and $B_{k-1}\cap B_k\not = \emptyset$.
We may take $r_0<\epsilon$, so that $h=0$ on $B_0$ by Lemma~\ref{lem:unique}.
Now assume that $h=0$ on $B_{k-1}$ and pick any $A^1\in B_k$.
Let $A^0\in B_{k-1}\cap B_k$ and let 
$A(z):= (1-z) A^0 + z A^1$.
Then $h(A(z))$ vanishes on a neighbourhood of $0$. 
By analytic continuation $h(A^1)= h(A(1))=0$, and since $A^1\in B_k$ was
arbitrary, $h=0$ on $B_k$. We arrive at the conclusion that $h(A)=0$.
Since $a,b$ and $H$ are arbitrary, it follows
that $F_A=F_A^0$.
\end{proof}

\section{Boundary value problems for differential forms}   \label{sec:forms}

In this section, we demonstrate how Theorem~\ref{thm:bvpfordivform} and 
Theorem~\ref{thm:mainhardy} 
generalize to exterior/interior differential systems for 
$k$-vector fields, i.e.~differential forms of order $k$.

We use the notation from \cite[Section 2.1]{AAH}.
In particular, for fixed $k\in \{1,2,\ldots,n\}$, we consider functions
$$
   F(t,x) = \sum F_{\{s_1,\ldots, s_k\}}(t,x) \,\,e_{s_1}\wedg \ldots\wedg e_{s_k},
$$
taking values in the space $\wedge^k=\wedge^k\R^{1+n}$ of complex $k$-vectors on $\R^{1+n}$.
The vector fields in (\ref{eq:Laplacein1order}) is the special case $k=1$.
We point out that we assume the component functions $F_s$ to be scalar valued
here (m=1), although the methods apply, mutatis mutandis, to systems of exterior differential systems.
A natural generalization of the first order system (\ref{eq:Laplacein1order}) is the interior/exterior
differential system
\begin{equation} \label{eq:diracwedgek}
\begin{cases}   
  d^*_{t,x} B F(t,x)  =0, \\
  d_{t,x} F(t,x) =0,
\end{cases}
\end{equation}
where $F: \R^{1+n}_+\rightarrow \wedge^k$.
Here the exterior and interior derivative operators are
\begin{align*}
   d_{t,x}F  =\nabla_{t,x} \wedg F&= \sum_{j=0}^{n} e_j\wedg \pd_j F= \mu \pd_t F+ d_xF, \\
   d_{t,x}^*F  =-\nabla_{t,x} \lctr F&=- \sum_{j=0}^{n} e_j\lctr \pd_j F= -\mu^* \pd_t F+ d_x^*F, 
\end{align*}
where $\wedg$ denotes exterior product and $\lctr$ denotes (left) interior product, and
$\mu f= e_0\wedg f$ and $\mu^* f= e_0\lctr f$.
The matrix function $B\in L_\infty(\R^n;\mL(\wedge^k))$ is assumed to be $t$-independent and
{\em pointwise strictly accretive} in the sense that
$$  
  \re(B(x)w,w)\ge \kappa |w|^2,\qquad \text{for all } w\in \wedge^k \text{ and a.e. } x\in\R^n.
$$

To prove an analogue of Theorem~\ref{thm:mainhardy} for the Equation
(\ref{eq:diracwedgek}), we proceed as in Section~\ref{sec:ahat}
and introduce auxiliary matrices 
$$
  \oB:=
  \begin{bmatrix}
     B_{\no\no} & B_{\no\ta} \\ 0 & I
  \end{bmatrix}, \quad
  \uB:=
  \begin{bmatrix}
     I & 0 \\ B_{\ta \no} & B_{\ta\ta} 
  \end{bmatrix},
  \qquad\text{if }
  B=
  \begin{bmatrix}
     B_{\no\no} & B_{\no\ta} \\ B_{\ta \no} & B_{\ta\ta} 
  \end{bmatrix}
$$
in the normal/tangential splitting of $\wedge^k$.
Recall that a basis $k$-vector $e_{s_1}\wedg\ldots\wedg e_{s_k}$ 
is normal if one of the factors is $e_0$, and tangential otherwise.
Denote the tangential and normal parts of $f$ by $f_\ta$ and $f_\no$.
Splitting each of the Equations (\ref{eq:diracwedgek}) into normal and 
tangential parts and using the analogue of (\ref{eq:uoa}), shows that (\ref{eq:diracwedgek})
is equivalent to
$$
  \begin{cases}
    \pd_t (\oB F)_\no - \mu d_x^*(\uB F)_\ta = 0, \\
    \pd_t (\oB F)_\ta + \mu^* d_x(\uB F)_\no=0,
  \end{cases}
$$
together with the constraints $d_x F_\ta=0=d_x^*(BF)_\no$.
These tangential derivatives in the equations define the appropriate function
space
$$
  \mH^k_B:= \sett{f\in L_2(\R^n;\wedge^k)}{d_x f_\ta=0=d_x^*(Bf)_\no}
$$
generalizing $\mH$ from Definition~\ref{defn:Hspace}.
Note that when $k\ge 2$, the space $\mH^k_B$ depends on $B$, unlike the case $k=1$.

The normal derivatives in the equation give an equation $\pd_t F+ T_B F=0$, where
the infinitesimal generator is 
\begin{equation}    \label{eq:thatb}
    T_B:= \oB^{-1}D\uB.
\end{equation}
Here $D:=\mu^* d_x -\mu d_x^*$ is a self-adjoint differential operator.
The operator $T_B$ has similarities
\begin{equation*} 
  \oB^{-1}(D\hat B)\oB =T_B=  \uB^{-1}(\hat B D)\uB,
\end{equation*}
where $\hat B:= \uB \oB^{-1}$ is shown to be pointwise strictly accretive as in 
Proposition~\ref{prop:ahataccr}.
Thus Proposition~\ref{prop:typeomega} applies and proves that 
$T_B= \oB^{-1} (D \hat B)\oB$ is an $\omega$-bisectorial operator, $\omega$
being the angle of accretivity of $\hat B$.
Moreover, $T_B$ restricts to an injective $\omega$-bisectorial operator in 
$\mH^k_B=\oB^{-1}\clos{\ran(D)}$ with dense range, and in the splitting
$$
  L_2(\R^n;\wedge^k) = \clos{\ran(T_B)}\oplus \nul(T_B)= \mH^k_B \oplus
  \sett{[f_\no, f_\ta]^t}{d_x f_\no=0= d_x^*(Bf)_\ta},
$$
we have $T_B= T_B|_{\mH^k_B}\oplus 0$.

Similar to the proof of Theorem~\ref{thm:mainhardy}, Theorem~\ref{thm:qestsfordb} proves the 
boundedness of the Cauchy operator
$$
   E_B := \sgn(T_B)
$$
and the Hardy projections $E^\pm_B:= \chi_\pm(T_B)$.
To handle perturbation theory for the variable space $\mH_B^k$, we extend
this operator to $L_2(\R^n;\wedge^k)$ by defining $E_B^\pm f= E_B f=0$ when
$f\in\nul(T_B)$.

We obtain the
following result on Hardy space splittings of  $\mH^k_B\subset L_2(\R^n;\wedge^k)$.
\begin{thm}     \label{thm:hardyforforms}
Let $B \in L_\infty(\R^n;\mL(\wedge^k))$ be a $t$-independent, complex 
coefficient matrix function which is pointwise strictly accretive.

Then each $f\in \mH^k_B$ 
is in one-to-one correspondence with a pair of $k$-vector
fields $F^\pm_t=F^\pm(t,\cdot)\in C^1(\R_\pm; L_2(\R^n;\wedge^k))$ in $\R^{1+n}_\pm$
satisfying (\ref{eq:diracwedgek}) and having $L_2$ limits 
$\lim_{t\rightarrow 0^\pm}F^\pm_t=f^\pm$ and $\lim_{t\rightarrow \pm\infty}F^\pm_t=0$,
such that  
$$
f=f^++f^-.
$$
Under this correspondence, we have equivalences of norms $\|f\|_2\approx \|f^+\|_2 +\|f^-\|_2$ and
\begin{equation}   \label{eq:eqnormsforforms}
  \|f^\pm\|_2\approx \sup_{\pm t>0} \|F^\pm_t\|_2
  \approx \tb{t\pd_t F^\pm_{\pm t}}.
\end{equation}
Moreover, the Hardy space projections $L_2(\R^n;\wedge^k)\ni f\mapsto F^\pm=F^\pm_B:= e^{\mp t| T_B |}E_B^\pm f$
depend locally Lipschitz continuously on $B$ in the sense that
$$
  \|F^\pm_{B_2}-F^\pm_{B_1}\|_{\mX}\le C \|B_2-B_1\|_{L_\infty(\R^n)} \|f\|_2,
$$
where $C=C(\kappa_{B_1}, \kappa_{B_2}, \|B_1\|_\infty, \|B_2\|_\infty)$ and where
$\|F\|_\mX$ denotes any of the three norms in (\ref{eq:eqnormsforforms}).
\end{thm}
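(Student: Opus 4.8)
The plan is to run the proof of Theorem~\ref{thm:mainhardy} with essentially no change, the only genuinely new feature being that for $k\ge2$ the space $\mH^k_B$ moves with $B$, so the perturbation part must be organised on the fixed ambient Hilbert space $L_2(\R^n;\wedge^k)$ rather than on $\mH^k_B$. First I would record what has already been assembled before the statement: writing $T_B=\oB^{-1}(D\hat B)\oB$ with $D=\mu^*d_x-\mu d_x^*$ self-adjoint and $\hat B=\uB\oB^{-1}$ pointwise strictly accretive (as in Proposition~\ref{prop:ahataccr}), Proposition~\ref{prop:typeomega} gives that the restriction of $T_B$ to $\mH^k_B=\oB^{-1}\clos{\ran(D)}$ is injective, $\omega$-bisectorial with dense range, and Theorem~\ref{thm:qestsfordb} (applied to $D\hat B$, with $D$ satisfying $\|Df\|\gtrsim\|\nabla f\|$ on $\clos{\ran(D)}\cap\dom(D)$) gives quadratic estimates and a bounded holomorphic functional calculus there. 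In particular $E_B^\pm=\chi_\pm(T_B)$ are bounded complementary projections with $\mH^k_B=E_B^+\mH^k_B\oplus E_B^-\mH^k_B$, and $\{e^{\mp t|T_B|}\}_{t>0}$ is uniformly bounded on $\mH^k_B$.

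Next I would establish the one-to-one correspondence exactly as in the proof of Theorem~\ref{thm:mainhardy}. For $f\in\mH^k_B$, put $f^\pm:=E_B^\pm f$ and $F^\pm_t:=e^{\mp t|T_B|}f^\pm$ for $\pm t>0$; differentiating and using $|z|=z\,\sgn(z)$ on the support of $\chi_\pm$ shows $\pd_tF^\pm=-T_BF^\pm$, which, as noted just before the theorem, is equivalent to (\ref{eq:diracwedgek}) on $\R^{1+n}_\pm$, while $\lim_{t\to0^\pm}F^\pm_t=f^\pm$ and $\lim_{t\to\pm\infty}F^\pm_t=0$ are clear. Conversely, if $F\in C^1(\R_\pm;L_2(\R^n;\wedge^k))$ solves (\ref{eq:diracwedgek}) with $\lim_{t\to\pm\infty}F_t=0$ and $L_2$-trace $f$, then $\pd_tF_t=-T_BF_t$; splitting $F_t=F_t^++F_t^-$ with $F_t^\pm\in E_B^\pm\mH^k_B$, applying $E_B^\pm$ to get $\pd_tF_t^\pm\pm|T_B|F_t^\pm=0$, and multiplying by the appropriate exponentials forces $F$ to have the stated form. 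Writing $f=f^++f^-$ then gives the bijection. The norm equivalences follow at once from the functional calculus: $\|f\|_2\approx\|f^+\|_2+\|f^-\|_2$ by boundedness of the complementary projections; $\|f^\pm\|_2\approx\sup_{\pm t>0}\|F^\pm_t\|_2$ by the uniform semigroup bound; and $\tb{t\pd_tF^\pm_{\pm t}}\approx\|f^\pm\|_2$ by the quadratic estimate, since $t\pd_tF^\pm_{\pm t}=\mp\psi(tT_B)f^\pm$ with $\psi(z)=ze^{-|z|}\chi_\pm(z)$ admissible in (\ref{eq:genqest}).

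The hard part will be the Lipschitz dependence on $B$, precisely because the space $\mH^k_B$ is not fixed. I would handle this by working throughout with the functional calculus operators $b(T_B)$ on all of $L_2(\R^n;\wedge^k)$ — thus $E_B^\pm$ and $F_B^\pm=e^{\mp t|T_B|}E_B^\pm$ in their extended form, equal to zero on $\nul(T_B)$ as introduced before the theorem (automatic for symbols vanishing at $0$, and the convention $\chi_\pm(0)=0$ otherwise). Following \cite[Theorem 6.4]{AKMc} and \cite[Lemma 2.41]{AAH}, I would prove that the maps
\begin{align*}
  f&\longmapsto b(T_B)f :\ L_2(\R^n;\wedge^k)\to L_2(\R^n;\wedge^k),\\
  f&\longmapsto (b(tT_B)f)_{t>0} :\ L_2(\R^n;\wedge^k)\to L_2((a,b);L_2(\R^n;\wedge^k)),\\
  f&\longmapsto (\psi(tT_B)f)_{t>0} :\ L_2(\R^n;\wedge^k)\to L_2(\R_+,dt/t;L_2(\R^n;\wedge^k))
\end{align*}
depend analytically on $B$, for $b$ bounded holomorphic on a double sector $S_\nu^o$ ($\omega<\nu<\pi/2$), $\psi$ decaying at $0$ and $\infty$ as in (\ref{eq:genqest}), and $(a,b)$ finite. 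The inputs are: analyticity of $B\mapsto(\lambda I-T_B)^{-1}$ on $L_2(\R^n;\wedge^k)$, which follows from $T_B=\oB^{-1}D\uB$, analyticity of $B\mapsto\oB^{-1}$ and $B\mapsto\uB$, and the resolvent bounds of Proposition~\ref{prop:typeomega} holding uniformly on $L_\infty$-neighbourhoods of an accretive $B$; the uniform quadratic estimates of Theorem~\ref{thm:qestsfordb}; and the stability of analyticity under uniform limits.

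Finally I would deduce Lipschitz continuity by the disc argument of the proof of Theorem~\ref{thm:mainhardy}: given accretive $B_1,B_2$, set $B(\zeta):=B_1+\zeta(B_2-B_1)/\|B_2-B_1\|_\infty$, analytic in a neighbourhood $\Omega$ of $[0,\|B_2-B_1\|_\infty]$ whose size depends only on $\kappa_{B_1},\kappa_{B_2},\|B_1\|_\infty,\|B_2\|_\infty$. Then $\zeta\mapsto F^\pm_{B(\zeta)}$ is analytic into each of the three norm spaces in (\ref{eq:eqnormsforforms}) — via the first map with $b=\chi_\pm$ for the trace norm $\|f^\pm\|_2$, via $\zeta\mapsto b(tT_{B(\zeta)})f$ with $b(z)=e^{-t|z|}\chi_\pm(z)$ (pointwise in $t$) for $\sup_{\pm t>0}\|F^\pm_t\|_2$, and via the third map with $\psi(z)=ze^{-|z|}\chi_\pm(z)$ for the square function norm — and is uniformly bounded by a constant times $\|f\|_2$ on $\Omega$; hence $\|dF^\pm_{B(\zeta)}/d\zeta\|_\mX\lesssim\|f\|_2$ there, and integrating over $[0,\|B_2-B_1\|_\infty]$ gives $\|F^\pm_{B_2}-F^\pm_{B_1}\|_\mX\le C\|B_2-B_1\|_{L_\infty(\R^n)}\|f\|_2$. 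Apart from this perturbation step, everything is a direct transcription of the arguments already carried out for $k=1$.
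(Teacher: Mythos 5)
Your proposal is correct and follows essentially the same approach the paper intends: the paper gives only the setup (the similarity $T_B=\oB^{-1}(D\hat B)\oB$, Proposition~\ref{prop:typeomega}, Theorem~\ref{thm:qestsfordb}, and the extension of $E_B^\pm$ by zero on $\nul(T_B)$) and then invokes the proof of Theorem~\ref{thm:mainhardy} ``mutatis mutandis.'' You correctly identify the one genuine novelty for $k\ge 2$ --- that $\mH^k_B$ moves with $B$, forcing the perturbation argument onto the fixed ambient space $L_2(\R^n;\wedge^k)$ via the extension-by-zero convention --- which is precisely the point the paper flags just before stating the theorem.
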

We remark that the proof of the non-tangential maximal estimate $\|\wt N_*(F)\|_2\approx \|f\|_2$
from \cite[Proposition 2.56]{AAH} in Theorem~\ref{thm:mainhardy} uses the divergence form 
structure of the second order system. This technique does not generalize to more 
general exterior differential systems.

Finally we extend the results in Section~\ref{sec:divbvp}  and show how 
Theorem~\ref{thm:hardyforforms} gives perturbation results for BVP's
for $k$-vector fields.
The natural BVP's are the following.
We are looking for a $k$-vector field $F_t\in C^1(\R_+;L_2(\R^n;\wedge^k))$ solving
(\ref{eq:diracwedgek}) in $\R^{1+n}_+$ with $L_2$ limits 
$\lim_{t\rightarrow 0^+}F_t=f$ and $\lim_{t\rightarrow \infty}F_t=0$,
where the boundary trace $f$ satisfies one of the following.
\begin{itemize}
\item Tangential boundary condition (Tan-$B$): $f_\ta = g$, 
where the given boundary data $g\in L_2(\R^n;\wedge^k)$ is tangential and satisfies
$d_x g=0$.
\item
Conormal boundary condition (Nor-$B$): $(Bf)_\no = g$,
where the given boundary data $g\in L_2(\R^n;\wedge^k)$ is normal and satisfies
$d_x^* g=0$.
\end{itemize}
Note that when $k=1$, (Tan-$B$) coincides with the Dirichlet regularity problem (Reg-$B$)
and (Nor-$B$) coincides with the Neumann problem (Neu-$B$).
\begin{thm}   \label{thm:bvpforkforms}
  The sets of well posedness $WP(Tan)$ and $WP(Nor)$
  are both open subsets of $L_\infty(\R^n; \mL(\wedge^k))$ and
  each contains
\begin{itemize}
\item[{\rm (i)}] all Hermitean matrices $B(x)= B(x)^*$ (and in particular all real symmetric matrices),
\item[{\rm (ii)}] all block matrices $B(x)= \begin{bmatrix} B_{\no\no}(x) & 0 \\ 0 & B_{\ta\ta}(x)\end{bmatrix}$, and
\item[{\rm (iii)}] all constant matrices $B(x)=B$.
\end{itemize}
\end{thm}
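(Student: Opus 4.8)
The plan is to transcribe the proof of Theorem~\ref{thm:bvpfordivform} from Section~\ref{sec:divbvp}, reading $B$, $\mH^k_B$ and $D=\mu^* d_x-\mu d_x^*$ in place of $A$, $\mH$ and $\begin{bmatrix}0&\divv_x\\-\nabla_x&0\end{bmatrix}$. First I would record the translation of well posedness: with the Hardy projections $E^\pm_B=\chi_\pm(T_B)$ extended to all of $L_2(\R^n;\wedge^k)$ by $0$ on $\nul(T_B)$ — so that $E^\pm_B$ are bounded idempotents on $L_2$ with $\ran(E^+_B)=E^+_B\mH^k_B$ — the problem (Tan-$B$) is well posed exactly when $f\mapsto f_\ta$ carries $E^+_B\mH^k_B$ isomorphically onto the space of tangential $g\in L_2(\R^n;\wedge^k)$ with $d_xg=0$, and (Nor-$B$) exactly when $f\mapsto(Bf)_\no$ carries $E^+_B\mH^k_B$ isomorphically onto the space of normal $g\in L_2(\R^n;\wedge^k)$ with $d_x^*g=0$. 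Since these target spaces do not depend on $B$, whereas the subspaces $E^+_B\mH^k_B=\ran(E^+_B)$ vary continuously with $B$ — the one structural difference from the case $k=1$, where $\mH^k_B$ was itself $B$-independent — openness of $WP(Tan)$ and $WP(Nor)$ follows from Theorem~\ref{thm:hardyforforms} and Lemma~\ref{lem:vardomains}, absorbing the $O(\|B_2-B_1\|_\infty)$ variation of $f\mapsto(Bf)_\no$ into the perturbation exactly as for $f\mapsto(Af)_0$ in the proof of Theorem~\ref{thm:bvpfordivform}.

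For the three inclusions I would rerun the three subsections of Section~\ref{sec:divbvp} in turn. For Hermitean $B$ the Rellich computation of Section~\ref{sec:hermitean} needs only two algebraic inputs, both of which survive: the anticommutation $DN+ND=0$, which holds because $N$ anticommutes with $\mu=e_0\wedg\cdot$ and $\mu^*=e_0\lctr\cdot$ and commutes with $d_x$ and $d_x^*$; and the identity $\hat B^*=N\hat B N$ when $B^*=B$, read off the formula for $\hat B$ just as for $\hat A$. Running the computation with $\oA,\uA$ replaced by $\oB,\uB$ produces, for $f\in E^+_B\mH^k_B$,
\[
  ((f)_\no,(Bf)_\no)=((Bf)_\ta,(f)_\ta)
\]
(using the analogue of (\ref{eq:uoa})), hence $\re(Bf,f)=2\re((Bf)_\no,f_\no)=2\re((Bf)_\ta,f_\ta)$, which together with accretivity gives $\|f\|_2\lesssim\|(Bf)_\no\|_2$ and $\|f\|_2\lesssim\|f_\ta\|_2$; both boundary maps are therefore injective with closed range, and deforming $B_\tau:=(1-\tau)I+\tau B$ (still Hermitean and accretive for $0\le\tau\le1$) the method of continuity, together with the proof of Lemma~\ref{lem:vardomains}, shows they have the index of the corresponding maps at $B_0=I$, which are invertible because $I$ is a block matrix. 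For block $B$ the operator $T_B$ is off-diagonal in the normal/tangential splitting, hence so is $E_B=\sgn(T_B)$, whence $E_BN+NE_B=0$; as in Section~\ref{sec:block} this yields the explicit two-sided inverses $2E^+_B\colon N^\pm\mH^k_B\to E^+_B\mH^k_B$ — here one also uses that $N$ preserves $\mH^k_B$ when $B$ is block — and for (Nor-$B$) one then composes with the isomorphism $B_{\no\no}$ on the normal part.

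For constant $B$ I would pass to the Fourier transform, so that on each fibre $\xi\ne0$ everything becomes finite dimensional, and in place of the eigenvector calculation used for $k=1$ run the reverse Rellich estimate used for systems. The one point that must be checked rather than copied is that, although $D^2=N^+d_xd_x^*+N^-d_x^*d_x$ is a Hodge--Laplacian type operator and is \emph{not} $|\xi|^2I$ on all of $\wedge^k$, its symbol still satisfies $D_\xi^2=|\xi|^2I$ on $\ran(D_\xi)$ — a short computation with the anticommutation relations among $\mu$, $\mu^*$, $\xi\wedg\cdot$ and $\xi\lctr\cdot$ and the orthogonality $\langle e_0,\xi\rangle=0$. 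Granting this, one normalises $|\xi|=1$ (legitimate since $\chi_+(tT_\xi)=\chi_+(T_\xi)$ for $t>0$, with $T_\xi:=\oB^{-1}D_\xi\uB$) and, for $f\in E^+_B\mH^k_{B,\xi}$ with $f_\ta=0$ or $(Bf)_\no=0$ and $F_t=e^{-t|T_\xi|}f$, uses
\[
  (D_\xi\oB f,\oB f)=-\int_0^\infty\pd_t(D_\xi\oB F_t,\oB F_t)\,dt=2\int_0^\infty\re(BF_t,F_t)\,dt ;
\]
the left-hand side vanishes because $D_\xi$ interchanges normal and tangential parts while $\oB f$ has vanishing tangential part $f_\ta$, respectively vanishing normal part $(Bf)_\no$, so accretivity forces $F_t\equiv0$ and $f=0$. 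Uniformity in $\xi$ over the unit sphere, plus the method of continuity back to $B=I$, then upgrade this to surjectivity of both maps. One could instead first establish the analogue of Proposition~\ref{prop:regneuduality}, that (Nor-$B$) is well posed if and only if (Tan-$B^*$) is, and treat only one of the two maps; but since the reverse Rellich estimate disposes of $f_\ta=0$ and $(Bf)_\no=0$ at one stroke, this seems unnecessary.

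I expect the only real obstacle to be this last point — not the logical structure of the constant-coefficient case, which copies the systems argument verbatim, but the algebraic identity that makes the reverse Rellich computation close, namely that $D_\xi^2$ restricts to the scalar $|\xi|^2$ on $\ran(D_\xi)$. This is a genuine feature of the Dirac-type operator $D=\mu^* d_x-\mu d_x^*$ rather than of $D$ being an arbitrary first order elliptic system, so the $k=1$ picture — where $D_\xi^2=|\xi|^2I$ holds already on the whole two-dimensional fibre — can mislead; everything else is bookkeeping on top of Theorem~\ref{thm:hardyforforms}.
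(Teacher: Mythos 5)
Your proposal is correct and follows essentially the same route as the paper: reducing well posedness to the boundary restriction maps on $E^+_B\mH^k_B$ being isomorphisms, invoking Theorem~\ref{thm:hardyforforms} with Lemma~\ref{lem:vardomains} for openness (correctly flagging the new $B$-dependence of $\mH^k_B$ and the extension of $E^\pm_B$ by zero), and rerunning the Rellich, off-diagonal, and reverse-Rellich arguments of Section~\ref{sec:divbvp} in turn. You also correctly isolate the one genuinely new algebraic ingredient — that $D_\xi^2=|\xi|^2$ holds only on $\ran(D_\xi)$ and must be verified from the anticommutation relations, which is exactly what the paper's computation (\ref{eq:dxisquare}) supplies.
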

What is new here as compared with \cite{AAH}, is the perturbation result
around Hermitean and constant matrices, as well as the openness of the sets of well posedness.
The proof of Theorem~\ref{thm:bvpforkforms} is similar to the proof Theorem~\ref{thm:bvpfordivform}.
We observe that (Tan-$B$) and (Nor-$B$) are
well posed if and only if
\begin{align*}
  E^+_B\mH^k_B \longrightarrow \sett{g\in L_2(\R^n;\wedge^k)}{\mu^* g=0, \, d_x g=0}&:
  f\longmapsto f_\ta, \\
  E^+_B\mH^k_B \longrightarrow \sett{g\in L_2(\R^n;\wedge^k)}{\mu g=0, \, d^*_x g=0}&:
  f\longmapsto (Bf)_\no,
\end{align*}
are isomorphisms respectively.
Theorem~\ref{thm:hardyforforms} shows that $E_B$ depends continuously on $B$, so
we obtain from Lemma~\ref{lem:vardomains} that WP(Tan) and WP(Nor) are open sets.

That Hermitean and block matrices belong to WP is shown as in Sections~\ref{sec:hermitean} and
\ref{sec:block}, mutatis mutandis.
For constant matrices, the reverse Rellich argument used for second order divergence form elliptic systems $m>1$ 
applies.
For exterior differential systems, the symbol of the operator is
$$
  T_{\xi}:= \oB^{-1} D_\xi  \uB,
  \qquad 
  D_\xi:= 
  i(\mu^* \mu_\xi + \mu \mu_\xi^*),
$$
acting in the $2\binom{n-1}{k-1}$-dimensional space 
$\mH^k_\xi := \sett{f\in \wedge^k}{\mu\mu_\xi f= 0 = \mu^*\mu^*_\xi B f}$.
The proof uses that if $\mu\mu_\xi f= 0=\mu^*\mu_\xi^* f$, i.e. $f\in \ran(D_\xi)$, then
\begin{multline}   \label{eq:dxisquare}
  D_\xi^2 f= -( \mu^* \mu_\xi \mu \mu_\xi^*+\mu \mu_\xi^*   \mu^* \mu_\xi)f
  = ( \mu_\xi \mu^* \mu \mu_\xi^*+\mu _\xi^*\mu  \mu^* \mu_\xi)f \\
  = ( \mu_\xi (I-  \mu \mu^*) \mu_\xi^*+\mu _\xi^*(I- \mu^*  \mu) \mu_\xi)f
  =  ( \mu_\xi \mu_\xi^*+\mu _\xi^* \mu_\xi)f = |\xi|^2 f.
\end{multline}
For the anticommutation relations, we refer to \cite[Lemma 2.3]{AAH}.

\bibliographystyle{acm}

\end{document}